\documentclass[a4paper,12pt,oneside]{article}

\usepackage[latin1]{inputenc}
\usepackage[english]{babel}
\usepackage{amsthm}
\usepackage{amssymb}
\usepackage{amsmath}
\usepackage{hyperref}
\usepackage{mathrsfs}

\usepackage{pstricks}
\usepackage[nottoc]{tocbibind}

 \usepackage{tikz}

\usepackage{graphicx}
\graphicspath{{images/}}

\usepackage{longtable,geometry}
\usepackage{color}

\usepackage{hyperref}

\geometry{a4paper, margin=1.in}
\newtheoremstyle{note}{12pt}{12pt}{}{}{\bfseries}{.}{.5em}{}
\title{\LARGE\textbf{Unbounded Regime for Circle Maps with a Flat Interval}}
\author{Liviana Palmisano\\ Institute of Mathematics of PAN\\
Warsaw, Poland}

\newtheorem{theorem}[equation]{Theorem}
\newtheorem{proposition}{Proposition}

\newtheorem{definition}[equation]{Definition}

\newtheorem{remark}[equation]{Remark}
\newtheorem{corollary}[equation]{Corollary}
\newtheorem{fact}[equation]{Fact}
\numberwithin{equation}{section}
\newtheorem{lemma}[equation]{Lemma}

\newcommand{\N}{{\mathbb N}}

\newcommand{\R}{{\mathbb R}}

\newcommand{\T}{{\mathbb T}^2}
\renewcommand{\S}{{\mathbb S}^1}

\newcommand{\Cinf}{{{\mathcal C}^\infty}}
\newcommand{\Cd}{{{\mathcal C}^2}}

\newcommand{\C}{{\mathcal C}}

\renewcommand{\L}{{\mathscr L}}

\newcommand{\Cr}{\operatorname{\mathbf{Cr}}}
\newcommand{\Poin}{\operatorname{\mathbf{Poin}}}

\newcommand{\dist}{\operatorname{dist}}
\newcommand{\Sing}{\operatorname{Sing}}

\usepackage{makeidx}

\usepackage{hyperref}

\begin{document}
\maketitle
\author

\begin{abstract}
We study $\Cd$ weakly order preserving circle maps with a flat interval. In particular we are interested in the geometry of the mapping near to the singularities at the boundary of the flat interval. Without any assumption on the rotation number we show that the geometry is degenerate when the degree of the singularities is less than or equal to two and becomes bounded when the degree goes to three. As an example of application, the result is applied to study Cherry flows.
\end{abstract}

\section{Introduction}
\subsection{Motivation}
The principal purpose of this paper is to study the dynamics of a class $\L$ of circle maps of degree one, supposed to be $\C^2$ everywhere with the exception of two points where they are continuous and such that they are constant on one of the two intervals delimited by these two points. Moreover on a half open neighborhood of these two points the maps can be written as $x^\ell$ where the real positive number $\ell$ is called the critical exponent of the function.
\par
The study of this kind of map has a long history (see \cite{veer}, \cite{MvSdMM}, \cite{VT1}, \cite{VT2}, \cite{mendes}, \cite{Morgan}, \cite{5aut}, \cite{my}). One of the reasons for their investigation is connected to the understanding of particular flows on the two-dimensional torus, called Cherry flows. In fact the first return map for a Cherry flow is a function belonging to the class $\L$ (for more details see \cite{MvSdMM}, \cite{ABZ}, \cite{my}, \cite{my4}). The first example of such a flow was given by Cherry in 1937 and still a lot of questions about metric, ergodic and topological properties of Cherry flows remain open.
\par
Moreover, this kind of functions also arise naturally in the theory of circle mappings themselves as upper or lower maps of degree $1$ transformations which are not homeomorphisms.
A rapid development of the theory of mappings with a flat interval occurred in the decade between $1985$ and $1995$  with the introduction of analytic tools based on cross-ratio distortion. Afterwards, a lull occurred due to a lack of new motivating questions. {This paper is a part of the recent reawakening of interest in this class of mappings, principally motivated by a deeper understanding of their connections with Cherry flows. This line of research has also been pursued recently for example in \cite{my} and \cite{SV}.}

\par

 In this paper we are interested in the study of the geometry of functions in $\L$  near to the boundary points of the flat interval. Without any assumption on the rotation number we discover a change of the geometry depending on the degree of the singularities at the boundary points of the flat interval. Expressly, we show that the geometry is bounded when the critical exponents of our maps become greater than $3$.
\par
This result makes a contribution to the theory of circle maps with a flat interval and  it is particularly interesting because it opens the way towards understanding metric and ergodic properties of Cherry flows.
\par
Before we can explain our results more precisely, it is necessary to define our class and fix some notation.
\subsection{Assumptions and Notations}\label{class}
\paragraph{Hypotheses.}
\begin{enumerate}
\item We consider continuous circle endomorphisms $f$ of degree one, at least twice continuously differentiable except for two points (endpoints of the flat interval).
\item The first derivative of $f$ is everywhere positive except for the closure of an open non-degenerate interval $U$ (the flat interval) on which it is equal to zero.
\item Let $\left(a,b\right)$ be a preimage of $U$ under the natural projection of the real line on $\S$.
On some right-sided neighborhood of $b$, $f$ can be represented as
\begin{displaymath}
h_{r}\left(\left(x-b\right)^{\ell}\right),
\end{displaymath}
where $h_{r}$ is a $C^{2}$-diffeomorphism on an open neighborhood of $b$.
Analogously, there exists a $C^{2}$-diffeomorphism on a left-sided neighborhood of $a$ such that $f$ is of the form
\begin{displaymath}
h_{l}\left(\left(a-x\right)^{\ell}\right).
\end{displaymath}
The real positive number $\ell$ is called the critical exponent of $f$.
\end{enumerate}
\par
In the future we will assume that $h_r(x)=h_l(x)=x$. It is in fact possible to make $\Cd$ coordinate changes near $a$ and $b$ that will allow us to replace both $h_r$ and $h_l$ with the identity function.
\par
The class of such maps will be denoted by $\L$.

\paragraph{Basic Notations.}
We will introduce a simplified notation for backward and forward images of the flat interval $U$. Instead of $f^{i}(U)$ we will simply write $\underline{i}$; for example, $\underline{0}=U$. Thus, for us, underlined positive integers represent points, and underlined non-positive integers represent intervals.
\paragraph {Distance between Points.}
We denote by $(a,b)=(b,a)$ the  shortest  open interval between $a$ and $b$ regardless of the order of these two points. The length of that interval in the natural metric on the circle will be denoted by $|a-b|$. Following \cite{5aut}, let us adopt these notational conventions:
\begin{itemize}
\item $|\underline{-i}|$ stands for the length of the interval $\underline{-i}$.
\item Consider a point $x$ and an interval $\underline{-i}$ not containing it. Then the distance from $x$ to the closer endpoint of $\underline{-i}$ will be denoted by $|(x,\underline{-i})|$, and the distance to the more distant endpoint by $|(x,\underline{-i}]|$.
\item We define the distance between the endpoints of two intervals $\underline{-i}$ and $\underline{-j}$ analogously. For example, $|(\underline{-i},\underline{-j})|$ denotes the distance between the closest endpoints of these two intervals while $|[\underline{-i},\underline{-j})|$ stands for $|\underline{-i}|+|(\underline{-i},\underline{-j})|$.
\end{itemize}
\paragraph{Rotation Number.}
As the maps we consider are continuous and weakly order preserving, they have a rotation number; this number is the quantity which measures the rate at which an orbit winds around the circle. More precisely, if $f$ is a map in $\L$ and $F$ is a lifting of $f$ to the real line, the rotation number of $f$ is the limit
\begin{displaymath}
\rho(f)=\lim_{n\to\infty}\frac{F^n(x)}{n} \textrm{ (mod $1$)}.
\end{displaymath}
This limit exists for every $x$ and its value is independent of $x$. Because the dynamics is more interesting, in the discussion that follows and for the rest of this paper we will assume that the rotation number is irrational. Also, it will often be convenient to identify $f$ with a lift $F$ and subsets of $\mathbb S^1$ with the corresponding subsets of $\mathbb R$.

\paragraph{Combinatorics.}

Let $f\in\L$ and let $\rho(f)$ be the rotation number of $f$. Then, $\rho(f)$ can be written as an infinite continued fraction
\begin{displaymath}
\rho(f)=\frac{1}{a_1+\frac{1}{a_2+\frac{1}{\cdots}}},
\end{displaymath}
where $a_i$ are positive integers.

If we cut off the portion of the continued fraction beyond the $n$-th position, and write the resulting fraction in lowest terms as $\frac{p_n}{q_n}$ then the numbers $q_n$ for $n\geq 1$ satisfy the recurrence relation
\begin{equation}\label{q_n}
q_{n+1} = a_{n+1}q_n+q_{n-1};\textrm{  } q_0 = 1;\textrm{  } q_1 = a_1.
\end{equation}

The number $q_n$ is the number of times we have to iterate the rotation by $\rho(f)$ in order that the orbit of any point makes its closest return so far to the point itself (see Chapter I, Sect. I in \cite{deMvS}).

\subsection{Discussion and Statement of the Results}
As stressed before, in this paper we are interested in the study of the geometry of functions in $\L$  near to the boundary points of the flat interval. This quantity is measured by the sequence of scalings \begin{displaymath}
\tau_{n}:=\frac{|(\underline 0,\underline{q_n})|}{|(\underline 0,\underline{q_ {n-2}})|}.
\end{displaymath}
When $\tau_n\rightarrow 0$ we say that the geometry of the mapping is `degenerate'.
When $\tau_n$ is bounded away from zero we say that the geometry is `bounded'.
\par
The same problem was analyzed in \cite{5aut} for functions in $\L$ with rotation number of bounded type  ($\sup_{i}a_i<\infty$) and with negative Schwarzian derivative\footnote{The Schwarzian derivative of a function $f$ is defined to be $Sf(x):=\frac{f^{'''}(x)}{f^{'}(x)}-\frac{3}{2}\left(\frac{f^{''}(x)}{f^{'}(x)}\right)^2$.}. This last assumption was then removed in \cite{my}.  In these papers, it is proved that the geometry is degenerate when the critical exponent is less than or equal to $2$ and becomes bounded when the critical exponent passes $2$. So, a phase transition occurs in the dynamics of the system depending on the degree of the singularities at the boundary points of the flat interval. 
\par
This result suggests to us the natural problem of investigating the unbounded regime. In this case it becomes more delicate to make conjectures; surprises often occur due to the presence of underlying parabolic phenomena. The main result we have obtained is the following:

\begin{theorem}\label{sw1}
Let $f$ be a function of the class $\mathscr{L}$ with critical exponent $\ell>1$:
\begin{enumerate}
\item If $\ell\leq 2$, then the sequence $(\tau_n)_{n \in \N}$ tends to zero at least exponentially fast.
\item If $\ell\geq3$, the sequence $(\tau_n)_{n \in \N}$ is bounded away from zero.
\end{enumerate}
\end{theorem}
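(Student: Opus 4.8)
The plan is to control the scalings $\tau_n$ by reducing every piece of dynamics to two ingredients: the pure power maps $x\mapsto x^{\ell}$ sitting at the endpoints $a,b$ of $U$, and long compositions of local diffeomorphisms, which are handled by cross-ratio distortion. First I would set up the toolbox. For $f\in\L$ the cross ratio $\Cr$ (and the Poincaré length $\Poin$) is distorted along an orbit $T,f(T),\dots,f^{k-1}(T)$ of a homeomorphic branch of $f^{k}$ by a factor controlled by $\sum_{i<k}|f^{i}(T)|$ — the $\Cd$ version of the cross-ratio inequality. Since the intervals of the dynamical partitions have pairwise disjoint interiors, cover $\S$, and shrink to points, this gives uniform bounds: a branch of any iterate whose image has length at most $1$ has bounded distortion, and a Koebe principle yields $(1+o(1))$-distortion on subintervals sitting deep inside such a branch. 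The key point is that these constants depend only on the $\Cd$-norm of $f$, not on the partial quotients $a_i$.

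Next I would describe $\tau_n$ through the first return maps to a one-sided neighbourhood of $U$. Using Hypothesis~3 and the Koebe principle applied to $f^{q_n-1}$ near $\underline 1$, the return map $\phi_n$ factors as $(\text{diffeomorphism of bounded distortion})\circ(x\mapsto x^{\ell})$; it is again, after rescaling, a map with a flat interval (namely $\underline{-q_n}$) and the same critical exponent $\ell$, so the $\tau_n$ are its renormalised scales. The recurrence $q_{n+1}=a_{n+1}q_n+q_{n-1}$ shows that passing from level $n-2$ to level $n$ amounts to iterating $\phi_{n-1}$ and $\phi_n$ together $a_{n-1}+a_n$ times; hence $\tau_n$ is given by an explicit expression involving only the powers $x^{\ell}$ and bounded-distortion diffeomorphisms, the number of factors being governed by the $a_i$. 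From this one extracts a recursive relation between $\tau_n$, $\tau_{n-1}$, $\tau_{n-2}$ together with auxiliary ratios such as $|\underline{-q_n}|/|(\underline 0,\underline{-q_n})|$.

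For $\ell\le 2$, pushing a fixed reference cross-ratio through that expression, each crossing of a singularity inflates it by a definite factor $>1$ coming from $x\mapsto x^{\ell}$, while by the toolbox the diffeomorphic factors only perturb it by a factor tending to $1$; summing over the (at least two) singularity-crossings per double step gives $\tau_n\le\lambda\,\tau_{n-2}$ with $\lambda<1$ universal, hence at least exponential decay — this is essentially the argument of \cite{5aut},\cite{my} made uniform in the $a_i$. For $\ell\ge 3$ one runs the same machinery to obtain a uniform lower bound by induction: assuming $\tau_k\ge\varepsilon_0$ for $k<n$, the auxiliary gap ratios are fed back through $x\mapsto x^{\ell}$ in a configuration for which, when $\ell\ge3$, the near-neutral iteration does not collapse and the accumulated distortion stays summable, so $\tau_n\ge\varepsilon_0$.

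The main obstacle is precisely this last induction in the unbounded regime: when $a_{n+1}$ is large one must iterate the almost-neutral return map $\phi_n$ an unbounded number of times while keeping both the accumulated distortion of the diffeomorphic factors and the deviation of $\phi_n$ from its $x^{\ell}$-model under control all the way down to the scale of $\underline{q_{n+1}}$. This is where $\ell=3$ rather than $\ell=2$ is forced: the error terms generated by $(x-b)^{\ell}$ over such an iteration are summable uniformly in the number of steps exactly when $\ell\ge 3$, whereas for $2<\ell<3$ they degrade, which is why that sub-range is flagged as delicate in the discussion above. Matching the two one-sided return maps across $U$, and handling the finitely many (but $n$-dependent) exceptional scales near the ends of each $\phi_n$-orbit, is the remaining bookkeeping.
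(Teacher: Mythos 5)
Your outline correctly identifies the general strategy---transport ratios of gaps along the dynamics with cross-ratio and Koebe control, with the power map $x\mapsto x^{\ell}$ acting at each passage through the singularity, and extract a recursion between consecutive scales---and your description of the case $\ell\le 2$ matches the actual mechanism: the paper derives $(\alpha_n)^{\ell}\le (\text{distortion factors})\cdot\alpha_{n-2}^{2}$, where the distortion factors form a convergent product because the relevant orbit sums sit inside gaps of the dynamical partitions, which shrink exponentially; for $\ell\le2$ this forces at least exponential decay, uniformly in the partial quotients.

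The genuine gap is in the case $\ell\ge3$. Your induction ``if $\tau_k\ge\varepsilon_0$ for $k<n$ then $\tau_n\ge\varepsilon_0$'' is asserted rather than derived, and the reason you give for the threshold at $3$ (uniform summability of the error of the $(x-b)^{\ell}$ model over unboundedly many near-neutral iterates) is not the actual obstruction. What is required is a quantitative recursion with explicit exponents: introducing $\beta_n=\frac{|(\underline{-q_n-q_{n+1}},\underline 0)|}{|[\underline{-q_n-q_{n+1}},\underline 0)|}$ and telescoping through the $a_{n+1}$ intermediate preimages via inequalities of the form $\beta_n(i)^{\ell}\ge K\beta_n(i+1)$ and $\gamma_n(i)^{\ell}\ge K\gamma_n(i+1)$, one obtains
\[
\beta_n\;\ge\;K\,\beta_{n-1}^{\,\frac{1}{\ell}+\frac{1-\ell^{-a_{n+1}}}{\ell-1}}\;\beta_{n-2}^{\,\ell^{-a_n}} .
\]
Setting $\nu_n=-\ln\beta_n$ turns this into a linear recursion $\nu_n\le c_n\nu_{n-1}+d_n\nu_{n-2}+K_1$ with $c_n=\frac1\ell+\frac{1-\ell^{-a_{n+1}}}{\ell-1}$ and $d_n=\ell^{-a_n}$. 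The delicate point, which your sketch does not address, is that for $\ell=3$ and $a_n=a_{n+1}=1$ one has $c_n+d_n=1$ exactly, so boundedness of $(\nu_n)$ is not automatic and the additive constant $K_1$ threatens to accumulate; the paper must show that long products of the associated $2\times2$ companion matrices contract uniformly, via a case analysis on whether consecutive partial quotients are large or small and a spectral-radius estimate for the dominating matrix. This is precisely where $\ell\ge3$ is used, and it cannot be replaced by the qualitative assertion that the near-neutral iteration ``does not collapse.'' Without producing the exponents of the recursion and the contraction of these matrix products, the second claim of the theorem is not established.
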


Without any assumption on the rotation number we prove that the geometry near to the boundary points of the flat interval is degenerate when the critical exponent is less than or equal to $2$ and becomes  bounded when the critical exponent goes to $3$.  It remains unknown what happens between $2$ and $3$.
\par
The difficulty of the problem comes from the presence of parabolic phenomena that generate accumulation of constants which is not always easy to control. In fact the main idea of the proof is to find a recursive formula for the sequence $\tau_n$ and to study its convergence. The accumulation of constants appear basically everywhere: both in the recurrence as well as in the study of the convergence. This fact leads us to suspect that the problem could be real, not only technical.
\par
Moreover this result remains the first one to be valid for functions with any rotation number. It opens further questions and potentially has many interesting and significant applications.

{We illustrate it on an example of studies of the quasi-minimal set\footnote{The closure of any recurrent trajectories is called a quasi-minimal set} of a Cherry flow. We recall that Cherry flows are $\Cinf$ flows on the $2$-dimension torus with one hyperbolic saddle and sink. They were construct in 1938 by Cherry in \cite{Cherry} and they were the first example of $\Cinf$ flows on the torus with a non-trivial quasi-minimal set.}

\par
{Using Theorem \ref{sw1} and following the strategies in \cite{my}, we are now able to generalize Theorem 1.6 in \cite{my} and give an example of Cherry flow with a metrically non-trivial quasi-minimal set in the general case of unbounded regime \footnote{Any Cherry flow has a well defined rotation number $\rho \in [0,1)$  equal to the rotation number of its first return map to any global Poincar\'e section.}. More precisely:}

\begin{theorem}\label{3}
Let $X$ be a Cherry flow with  $\lambda_1>0>\lambda_2$ being the eigenvalues of the saddle point.  If  $\left|\lambda_2\right|\geq3\lambda_1$ then the quasi-minimal set of $X$ has Hausdorff dimension strictly greater than $1$.
\end{theorem}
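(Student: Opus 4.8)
\noindent\emph{Proof plan.}
The plan is to extract from the flow a map of the class $\L$, apply Theorem~\ref{sw1} to get bounded geometry, and then run the (by now fairly standard) dictionary between the geometry of such a map and the Hausdorff dimension of the quasi-minimal set, as in \cite{my}. Recall that, after a $\Cd$ change of coordinates, the Poincar\'e first-return map $f$ of $X$ to a global section is a map of $\L$: its flat interval is the collapsed trace on the section of the orbits that fall into the saddle, and the form $x\mapsto x^{\ell}$ at the endpoints of the flat interval comes from the hyperbolic passage near the saddle, whose exit coordinate scales like the entry distance to the stable separatrix raised to the power $\left|\lambda_2\right|/\lambda_1$; thus the critical exponent of $f$ is $\ell=\left|\lambda_2\right|/\lambda_1$ (see \cite{MvSdMM}, \cite{mendes}, \cite{my}). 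The flow $X$ has irrational rotation number, equal to $\rho(f)$. The hypothesis $\left|\lambda_2\right|\ge3\lambda_1$ says exactly that $\ell\ge3$, in particular $\ell>1$; hence Theorem~\ref{sw1}(2) applies and gives that the scalings $\tau_n$ of $f$ are bounded away from zero, i.e.\ $f$ has bounded geometry.

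The second, and main, step is to deduce from bounded geometry that the non-wandering set $K=\S\setminus\bigcup_{i\ge0}f^{-i}(\operatorname{int}U)$ of $f$ --- a Cantor set of zero Lebesgue measure on which $f$ acts minimally --- has Hausdorff dimension $\HD(K)\ge d_0>0$. I would work with the dynamical partitions $\mathcal{P}_n$, whose atoms are delimited by the points $\underline{j}$ with $0\le j<q_n+q_{n-1}$: bounded geometry controls the ratio between a level-$n$ atom and the level-$(n-2)$ atom containing it, while the real (a~priori) bounds for the long branches $f^{q_n}$, valid for every irrational rotation number independently of the arithmetic of $\rho$, force the $a_{n+1}$ atoms of $\mathcal{P}_{n+1}$ that subdivide an atom of $\mathcal{P}_{n-1}$ to have comparable lengths, each of order $a_{n+1}^{-1}$ times that of the parent. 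A mass-distribution estimate against the unique $f$-invariant probability measure carried by $K$ then produces the lower bound. This is essentially the estimate of \cite{5aut}, \cite{my}, whose argument I would need to re-examine: there it is run under the bounded-type hypothesis, and the delicate point is that, for an arbitrary irrational rotation number, the subdivision steps may have an unbounded number $a_{n+1}$ of children, so one must verify --- using the real bounds to keep those children comparable in size --- that no Hausdorff dimension is lost in the limit $a_{n+1}\to\infty$. This is the step I expect to be the real obstacle.

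Granting $\HD(K)>0$, the conclusion follows from the relation between the dimension of the invariant Cantor set of the return map and that of the quasi-minimal set of the flow. As in \cite{my}, up to a subset of dimension at most one (the saddle point and the separatrices contained in its closure) the quasi-minimal set $\Omega$ of $X$ is the union of the flow-trajectories through the points of $K$; localising the cross-section away from the forward orbit of the singular point makes the first-return time $\Cd$ and bounded between two positive constants, so there the flow map $(x,t)\mapsto\phi_t(x)$ is bi-Lipschitz, and together with minimality of $f|_K$ and the bounded distortion of the inverse branches of $f$ this gives $\HD(\Omega)=1+\HD(K)$. Hence $\HD(\Omega)\ge1+d_0>1$, which is the assertion of Theorem~\ref{3}.
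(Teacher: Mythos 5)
Your overall architecture coincides with the paper's: the critical exponent of the first-return map is $\ell=|\lambda_2|/\lambda_1\geq 3$, Theorem~\ref{sw1}(2) gives bounded geometry, the quasi-minimal set is locally a product of the non-wandering set $\Omega$ of $f$ with a segment, and everything reduces to showing $\HD(\Omega)>0$ (this is Theorem~\ref{mare} in the paper). But the one step you yourself flag as ``the real obstacle'' is precisely the step the paper has to supply, and your proposed route through it is not the one that works. You want to show that all $a_{n+1}$ long gaps of $\mathscr P_{n}$ subdividing a long gap of $\mathscr P_{n-1}$ are mutually comparable, each of order $a_{n+1}^{-1}$ times the parent, and then push a mass-distribution estimate through. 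In the unbounded regime this comparability is exactly what is not available: the whole technical apparatus of Section 3 (the quantities $\beta_n(i)$, $\gamma_n(i)$ and Lemmas~\ref{beta}, \ref{gamma}) exists because the children of a long gap can be badly non-uniform in size near the flat interval, and nothing in the paper (or in the real bounds of Proposition~\ref{figo1}) establishes the uniformity you need. So as written your plan has a genuine gap at its central point.

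The paper's device for Theorem~\ref{mare} avoids the issue entirely: instead of keeping all children, it keeps only \emph{two} children of each long gap --- the left-most (short) gap and the right-most (long) gap of the subdivision --- and assigns each of them half the mass of the parent. Proposition~\ref{aiuto2} shows that these two particular children are comparable with the parent: the left-most one because its ratio to the parent is exactly $\tau_{n+1}$, which is bounded below by Theorem~\ref{sw1}(2) (this is where bounded geometry is actually used), and the right-most one by a Koebe/cross-ratio argument reducing it to $\alpha_n$. The resulting Cantor subset $K\subset\Omega$ then satisfies $\mu(I)\leq C|I|^{\alpha}$ with $\alpha=\log_{\lambda_1}(1/\sqrt2)>0$, and the mass distribution principle gives $\HD(K)>0$ regardless of how large the partial quotients $a_{n+1}$ are. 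If you replace your ``all children comparable'' step by this two-children selection, the rest of your argument (including the final product/bi-Lipschitz step, which matches the paper's reduction via the product formula) goes through.
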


\par
{Another application of Theorem \ref{sw1} concerns the study of the physical measures for Cherry flows. These are probability measures with basin of attraction of positive Lebesgue measure and they are of a particular interest as they describe the statistical properties of a large set of orbits. Such a study were initiated in \cite{SV}. While the non-positive divergence case was resolved, the positive divergence one still lacked the complete description. Some conjectures were put forward. Theorem \ref{sw1} gives an answer to this conjectures by providing a description of the physical measures for Cherry flows in the positive divergence case. The details are contained in \cite{my3}.}

\section{Technical Tools}

\subsection{Distortion
Techniques}\label{distortion}
The main ingredient in the proof of the principal result of this paper is the control of the distortion of iterates of maps in $\L$. We will use two different cross-ratios, $\Cr$ and $\Poin$.
\begin{definition}
If $a<b<c<d$ are four points on the circle, then we can define their \it{cross-ratio} $\Cr$ by:
\begin{equation*}
\Cr(a,b,c,d):=\frac{|b-a||d-c|}{|c-a||d-b|},
\end{equation*}
and their \it{cross-ratio} $\Poin$ by:
\begin{equation*}
\Poin(a,b,c,d):=\frac{|d-a||b-c|}{|c-a||d-b|}.
\end{equation*}
\end{definition}
\par
Now we analyze the distortion of these two kinds of cross-ratios.
\par

Diffeomorphisms with negative Schwarzian derivative increase cross-ratio \textbf{Poin}:
\begin{displaymath}
\textbf{Poin}\left(f(a),f(b),f(c),f(d)\right)>\textbf{Poin}\left(a,b,c,d\right).
\end{displaymath}
In general, without the assumption of negative Schwarzian, the following holds:
\begin{theorem}\label{point}
Let $f$ be a $C^2$ map with no flat critical points. There exists a bounded increasing function $\sigma:\left[0,\infty\right)\to\mathbb R_+$ with $\sigma(t)\to 0$ as $t\to 0$ with the following property. Let $\left[b,c\right]\subset\left[a,d\right]$ be intervals such that $f^n_{|\left[a,d\right]}$ is a diffeomorphism. Then
\begin{displaymath}
\textbf{Poin}\left(f^n(a),f^n(b),f^n(c),f^n(d)\right)\geq exp\{-\sigma(\tau)\sum_{i=0}^{n-1}|f^{i}(\left[a,b\right))|\}\textbf{Poin}\left(a, b, c, d\right),
\end{displaymath}
where $\tau=\max_{i=0,\dots,n-1}|f^{i}(\left(c,d\right])|$.
\end{theorem}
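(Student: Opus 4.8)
The plan is to reduce the statement to a single iterate by telescoping, and then to prove a one‑step cross‑ratio distortion bound for $C^{2}$ maps, treating separately the part of the circle where $f$ is a diffeomorphism with non‑vanishing derivative and fixed neighbourhoods of the critical points. First I would telescope. Set $P_{i}:=\Poin\bigl(f^{i}(a),f^{i}(b),f^{i}(c),f^{i}(d)\bigr)$; then $P_{n}/P_{0}=\prod_{i=0}^{n-1}P_{i+1}/P_{i}$, and each factor $P_{i+1}/P_{i}$ is the cross‑ratio distortion under $f$ of the quadruple $f^{i}(a)<f^{i}(b)<f^{i}(c)<f^{i}(d)$, which sits inside $f^{i}([a,d])$, an interval on which $f$ -- indeed $f^{n-i}$ -- is a diffeomorphism. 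Taking logarithms, it suffices to produce a bounded increasing $\sigma$ with $\sigma(t)\to 0$ as $t\to 0$ such that, for every $x<y<z<w$ lying in a domain of injectivity of $f$,
\begin{displaymath}
\log\frac{\Poin\bigl(f(x),f(y),f(z),f(w)\bigr)}{\Poin(x,y,z,w)}\;\geq\;-\,\sigma\bigl(|(z,w]|\bigr)\,\bigl|[x,y)\bigr|;
\end{displaymath}
summing this over $i$ and using that $\sigma$ is increasing together with $|f^{i}((c,d])|\leq\tau$ then gives the theorem. By the invariance of $\Poin$ under reversing the orientation of the line, the two end intervals play symmetric roles, so it is enough to control the left one.

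For the one‑step bound on a part of the circle where $|f'|$ is bounded below, I would write the distortion as a ratio of average slopes $A(p,q):=\bigl(f(q)-f(p)\bigr)/(q-p)$,
\begin{displaymath}
\frac{\Poin\bigl(f(x),f(y),f(z),f(w)\bigr)}{\Poin(x,y,z,w)}=\frac{A(x,w)\,A(y,z)}{A(x,z)\,A(y,w)},
\end{displaymath}
and expand each slope via $f'(t)=f'(x)\exp\int_{x}^{t}N$ with $N:=f''/f'$ the nonlinearity. A direct computation -- the classical $C^{2}$ cross‑ratio estimate, as in de~Melo--van~Strien, Chapter~IV -- then bounds the logarithm of this quotient below by $-\,\omega_{N}(|(z,w]|)\,|[x,y)|$, where $\omega_{N}$ is a modulus of continuity of $N$ and is bounded, increasing, and vanishes at $0$. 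The localization of the modulus to the right end interval comes from the structure of $\Poin$, whose distortion factors as the quotient of $A(x,w)/A(y,w)$ by $A(x,z)/A(y,z)$: two comparisons of averages of $f'$ over intervals differing only by the left sliver $[x,y]$. In particular this recovers the familiar fact that maps of negative Schwarzian derivative do not decrease $\Poin$; the identity $\Poin+\Cr=1$ turns the statement into its dual for $\Cr$ when convenient.

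The hard part will be the critical points. There $N=f''/f'$ is unbounded, so the computation above fails, and because we iterate, the intervals $f^{i}([a,d])$ may approach a critical value with no uniform lower bound on $|f'|$. This is precisely where non‑flatness enters. There are finitely many critical points $c_{1},\dots,c_{m}$, and near $c_{j}$ the map factors as $f=\phi_{j}\circ(u\mapsto u^{k_{j}})\circ\psi_{j}$ with $k_{j}<\infty$ and $\phi_{j},\psi_{j}$ fixed $C^{2}$ diffeomorphisms of bounded nonlinearity. The power map $u\mapsto u^{k_{j}}$ has negative Schwarzian derivative, hence does not decrease $\Poin$ wherever the quadruple happens to land, while $\phi_{j}$ and $\psi_{j}$ are covered by the previous step; off a fixed neighbourhood of $\{c_{1},\dots,c_{m}\}$ the map $f$ is a $C^{2}$ diffeomorphism with $|f'|$ bounded below, and that step applies directly. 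Patching these finitely many local contributions and taking $\sigma$ to be the maximum of the resulting moduli yields one bounded increasing $\sigma$ with $\sigma(t)\to 0$ as $t\to 0$, completing the proof. The delicate point throughout is to keep this patching -- and the accumulation of the finitely many constants it produces -- uniform in the position of the quadruple on the circle and, through the telescoping, uniform in the number of iterates.
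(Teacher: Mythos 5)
First, a remark on the comparison itself: the paper does not prove this theorem; it states it and refers to van Strien \cite{SvS}, so the only in-paper ``proof'' is that citation. Your overall architecture is the standard one and several pieces are correct: the telescoping $P_n/P_0=\prod_i P_{i+1}/P_i$, the observation that $\sum_i\sigma(|f^i((c,d])|)\,|f^i([a,b))|\le\sigma(\tau)\sum_i|f^i([a,b))|$ converts a one-step bound into the stated iterated one, the factorization of the one-step distortion as $\bigl(A(x,w)/A(y,w)\bigr)\big/\bigl(A(x,z)/A(y,z)\bigr)$, the identity $\Poin+\Cr=1$, and the fact that the local model at a non-flat critical point has negative Schwarzian and therefore cannot decrease $\Poin$.

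The genuine gap is in the one-step estimate, which is where the entire content of the theorem lives. Your heuristic explains only why the bound is proportional to $|[x,y)|$ (each of the two slope ratios compares averages of $f'$ over intervals differing by the left sliver, hence each is $1+O(|[x,y)|)$); it does not explain why the coefficient is $\sigma(|(z,w]|)$ with $\sigma(t)\to0$ as $t\to0$, rather than a constant or a modulus evaluated at the length of the whole interval $[x,w]$. If one carries out the ``direct computation'' you describe---expanding each average slope through the nonlinearity $N=f''/f'$ and using its modulus of continuity---the two error terms that survive the obvious cancellation are each of size comparable to $|[x,y)|\,|[z,w]|/|[x,w]|$; when $[x,y]$ and $[y,z]$ are much shorter than $[z,w]$ this is comparable to $|[x,y)|$ with a fixed constant, not to $\sigma(|[z,w]|)\,|[x,y)|$. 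The correct coefficient comes from a further, second-order cancellation between those two terms---the same cancellation that for $C^3$ maps yields $\log\bigl(\Poin(f(a),\dots)/\Poin(a,\dots)\bigr)=-\tfrac16\,Sf(\xi)\,|[a,b]|\,|[c,d]|+\dots$, with the product of the \emph{two end intervals}---and exhibiting it for maps that are merely $C^2$ (so that $Sf$ does not exist) is precisely the technical core of van Strien's lemma. Your sketch defers exactly this step to ``the classical $C^2$ cross-ratio estimate, as in de Melo--van Strien,'' i.e.\ to the statement being proved, so the argument is circular at its decisive point. Two smaller issues: for the class $\L$ the critical exponent is an arbitrary real $\ell>1$, so the local model is $u\mapsto|u|^\alpha$ with real $\alpha$ rather than an integer power (harmless, since $S(|u|^\alpha)=-(\alpha^2-1)/(2u^2)<0$); and in the patching near critical points the moduli must be evaluated at the images of $(z,w]$ under the local diffeomorphisms and the power map, which is only controlled because those maps are uniformly Lipschitz on the relevant neighborhoods---this should be said explicitly.
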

The proof of Theorem \ref{point} can be found in \cite{SvS}.

\par
Here, we formulate the result which enables us to control the growth of the iterates of cross-ratios $\Cr$ even if the map is no longer a homeomorphism with negative Schwarzian or is not invertible.\\
The reader can refer to \cite{Swiatek} for the general case and to \cite{Grfun} for our situation.
\par
Consider a chain of quadruples
\begin{equation*}
\bigcup_{i = 0}^n \lbrace{(a_i,b_i,c_i,d_i)\rbrace} 
\end{equation*}
such that each is mapped onto the next by the map $f$. If the following conditions hold:
\begin{itemize}
\item There exists un integer $k \in \N$, such that each point of the circle belongs to at most $k$ of the intervals $(a_i,d_i)$.
\item The intervals $(b_i,c_i)$ do not intersect $\underline 0$.
\end{itemize}
Then, there exists a constant $K > 0$, independent of the set of quadruples, such that:
\begin{equation*}
\log\frac{\Cr(a_n, b_n, c_n, d_n)}{\Cr(a_0, b_0, c_0, d_0)}\leq K
\end{equation*}

In order to control the distortion of the iterates of our maps we will also frequently use the following proposition which is a corollary of the Koebe principle in \cite{GS-Schwarz}.
\begin{proposition}\label{Koebe liv}
Let $f$ be a function in $\mathscr{L}$ and let $J\subset T$ be two intervals of the circle. Suppose that, for some $n\in\mathbb N$
\begin{itemize}
\item[-] $f^n$ is a diffeomorphism on $T$,
\item[-] $\sum_{i=0}^{n-1}\left|f^{i}(J)\right|$ is bounded,
\item[-] $\left|f^n\left(J\right)\right|\leq K dist\left(f^n\left(J\right),\partial f^n\left(T\right)\right)$ with $K$ a positive constant.
\end{itemize}
 Then, there exists a constant $C$ such that, for every two intervals $A$ and $B$ in $J$
\begin{displaymath}
\frac{\left|f^n\left(A\right)\right|}{\left|f^n\left(B\right)\right|}\geq C\frac{\left|A\right|}{\left|B\right|}.
\end{displaymath}

\end{proposition}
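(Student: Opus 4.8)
The plan is to deduce this from the Koebe principle of \cite{GS-Schwarz} for $C^2$ maps without flat critical points, after reducing the stated inequality to a bound on the nonlinearity of $f^n$ on $J$. First I would reduce the problem: it suffices to produce a constant $C_1 \geq 1$, depending only on $K$ and the bound on $\sum_{i=0}^{n-1}|f^{i}(J)|$, such that
\[
\frac{(f^n)'(x)}{(f^n)'(y)} \leq C_1 \qquad \text{for all } x,y \in J .
\]
Indeed, granting this, the mean value theorem produces points $\xi_A\in A$ and $\xi_B\in B$ with $|f^n(A)|=(f^n)'(\xi_A)|A|$ and $|f^n(B)|=(f^n)'(\xi_B)|B|$, whence $\frac{|f^n(A)|}{|f^n(B)|}\geq C_1^{-1}\frac{|A|}{|B|}$, and one takes $C=C_1^{-1}$.

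Next I would check that the geometric data required by the Koebe principle are in place. Since $f^n_{|T}$ is a diffeomorphism, for each $i=0,\dots,n-1$ the map $f$ restricts to a $C^2$ orientation-preserving diffeomorphism of $f^{i}(T)$ onto $f^{i+1}(T)$; in particular $f^{i}(T)$ avoids the flat interval $\underline 0$ (otherwise $f'$ would vanish in its interior), and hence so do all the intervals $f^{i}(J)\subset f^{i}(T)$. This is exactly the situation in which the cross-ratio estimates of Section~\ref{distortion} apply: the $\Poin$-estimate of Theorem~\ref{point} and the $\Cr$-inequality stated afterwards (note $\Cr+\Poin=1$, so the two are equivalent statements about distortion) hold for chains of quadruples whose middle intervals lie in the $f^{i}(T)$. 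Feeding in the summability hypothesis $\sum_{i=0}^{n-1}|f^{i}(J)|\le\text{const}$, these estimates bound, uniformly, the distortion under $f^n$ of the cross-ratios $\Cr$ and $\Poin$ of any quadruple $(a,b,c,d)$ with $[b,c]\subset J$. Together with the Koebe-space hypothesis $|f^n(J)|\le K\,\dist(f^n(J),\partial f^n(T))$, this is precisely the input of the Koebe principle of \cite{GS-Schwarz}, which then yields the bounded-nonlinearity estimate displayed above; combined with the first step this finishes the proof.

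Finally, a word on the mechanism and the delicate point. Writing $T=(p,q)$ and $J=(u,v)$, one applies the cross-ratio estimates to quadruples such as $(p,x,y,q)$ with $u\le x<y\le v$: the source cross-ratio is, up to a factor controlled by $\sum_i|f^{i}(J)|$, comparable to its $f^n$-image, and on the image side the $K$-Koebe space lets one replace $\Cr\big(f^n(p),f^n(x),f^n(y),f^n(q)\big)$ by the ratio $|f^n([u,x])|/|f^n([u,v])|$ up to a factor depending only on $K$; unwinding this as $x,y$ range over $J$ and letting $y\to x$ gives the pointwise derivative bound. The one place that needs care — and the reason one must use the cross-ratio tools in the form with middle interval inside $J$ rather than a naive chain-rule argument — is that we control $\sum_i|f^{i}(J)|$ but not $\sum_i|f^{i}(T)|$: the outer intervals along the orbit need not have bounded total length, so one must make sure every cross-ratio estimate invoked legitimately charges only the iterates of the inner interval (and, for the $\Cr$-inequality, exploits that a single diffeomorphic pull-back chain is involved). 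I expect this bookkeeping to be the only real obstacle; the remainder is the standard Koebe computation.
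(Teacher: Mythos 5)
The paper offers no proof of this proposition beyond asserting that it is a corollary of the Koebe principle of \cite{GS-Schwarz}, and your argument --- reduce the claim to a bounded-distortion estimate for $(f^n)'$ on $J$ via the mean value theorem, observe that the orbit of $T$ must avoid the flat interval because $f^n|_T$ is a diffeomorphism, and then invoke that Koebe principle with exactly the stated hypotheses (summable iterates of the inner interval plus Koebe space at time $n$) --- is the same route, carried out correctly. One caution on your closing ``mechanism'' paragraph: the quadruples $(p,x,y,q)$ with $p,q\in\partial T$ that you sketch would, under Theorem~\ref{point}, charge $\sum_i|f^{i}([p,x))|$, i.e.\ the iterates of an \emph{outer} component of $T\setminus J$, which is precisely what is not controlled; the standard implementation (and the one in \cite{GS-Schwarz}) first replaces $T$ by a smaller $T'\supset J$ whose image $f^n(T')$ is a definite scaled neighborhood of $f^n(J)$ and uses a cross-ratio argument to show $\sum_i|f^{i}(T'\setminus J)|$ is bounded in terms of $\sum_i|f^{i}(J)|$ and $K$, after which the distortion estimate is routine.
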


\subsection {Continued Fractions and Partitions}\label{dynamicalpartition}
Let $f\in\L$. Since $f$ is order-preserving and has no periodic points, there exists an order-preserving and continuous map $h:\mathbb{S}^{1}\rightarrow\mathbb{S}^{1}$ such that $h\circ f=R_{\rho}\circ h$, where $\rho$ is the rotation number of $f$ and $R_{\rho}$ is the rotation by $\rho$. In particular, the order of points in an orbit of $f$ is the same as the order of points in an orbit of $R_{\rho}$. Therefore, results about $R_{\rho}$ can be translated into results about $f$, via the semiconjugacy $h$.

We can build the so called dynamical partitions $\mathscr{P}_{n}$ of $\mathbb S^1$ to study the geometric properties of $f$, see \cite{Grfun}. The partition $\mathscr{P}_{n}$ is generated by the first $q_{n}+q_{n+1}$ preimages of $U$ and consists of
\begin{displaymath}
\mathscr{I}_{n}:=\left\{ \underline{-i}: 0\leq i\leq q_{n+1}+q_{n}-1\right\},
\end{displaymath}
together with the gaps between these intervals.

There are two kinds of gaps:
\begin{itemize}
\item The `long' gaps are of the form
\begin{displaymath}
I_{i}^{n}:=f^{-i}(I^{n}_{0}), i=0,1,\ldots, q_{n+1}-1
\end{displaymath}
where $I^{n}_{0}$ is the interval between $\underline{-q_{n}}$ and $\underline 0$ for $n$ even or the interval between $\underline{0}$ and $\underline{-q_n}$ for $n$ odd.
\end{itemize}
\begin{itemize}
\item The `short' gaps are of the form
\begin{displaymath}
I_{i}^{n+1}:=f^{-i}(I^{n+1}_{0}), i=0,1,\ldots, q_{n}-1
\end{displaymath}
where $I^{n+1}_{0}$ is the interval between $\underline 0$ and $\underline{-q_{n+1}}$ for $n$ even or the interval between $\underline{ -q_{n+1}}$ and $\underline{0}$ for $n$ odd.
\end{itemize}
We will briefly explain the structure of the partitions. Take two consecutive dynamical partitions of order $n$ and $n+1$. The latter is clearly a refinement of the former. All `short' gaps of $\mathscr P_{n}$ become `long' gaps of $\mathscr P_{n+1}$ while all `long' gaps of $\mathscr P_{n}$ split into $a_{n+2}$ preimages of $U$ and $a_{n+2}$ `long' gaps and one `short' gap of the next partition $\mathscr P_{n+1}$:
\begin{equation}\label{div}
I_{i}^{n}=\bigcup_{j=1}^{a_{n+2}}f^{-i-q_n-jq_{n+1}}(U)\cup\bigcup_{j=0}^{a_{n+2}-1}I_{i+q_n+jq_{n+1}}^{n+1}\cup I_{i}^{n+2}.
\end{equation}

Several of the proofs in the following will depend strongly on the relative positions of the points and intervals of $\mathscr{P}_{n}$. In reading the proofs the reader is advised to keep the Figure \ref{fig:primera} in mind, which show some of these objects near the flat interval $\underline{0}$.
 \begin{figure}[h]
		\begin{center}
		\includegraphics[width=15.5cm]{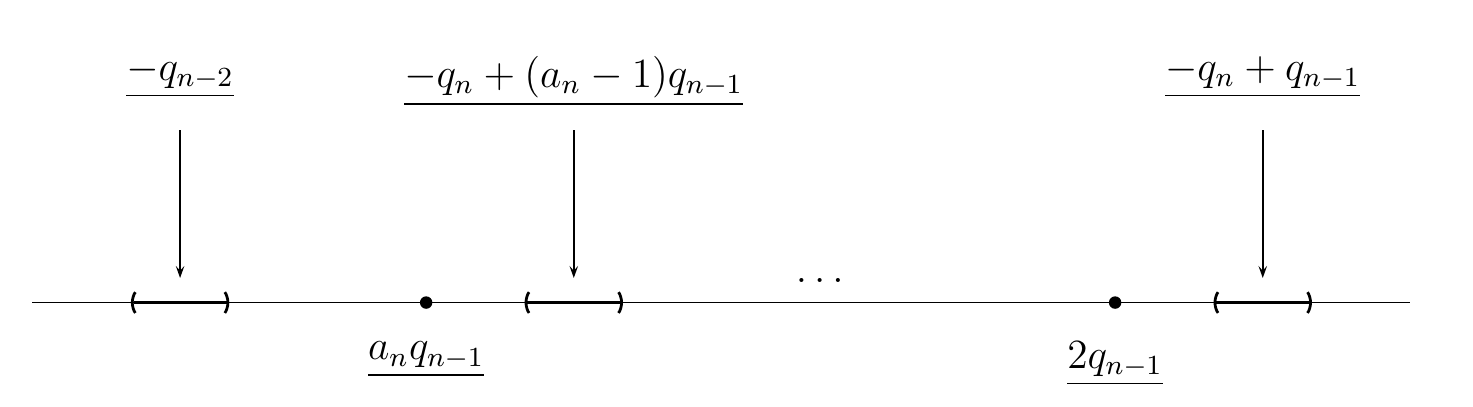}
		
		\end{center}
\end{figure}
 \begin{figure}[h]
		\begin{center}
		\includegraphics[width=15.5cm]{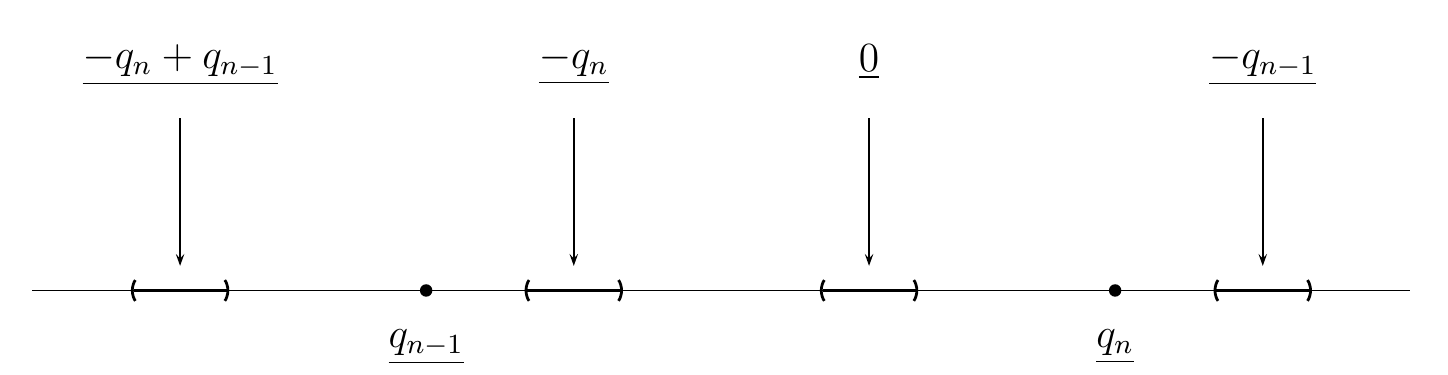}
		\caption{Structure of the dynamical partition $\mathscr P_{n-1}$ for $n$ even and $a_n>1$.}
		\label{fig:primera}
		\end{center}
\end{figure}

We state a standard fact and few results from \cite{5aut} which will be used frequently in the paper.
\begin{fact}\label{F1}
Let $f\in\L$ and let $x,y,z$ be three points of the circle with $y$ between $x$ and $z$ such that, of the three, the point $x$ is closest to the flat interval. If $f$ is a diffeomorphism on $(x,z)$, the following inequality holds:
\begin{displaymath}
\frac{|f(z)-f(y)|}{|f(z)-f(x)|}\leq K\frac{|z-y|}{|z-x|},
\end{displaymath}
where $K$ is a positive uniform constant.

\end{fact}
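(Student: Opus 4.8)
The plan is to turn the inequality into a comparison of mean slopes of $f$ and then to exploit the normal form of $f$ near $\partial U$. For an interval with endpoints $p,q$ write $\sigma(p,q):=|f(q)-f(p)|/|q-p|$ for the mean slope of $f$ there, so that $\sigma(p,q)=\frac{1}{|q-p|}\int f'$. Dividing the two sides of the asserted bound by $|z-y|$ and $|z-x|$ respectively, the statement is exactly
\begin{displaymath}
\sigma(y,z)\le K\,\sigma(x,z).
\end{displaymath}
Since $f$ is a diffeomorphism on $(x,z)$, this arc is contained in the single complementary arc $\mathbb{S}^1\setminus\overline{U}$, which I parametrise as $(0,L)$ with $L=1-|\underline{0}|$; reversing the orientation if necessary I may assume $x<y<z$. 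The hypothesis that $x$ is the closest of the three points to $\overline{U}$ then forces $x\le L/2$ and $x\le L-z$, i.e. $x\le y\le z\le L-x$: in particular the distance of $[x,z]$ to $\partial U$ is realised at the endpoint $x$, and if $z$ leaves a small neighbourhood of the endpoint $0$ the interval $[x,z]$ must cross a definite portion of the arc.

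Next I would record the shape of $f'$. From the normal form ($f(t)=t^{\ell}$ near $0$ and $f(t)=(L-t)^{\ell}$ near $L$, after the reduction $h_r=h_l=\mathrm{id}$), together with $f'>0$ off $\overline{U}$ and compactness, there are constants $0<\delta_0\le\Delta_0$, $0<\epsilon_0<L/3$ and $M_0<\infty$ with $f'(t)\asymp\min(\dist(t,\overline{U}),\epsilon_0)^{\ell-1}$ on $(0,L)$, with $\delta_0\le f'\le\Delta_0$ on the ``bulk'' $[\epsilon_0,L-\epsilon_0]$ and $f'\le M_0$ everywhere. The arithmetic heart of the argument is the elementary inequality: for $0\le X\le Y\le Z$ and $\ell>1$,
\begin{displaymath}
\frac{(Z^{\ell}-Y^{\ell})/(Z-Y)}{(Z^{\ell}-X^{\ell})/(Z-X)}\le\ell,
\end{displaymath}
which holds because both quotients are secant slopes of the convex function $t\mapsto t^{\ell}$ with common right endpoint $Z$, hence are non-decreasing in the left endpoint and confined to the interval $[Z^{\ell-1},\ell Z^{\ell-1}]$.

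I would then split according to the position of $[x,z]$. (i) If $z\le 2\epsilon_0$, then $f'(t)\asymp t^{\ell-1}$ on all of $[x,z]$ with uniform constants; writing $\sigma(y,z)$ and $\sigma(x,z)$ as averages of $f'$ and feeding $X=x$, $Y=y$, $Z=z$ into the core inequality gives $\sigma(y,z)/\sigma(x,z)\le C\ell$. (ii) If $[x,z]\subset[\epsilon_0,L-\epsilon_0]$, bounded distortion gives $\sigma(y,z)/\sigma(x,z)\le\Delta_0/\delta_0$. (iii) In the remaining cases $z>2\epsilon_0$ and, since $x\le L-z$ and $x$ is the closest of the three points to $\overline{U}$, the interval $[x,z]$ contains a sub-interval of $[\epsilon_0,L-\epsilon_0]$ of length at least $\epsilon_0$; hence $\sigma(x,z)\ge|z-x|^{-1}\epsilon_0\delta_0\ge\epsilon_0\delta_0$ while $\sigma(y,z)\le M_0$, so the ratio is again bounded. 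Taking $K$ to be the largest of the three bounds finishes the proof.

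The main obstacle is exactly case (iii): the bookkeeping that controls $[x,z]$ when it is large. One must first observe that the configuration in which all three points lie near the \emph{far} endpoint is impossible (it would make $y$ strictly closer to $\overline{U}$ than $x$), and then that as soon as $z$ leaves the power-law neighbourhood of the near endpoint the interval $[x,z]$ is forced to sweep across a uniformly large part of the region where $f$ has bounded distortion. This is what keeps $\sigma(x,z)$ bounded below and prevents the possibly huge factor $(|y|/|x|)^{\ell-1}$ coming from the singularity — which would survive in a naive cross-ratio computation using $\partial U$ as a fourth point, where the cross-ratio inequality only controls the ratio from above — from destroying the estimate.
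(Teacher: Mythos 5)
Your argument is correct. Keep in mind that the paper itself contains no proof of this statement: it is announced as ``a standard fact'' and, like Proposition \ref{figo1}, is effectively imported from \cite{5aut}, so the comparison is with the standard argument rather than with anything in the text. Your proof is a valid, self-contained version of that argument, and the two load-bearing points are both present and correctly placed. First, the convexity estimate for $t\mapsto t^{\ell}$ (all secant slopes with common right endpoint $Z$ lie in $[Z^{\ell-1},\ell Z^{\ell-1}]$, hence their ratio is at most $\ell$) handles the case where $[x,z]$ sits in the power-law neighbourhood of the near singularity, where the naive bound $(y/x)^{\ell-1}$ would be useless. Second, the hypothesis that $x$ is the closest of the three points to $\overline{U}$ is used exactly where it must be: after normalising the complementary arc to $[0,L]$ it gives $x\le y\le z\le L-x$, which excludes the configuration in which $y,z$ cluster at the far singularity while $x$ does not, and which makes your residual case (iii) force $x<\epsilon_0$ (the subcase $x\ge\epsilon_0$, $z>L-\epsilon_0$ is empty because $x\le L-z$), so that $[x,z]$ contains a fixed subinterval of the bulk, $\sigma(x,z)$ is bounded below, and $\sigma(y,z)\le\sup f'<\infty$ finishes that case. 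The only detail worth making explicit is that the normal form is guaranteed only on some one-sided neighbourhood of each endpoint of $U$, so $\epsilon_0$ must be chosen inside it, or one observes that $f'(t)\asymp t^{\ell-1}$ persists on all of $[0,2\epsilon_0]$ by continuity, positivity and compactness away from $\partial U$; you indicate this and it is routine.
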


\begin{proposition}\label{figo1}
There exists a constant $C > 0$, such that for all $n \in \N$ and $m \in \N$, if
$J = f^{-m}(U)$ is a preimage of the flat interval $U$ which belongs to the dynamical partition $\mathscr P_n$ and $I$ is one of the two gaps adjacent to $J$, then:
$$\frac{\left|J\right|}{\left|I\right|}\geq C$$
\end{proposition}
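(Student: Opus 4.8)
\noindent\emph{Proof strategy.}
First I would reduce the statement to a single model configuration in which $J$ abuts the flat interval. Let $J=f^{-m}(U)\in\mathscr P_n$ and let $I$ be a gap of $\mathscr P_n$ adjacent to $J$. When one passes from $\mathscr P_s$ to $\mathscr P_{s+1}$, a short gap of $\mathscr P_s$ becomes, unchanged, a long gap of $\mathscr P_{s+1}$, while a long gap is subdivided according to \eqref{div}; in the latter case the two extreme pieces of the subdivision are themselves gaps, so the gap of $\mathscr P_{s+1}$ adjacent to $J$ is contained in the gap of $\mathscr P_s$ adjacent to it. Iterating, it suffices to bound $|J|/|I|$ at the least level $s$ for which $\underline{-m}\in\mathscr P_s$. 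At that level, $J$ and the gap adjacent to it lie inside a long gap $I^{s-1}_i$ of $\mathscr P_{s-1}$, and $f^{i}$ maps $I^{s-1}_i$ diffeomorphically onto the long gap $I^{s-1}_0$ abutting $\underline 0$; its intermediate images being pairwise disjoint long gaps of $\mathscr P_{s-1}$ that avoid $\underline 0$, Proposition~\ref{Koebe liv} shows that this diffeomorphism has uniformly bounded distortion — the sum of the lengths of the iterates is at most the length of the circle, and the room needed around the images is provided by the neighbouring elements of $\mathscr P_{s-1}$, Fact~\ref{F1} taking care of the configurations that sit against $\underline 0$. Unwinding \eqref{div} and composing with one further iterate of the same type then reduces the claim to the model inequality
\begin{equation*}
\bigl|\underline{-q_m}\bigr|\ \ge\ C\,\bigl|(\underline 0,\underline{-q_m})\bigr|\qquad\text{for all }m,
\end{equation*}
where $(\underline 0,\underline{-q_m})$ denotes the gap of $\mathscr P_m$ separating $\underline 0$ from $\underline{-q_m}$.

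To prove the model inequality I would use that $f^{q_m}$ carries $\underline{-q_m}$ onto the flat interval $\underline 0$, whose length $|U|$ is a fixed positive constant, whereas it carries the gap $G=(\underline 0,\underline{-q_m})$ onto an interval of length at most that of the circle. Let $x$ be the endpoint of $\underline 0$ adjacent to $G$, $y$ the common endpoint of $G$ and $\underline{-q_m}$, and $z$ the far endpoint of $\underline{-q_m}$, so that $(x,z)=G\cup\underline{-q_m}$ and, among $x,y,z$, the point $x$ is the one lying on the flat interval. For $i<q_m$ the set $f^{i}((x,z))$ is the union of a forward image of $G$ and of the preimage $\underline{i-q_m}$ of $U$, it does not meet $\underline 0$, and $\sum_{i<q_m}|f^{i}((x,z))|$ is bounded because it splits into two sums of pairwise disjoint intervals. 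Hence $f^{q_m}$ is a diffeomorphism on $(x,z)$, and the form of Fact~\ref{F1} valid for iterates — which follows from the cross-ratio distortion estimates for $\Poin$ (Theorem~\ref{point}) and for $\Cr$ recalled above — gives, with a universal constant $K$,
\begin{equation*}
\frac{|U|}{|f^{q_m}(G)|+|U|}=\frac{|f^{q_m}(z)-f^{q_m}(y)|}{|f^{q_m}(z)-f^{q_m}(x)|}\ \le\ K\,\frac{|z-y|}{|z-x|}=K\,\frac{|\underline{-q_m}|}{|G|+|\underline{-q_m}|}.
\end{equation*}
Since $|f^{q_m}(G)|$ is bounded by the length of the circle, this rearranges to $|\underline{-q_m}|\ge C\,|G|$ for a universal $C>0$, which is the model inequality and hence, with the reduction above, the proposition.

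I expect the hard part to be the model inequality, that is, the behaviour at the boundary of the flat interval. There $f$ is not invertible — it collapses $\underline 0$ — so no diffeomorphic branch can be continued across $\underline 0$ and the space condition underlying Koebe's inequality degenerates; and since the scalings $\tau_n$ can tend to $0$, no bounded geometry is available a priori, so the argument must genuinely exploit the fixed size of the flat interval. This forces one to work along the whole orbit of length $q_m$ joining $\underline{-q_m}$ to $\underline 0$, and hence to rely on the cross-ratio inequalities, essentially the only tool controlling distortion along so long an orbit by a constant independent of its length. The care required lies in checking, along the entire orbit, the two hypotheses of the $\Cr$-inequality — bounded multiplicity of the outer intervals of the chain and the inner pair never meeting $\underline 0$ — and, if one argues with that inequality directly rather than through the iterated form of Fact~\ref{F1}, in orienting the configuration so that the conclusion says that $\underline{-q_m}$ is not small.
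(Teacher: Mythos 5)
A preliminary caveat: the paper does not actually prove this proposition --- it refers to \cite{5aut}, pp.\ 606--607 --- so I am comparing your proposal with the standard argument rather than with a proof printed here. Your two structural moves are the right ones (pass to the first partition level at which $J$ appears, then transport the configuration to the one abutting $\underline 0$ and exploit that $|U|$ is a fixed positive constant), and your closing diagnosis of where the difficulty sits is accurate. But there are two genuine gaps. The smaller one: to control the distortion of $f^k$ carrying a general gap onto $I^m_0$ you invoke Proposition~\ref{Koebe liv}, whose third hypothesis $|f^k(J)|\le K\,\dist(f^k(J),\partial f^k(T))$ becomes, for the only available choice of $T$ (the gap together with the flanking preimages of $U$), precisely the statement that a gap adjacent to a preimage of $U$ is at most a constant times that preimage --- an instance of the proposition being proved. (Everywhere else in the paper this hypothesis is verified \emph{by citing} Proposition~\ref{figo1}.) This step must instead go through the $\Cr$-inequality, which needs no space condition: take the quadruple whose middle interval is the gap and whose outer intervals are the two preimages of $U$ flanking it; the middle intervals of the chain are gaps of the partition, hence avoid $\underline 0$, and the outer intervals have multiplicity at most $3$. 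So this part is fixable.

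The serious gap is the model inequality, whose entire proof rests on ``the form of Fact~\ref{F1} valid for iterates.'' No such uniform statement is available from the tools you cite: Fact~\ref{F1} concerns a single application of $f$, and composing it $q_m$ times yields $K^{q_m}$; the single-step constant is $1+O(|f^j((x,z))|/\dist(f^j((x,z)),\partial U))$, and the sum of these errors along the orbit is exactly the quantity whose control is the content of the cross-ratio machinery. A three-point ratio anchored at $\partial U$ is not a cross-ratio, so neither Theorem~\ref{point} nor the $\Cr$-inequality applies to it directly; to use them one must adjoin a fourth point, and each natural choice fails. If the fourth interval is $U$ (or anything anchored at $\partial U$ on the other side), its forward images degenerate to the points $\underline j$ and $\Cr$ collapses to $0$; if it is a neighbouring partition element $I'$, the final cross-ratio contains the factor $|f^{q_m}(I')|/(|U|+|f^{q_m}(I')|)$, which is not bounded below a priori. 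The honest recursive version of your argument --- one application of the power law at $\partial U$, turning the three-point ratio at level $m$ into a genuine $\Cr$ over the gap $I^{m-1}_{q_m-1}$, then the $\Cr$-inequality over $q_m-1$ steps --- lands on the same quantity at level $m-1$ but multiplied by a fixed factor of order $|U|e^{-K}/\ell<1$, so iterating it proves nothing; this is precisely the ``accumulation of constants'' phenomenon the introduction warns about. The model inequality is the actual content of the proposition, and your proposal asserts it rather than proves it.
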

\begin{corollary}\label{cor:trouzero}
The lengths of gaps of the dynamical partition $\mathscr{P}_{n}$ tend to zero at least exponentially fast when $n\to\infty$.
\end{corollary}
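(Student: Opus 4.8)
The plan is to show that each gap of the partition $\mathscr P_{n+2}$ occupies at most a fixed fraction $\mu<1$ of the gap of $\mathscr P_n$ that contains it; since the dynamical partitions are nested, iterating this two‑step contraction immediately yields that the maximal gap length of $\mathscr P_n$ decays geometrically in $n$.

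First I would record the nesting. The partition $\mathscr P_{n+2}$ refines $\mathscr P_n$, and every preimage $\underline{-i}$ with $0\le i\le q_{n+1}+q_n-1$ occurring in $\mathscr P_n$ also occurs in $\mathscr P_{n+2}$; hence no gap of $\mathscr P_{n+2}$ can lie inside one of the intervals $\underline{-i}$ of $\mathscr P_n$, so every gap of $\mathscr P_{n+2}$ is contained in a (unique) gap of $\mathscr P_n$. Now the key step. Let $I$ be a long gap of $\mathscr P_n$. By the subdivision formula \eqref{div}, in $\mathscr P_{n+1}$ the interval $I$ is split into $a_{n+2}$ preimages of $U$ — each of which belongs to $\mathscr P_{n+1}$, since $i+q_n+jq_{n+1}\le q_{n+1}-1+q_{n+2}= q_{n+2}+q_{n+1}-1$ for $j\le a_{n+2}$ — alternating with $a_{n+2}+1$ gaps, arranged (cf. Figure \ref{fig:primera}) so that every one of these gaps is adjacent to one of those preimages of $U$. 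Fix such a gap $G$ and an adjacent preimage $J\subset I$ of $U$. Applying Proposition \ref{figo1} at level $n+1$ gives $|J|\ge C|G|$, and since $G$ and $J$ are disjoint subintervals of $I$ we get $|I|\ge|J|+|G|\ge(1+C)|G|$, i.e. $|G|\le\mu|I|$ with $\mu:=\frac{1}{1+C}<1$. A short gap of $\mathscr P_n$ is a long gap of $\mathscr P_{n+1}$, so the same estimate applied one level later shows that every gap of $\mathscr P_{n+2}$ contained in any gap $I$ of $\mathscr P_n$ has length at most $\mu|I|$.

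Combining the two observations: if $g_n$ denotes the maximal length of a gap of $\mathscr P_n$, then $g_{n+2}\le\mu g_n$, hence $g_n\le C'\mu^{\lfloor n/2\rfloor}$, which is the claimed at‑least‑exponential decay. The only genuinely delicate point is the combinatorial input, namely that in \eqref{div} the preimages of $U$ and the gaps alternate so that each gap is flanked by a preimage of $U$ lying inside $I$, and that these preimages belong to $\mathscr P_{n+1}$ so that Proposition \ref{figo1} applies; both are read off from the structure of the dynamical partitions and are routine once the indexing in \eqref{div} is checked carefully.
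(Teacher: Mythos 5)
Your argument is correct and is essentially the standard one: the paper itself does not reprove this corollary but refers to \cite{5aut} (pp.~606--607), where the decay is obtained exactly as you do, by using Proposition \ref{figo1} together with the subdivision structure \eqref{div} to show that each gap of $\mathscr P_{n+2}$ loses a definite fraction of the gap of $\mathscr P_{n}$ containing it. Your verification that the intermediate preimages of $U$ indeed belong to $\mathscr I_{n+1}$, and the handling of short gaps by viewing them as long gaps of $\mathscr P_{n+1}$, are the right points to check and are handled correctly.
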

The proofs of Proposition \ref{figo1} and Corollary \ref{cor:trouzero} can be found in \cite{5aut}, pag. 606-607.
\paragraph{Standing assumption.} In the following we will always work with functions in $\L$ which have critical exponent $\ell>1$ and irrational rotation number.

\section{Proof of Theorem \ref{sw1}}
The first claim of Theorem \ref{sw1} is proved in \cite{my} under the additional assumption that the rotation number is of bounded type. For the general case, the details are provided in the Appendix. We proceed now with the proof of the second claim of Theorem \ref{sw1}.

\subsection{Some Technical Lemmas}

We present some technical lemmas which we need for the proof of the main theorem.
\paragraph{Standing assumption.} Because of the symmetry of the functions in $\L$ we always assume that $n \in \N$ is even. The case $n \in \N$ odd is completely analogous.

\begin{lemma}\label{prim}
There exists a constant $K > 0$, such that the fraction
\begin{equation*}
\frac{|(\underline{2q_{n+1}},\underline{q_{n+1}})|}{|(\underline{2q_{n+1}},\underline{0})|} > K > 0.
\end{equation*}
\end{lemma}

\begin{proof}
See Lemma $1.2$ in \cite{5aut}.
\end{proof}

\begin{lemma}\label{altro}
There exists a constant $K>0$, such that for $n$ large enough the fraction
\begin{equation*}
\frac{|\underline{-q_{n}-q_{n+1}}|}{|[\underline{-q_{n}-q_{n+1}},\underline{0})|}>K>0.
\end{equation*}
\end{lemma}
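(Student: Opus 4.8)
The plan is to control the interval $\underline{-q_n-q_{n+1}}$ by relating it, via the structure of the dynamical partition, to the preimage $J=f^{-(q_n+q_{n+1})}(U)$ of the flat interval appearing in $\mathscr{P}_n$, and then to bring in Proposition \ref{figo1} and Fact \ref{F1}. Concretely, $\underline{-q_n-q_{n+1}}$ is exactly such a $J$: it is the preimage of $U$ obtained by pulling back the short gap relation $q_n+q_{n+1}$ times, and by the description of $\mathscr{P}_n$ (in particular equation \eqref{div} and the fact that short gaps of $\mathscr{P}_n$ become long gaps of $\mathscr{P}_{n+1}$) it sits adjacent to a gap $I$ of the partition. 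Proposition \ref{figo1} then gives $|J|/|I|\geq C$, so it suffices to compare $|I|$ (or the total length $|[\underline{-q_n-q_{n+1}},\underline 0)|$) with $|\underline{-q_n-q_{n+1}}|$.

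First I would identify the correct gap $I$ and the correct diffeomorphic branch. Since $n$ is even, $\underline{-q_n-q_{n+1}}$ lies on a definite side of $\underline 0$, and the chain of intervals $\underline{0},\underline{-q_{n+1}},\underline{-q_n-q_{n+1}},\dots$ is monotone, so $[\underline{-q_n-q_{n+1}},\underline 0)$ decomposes into $\underline{-q_n-q_{n+1}}$, the gap(s) between consecutive preimages, and $\underline 0$ itself. The key point is that the map $f^{q_n+q_{n+1}}$ is a diffeomorphism on a neighborhood containing the pullback of $[\underline 0,\underline{something}]$ — more precisely, the branch sending $\underline{-q_n-q_{n+1}}$ to $\underline 0$ — because none of the intermediate iterates hits the flat interval. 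Then I would push the estimate forward: comparing $|\underline{-q_n-q_{n+1}}|$ with $|[\underline{-q_n-q_{n+1}},\underline 0)|$ is, after applying $f^{q_n+q_{n+1}}$, the same as comparing $|U|$ with the length of its image-side neighbor, for which one uses Lemma \ref{prim}-type estimates together with Fact \ref{F1} to transfer the bounded ratio back through the diffeomorphism (with the sum of lengths of the intermediate iterates bounded, as needed to apply Proposition \ref{Koebe liv} or Fact \ref{F1}).

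An alternative, probably cleaner route: use equation \eqref{div} directly. In $\mathscr{P}_{n-2}$ the long gap $I_0^{n-2}$ splits, among other pieces, into preimages of $U$ of the form $f^{-q_{n-2}-jq_{n-1}}(U)$ and long gaps of $\mathscr{P}_{n-1}$, plus the short gap $I_0^{n}$ which is the interval between $\underline 0$ and $\underline{-q_n}$; iterating one more step of the refinement shows how $\underline{-q_n-q_{n+1}}$ and the gap next to it are nested inside a controlled gap of an earlier partition whose ratio to the whole is bounded by Proposition \ref{figo1} and Corollary \ref{cor:trouzero}. The statement $|\underline{-q_n-q_{n+1}}|/|[\underline{-q_n-q_{n+1}},\underline 0)|>K$ is then a combination of: (i) $\underline{-q_n-q_{n+1}}$ is a definite fraction of its adjacent gap (Proposition \ref{figo1}), and (ii) that adjacent gap together with everything between it and $\underline 0$ is, by the partition geometry near $\underline 0$ and Lemma \ref{prim}, a bounded multiple of $|\underline{-q_n-q_{n+1}}|$.

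The main obstacle I expect is (ii): controlling the total length $|(\underline{-q_n-q_{n+1}},\underline 0)|$ from above in terms of $|\underline{-q_n-q_{n+1}}|$ uniformly in $n$, since this interval contains $\underline{-q_{n+1}}$ and several gaps, and a naive summation loses the uniform constant. The way around it is to realize the whole configuration as the pullback under a single diffeomorphic branch $f^{q_n+q_{n+1}}$ of the fixed configuration of $\underline 0$ and its neighbors — whose geometry is controlled by Lemma \ref{prim} and Fact \ref{F1} — checking carefully that this branch is indeed a diffeomorphism (no intermediate image meets $\underline 0$) and that $\sum_i |f^i(\cdot)|$ along the orbit is bounded so that distortion is under control. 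Once the diffeomorphic-branch picture is set up, the inequality follows by transporting the bounded ratio on the image side back to the source.
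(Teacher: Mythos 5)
Your overall plan---get past the critical point with Fact \ref{F1}, transport the ratio forward along the orbit to a configuration adjacent to $\underline{0}$, and finish with Proposition \ref{figo1}---is the right outline, and it is what the paper does. But the proposal is missing the one idea that makes the transport possible, and the substitutes you offer for it do not work. The paper applies Fact \ref{F1} exactly once, to replace the ratio by $|\underline{-q_{n}-q_{n+1}+1}|/|[\underline{-q_{n}-q_{n+1}+1},\underline{1})|$; it then bounds this below by
\begin{equation*}
\frac{|\underline{-q_{n}-q_{n+1}+1}|}{|[\underline{-q_{n}-q_{n+1}+1},\underline{-q_{n+1}+1})|}\cdot\frac{|\underline{-q_{n+1}+1}|}{|(\underline{-q_{n}-q_{n+1}+1},\underline{-q_{n+1}+1}]|},
\end{equation*}
which is precisely $\Cr$ of the quadruple formed by the endpoints of the two preimage intervals $\underline{-q_{n}-q_{n+1}+1}$ and $\underline{-q_{n+1}+1}$. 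The $\Cr$-distortion inequality (middle intervals avoid $\underline{0}$, bounded overlap) then carries this quantity through $f^{q_{n+1}-1}$ to the quadruple $(\underline{-q_n},\underline{0})$, where Proposition \ref{figo1} applies. The point is that the ``space'' needed for bounded distortion is supplied by the two preimage intervals themselves, on both sides of the gap.

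Neither of the tools you propose for the transport step can replace this. Fact \ref{F1} is a single-iterate estimate whose constant compounds, so it cannot be used along the $q_{n+1}-1$ (let alone $q_n+q_{n+1}-1$) remaining iterates. Proposition \ref{Koebe liv} requires $|f^m(J)|\leq K\dist(f^m(J),\partial f^m(T))$, i.e.\ Koebe space around the image of $J$ inside the image of $T$; here one endpoint of your $J$ is the boundary of $U$, and any $T$ reaching past that endpoint enters the flat interval, where $f^{q_n+q_{n+1}}$ is not a diffeomorphism and the image of $T$ gains no length on that side---so the hypothesis fails and there is no space on the $U$-side. Your second route has the same hole in a different place: you acknowledge that $(\underline{-q_n-q_{n+1}},\underline{0})$ contains the $a_{n+2}-1$ preimages $\underline{-q_n-jq_{n+1}}$, $j\geq 2$, and their gaps, and since $a_{n+2}$ is unbounded (this paper makes no bounded-type assumption), a gap-by-gap application of Proposition \ref{figo1} cannot give a uniform constant. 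So the proposal as written does not close; the missing ingredient is the cross-ratio $\Cr$ applied to the quadruple built from the two adjacent preimage intervals.
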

\begin{proof}
The reader can keep in mind Figure \ref{fig:zoomsurq}.\\

By Fact \ref{F1} there exists a constant $K_1 > 0$ such that

\begin{eqnarray*}
\frac{|\underline{-q_{n}-q_{n+1}}|}{|[\underline{-q_{n}-q_{n+1}},\underline{0})|} &\geq& K_1\frac{|\underline{-q_{n}-q_{n+1}+1}|}{|[\underline{-q_{n}-q_{n+1}+1},\underline{1})|}\geq \\
&\geq& K_1\frac{|\underline{-q_{n}-q_{n+1}+1}|}{|[\underline{-q_{n}-q_{n+1}+1},\underline{-q_{n+1}+1})|}\frac{|\underline{-q_{n+1}+1}|}{|(\underline{-q_{n}-q_{n+1}+1},\underline{-q_{n+1}+1}]|}.
\label{for}
\end{eqnarray*}

\par
We apply $f^{q_{n+1}-1}$. By the properties of distortion of cross-ratio $\mathbf{Cr}$, there exists a positive constant $K_2$ such that:
\begin{eqnarray}\label{for1}
\frac{|\underline{-q_{n}-q_{n+1}}|}{|[\underline{-q_{n}-q_{n+1}},\underline{0})|} &\geq&K_1 K_2\frac{|\underline{-q_{n}}|}{|[\underline{-q_{n}},\underline{0})|}\frac{|\underline{0}|}{|(\underline{-q_{n}},\underline{0}]|}\geq K_1 K_2\frac{|\underline{-q_{n}}|}{|[\underline{-q_{n}},\underline{0})|}.
\end{eqnarray}
\par
For $n$ large enough we can discard the intervals containing $\underline{0}$ and using Proposition \ref{figo1} the proof is complete.
\end{proof}
 \begin{figure}[h]
		\begin{center}
		\includegraphics[width=15.5cm]{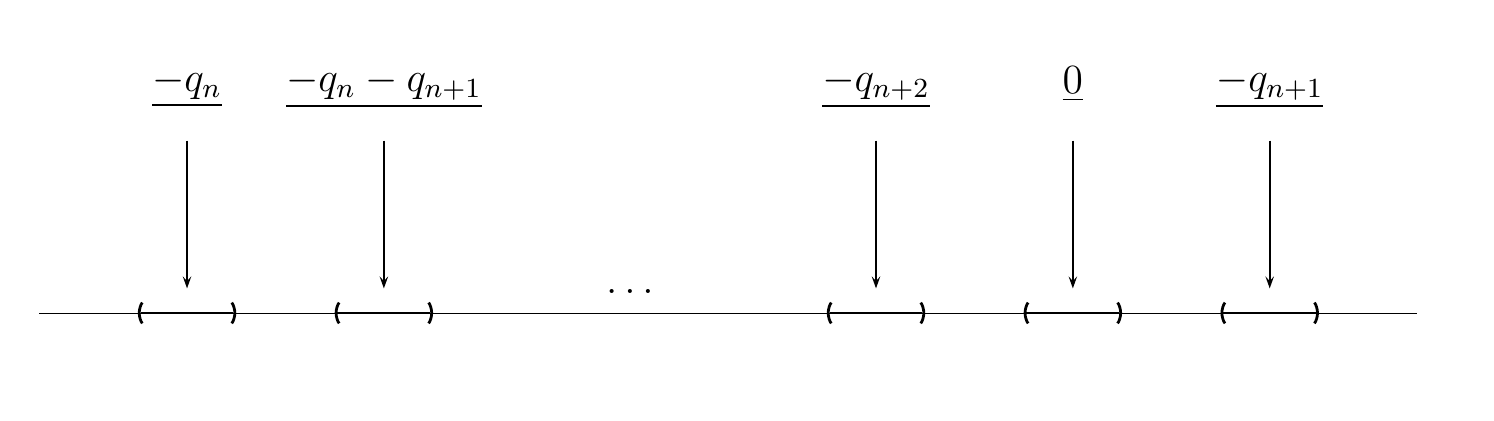}
		\caption{}
             \label{fig:zoomsurq}
		\end{center}
\end{figure}
%
%
%
%
%
%
%
%
%
%
%
%

\begin{lemma}\label{L1}
There exists a constant $K>0$ such that, for $n$ large enough,
\begin{equation*}
\frac{ |\underline{-q_{n-1}-q_n+1}|}{|(\underline{-q_n+1},\underline{1})|}\geq K.
 \end{equation*}
\end{lemma}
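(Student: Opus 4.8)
\textbf{Proof proposal for Lemma \ref{L1}.}

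The plan is to follow the same template as in Lemma \ref{altro}: pull the configuration back toward the flat interval by applying an appropriate iterate of $f$ on which $f$ is a diffeomorphism, control the distortion by the cross-ratio tools of Section \ref{distortion}, and then invoke Proposition \ref{figo1} (and possibly Fact \ref{F1} and Lemma \ref{prim}) to bound the resulting ratio of adjacent partition elements from below. The intervals $\underline{-q_{n-1}-q_n+1}$ and $(\underline{-q_n+1},\underline{1})$ sit in the dynamical partition $\mathscr{P}_{n-1}$ near $\underline{1}$, so the natural move is to apply $f^{q_n-1}$, which is a diffeomorphism on a neighborhood of $\underline{-q_n+1}$ (its orbit up to time $q_n-1$ avoids $\underline 0$), carrying $\underline{-q_n+1}$ to $\underline 0$, $\underline{1}$ to $\underline{q_n}$, and $\underline{-q_{n-1}-q_n+1}$ to $\underline{-q_{n-1}}$. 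This should reduce the claimed estimate to a lower bound on
\begin{displaymath}
\frac{|\underline{-q_{n-1}}|}{|(\underline 0,\underline{q_n})|},
\end{displaymath}
up to a uniform multiplicative constant coming from cross-ratio distortion.

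First I would set up the chain of quadruples needed to apply the $\mathbf{Cr}$-distortion result: take the quadruple whose inner interval is $(\underline{-q_{n-1}-q_n+1},\underline{-q_n+1})$-ish data, with outer points chosen so that the inner interval stays disjoint from $\underline 0$ and the outer intervals have bounded multiplicity along the orbit under $f^{q_n-1}$ — this is exactly the setting in which the constant $K$ of the $\mathbf{Cr}$-inequality in Section \ref{distortion} applies. Alternatively, as in the proof of Lemma \ref{altro}, I would first peel off one iterate using Fact \ref{F1} (since the relevant endpoint is the one closest to the flat interval) to pass to the $\underline{\cdot+1}$ indices, then apply $f^{q_n-1}$ via the cross-ratio estimate. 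After the pullback, the problem becomes: bound $|\underline{-q_{n-1}}| / |(\underline 0,\underline{q_n})|$ below. Here $\underline{-q_{n-1}}$ is a preimage of $U$ belonging to $\mathscr{P}_{n-2}$ (or $\mathscr{P}_{n-1}$), and $(\underline 0,\underline{q_n})$ is comparable to the gap $I_0^{n-1}$ adjacent to $\underline 0$; since $\underline 0$ is itself adjacent to (a preimage-chain leading to) $\underline{-q_{n-1}}$ in the partition structure described around equation \eqref{div}, Proposition \ref{figo1} gives the needed uniform lower bound, possibly after one more application of Fact \ref{F1} to move $|(\underline 0,\underline{q_n})|$ to a full partition interval. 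Lemma \ref{prim} may be used to replace $|(\underline 0,\underline{q_n})|$ by $|(\underline{2q_{n+1}},\underline 0)|$-type quantities if the direct comparison is awkward.

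The main obstacle I anticipate is ensuring that the orbit segment used in the pullback genuinely satisfies the hypotheses of the cross-ratio machinery uniformly in $n$ when the rotation number is unbounded: the bounded-multiplicity condition (each point of the circle in at most $k$ of the outer intervals) and the condition that the inner intervals avoid $\underline 0$ must hold with constants independent of $a_n$. This is delicate precisely because a large partial quotient $a_n$ makes the orbit $\{f^i(\underline{-q_n+1})\}_{i=0}^{q_n-1}$ long and forces many of its elements into the long gap $I_0^{n-2}$; one must check that these still have bounded overlap, which follows from the combinatorial structure of $\mathscr{P}_{n-2}$ refining into $\mathscr{P}_{n-1}$ via \eqref{div}, but requires care. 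A secondary technical point is the ``for $n$ large enough'' clause: near $\underline 0$ one discards the shrinking intervals containing $\underline 0$ using Corollary \ref{cor:trouzero}, exactly as at the end of Lemma \ref{altro}, and this is harmless in the limit.
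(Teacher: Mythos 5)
Your plan is essentially the paper's proof: pull the configuration back by $f^{q_n-1}$, control the distortion with the $\mathbf{Cr}$ chain whose inner interval is the gap $(\underline{-q_n+1},\underline{-q_{n-1}-q_n+1})$ between the two preimages of $U$, and finish with Proposition \ref{figo1}. The only (cosmetic) difference is that instead of tracking $\underline{1}$ forward to $\underline{q_n}$, the paper first enlarges the denominator to $|(\underline{-q_n+1},\underline{-q_{n-1}-q_n+1}]|$ — legitimate since $\underline{1}$ lies in that interval — so that the quantity to be bounded is exactly a cross-ratio of the two adjacent preimage intervals, and neither Fact \ref{F1} nor Lemma \ref{prim} is needed.
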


\begin{proof}
The reader can keep in mind Figure \ref{fig:zoomsurq1}.\\

We have:
\begin{eqnarray}
\frac{|\underline{-q_{n-1}-q_n+1}|}{|(\underline{-q_n+1},\underline{1})|} &\geq& \frac{|\underline{-q_{n-1}-q_n+1}|}{|(\underline{-q_n+1},\underline{-q_{n-1}-q_n+1}]|} \\
&\geq& \frac{|\underline{-q_{n}+1}|}{|[\underline{-q_n+1},\underline{-q_{n-1}-q_n+1})|}\frac{|\underline{-q_{n-1}-q_n+1}|}{|(\underline{-q_n+1},\underline{-q_{n-1}-q_n+1}]|}. \nonumber
\label{zuc}
\end{eqnarray}
After $(q_{n}-1)$ iterates, by the distortion properties of cross-ratio $\mathbf{Cr}$, there exists a constant $K_1>0$ such that:
\begin{equation*}
\frac{|\underline{-q_{n-1}-q_n+1}|}{|(\underline{-q_n+1},\underline{1})|} \geq K_1\frac{|\underline{0}|}{|[\underline{0},\underline{-q_{n-1}})|}\frac{|\underline{-q_{n-1}}|}{|(\underline{0},\underline{-q_{n-1}}]|}
\end{equation*}
which, for $n$ large enough, is bounded below by a positive constant (Proposition \ref{figo1}).
\end{proof}
 \begin{figure}[h]
		\begin{center}
		\includegraphics[width=15.5cm]{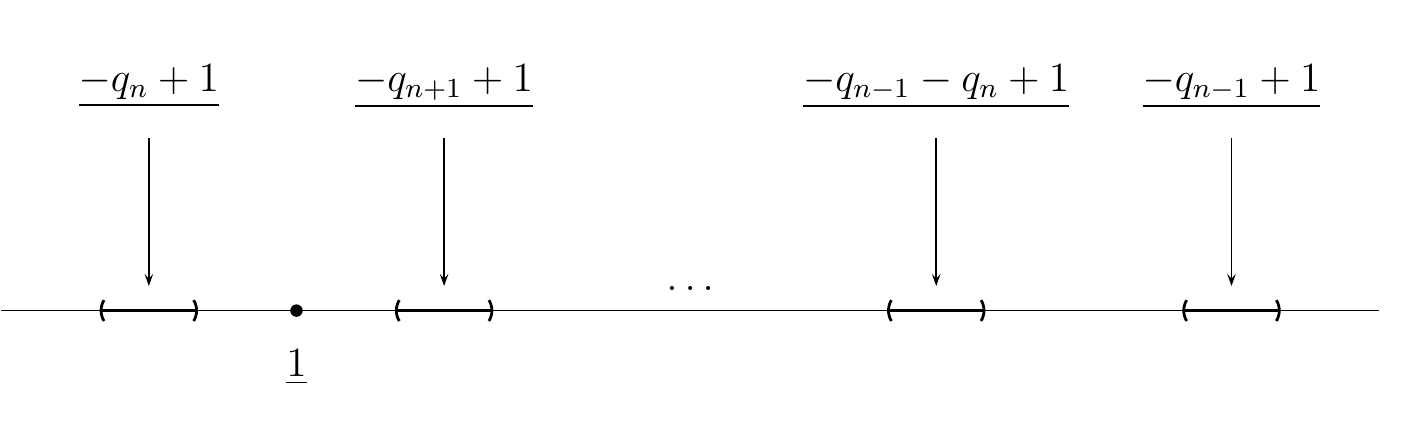}
		\caption{}
            \label{fig:zoomsurq1}
		\end{center}
\end{figure}
%
%
%
%
%
%
%
%

For all $n$ and for all $i\in\{0,\dots, a_{n+2}-1\}$ we define (see Figure \ref{fig:bg}):
 \begin{figure}[h]
		\begin{center}
		\includegraphics[width=15.5cm]{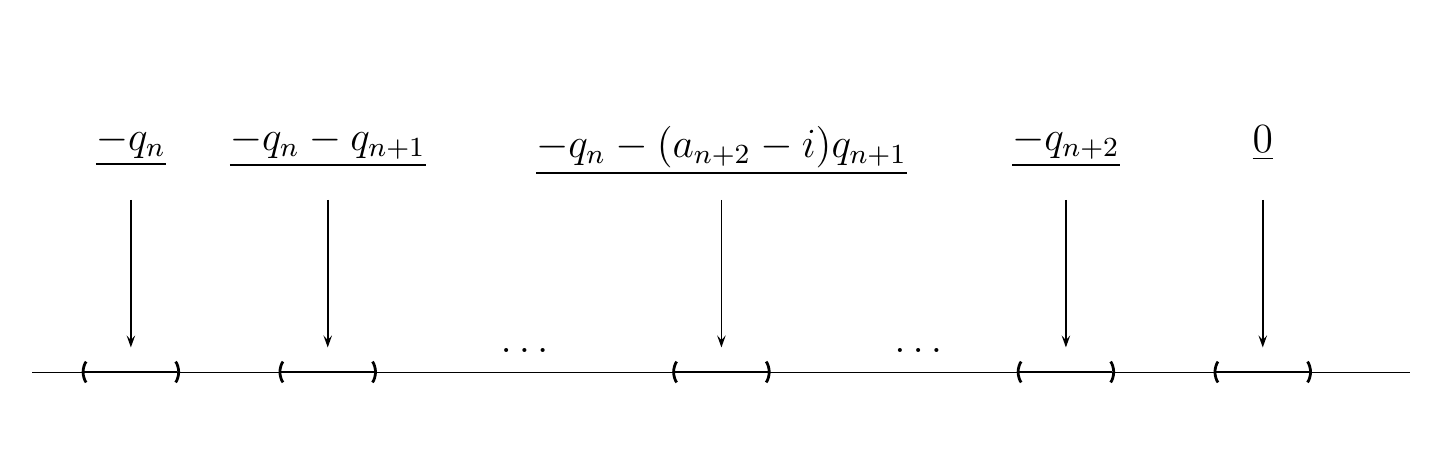}
		\caption{}
             \label{fig:bg}
		\end{center}
\end{figure}
%
%
%
%
%
%
%
%
%
%
%
%

\begin{equation*}
\beta_n(i)=\frac{|(\underline{-q_n-(a_{n+2}-i)q_{n+1}},\underline{0})|}{|[\underline{-q_n-(a_{n+2}-i)q_{n+1}},\underline{0})|}
\end{equation*}
and
\begin{equation*}
\gamma_n(i)=\frac{|[\underline{-q_n-(a_{n+2}-i)q_{n+1}},\underline{0})|}{|(\underline{-q_n-(a_{n+2}-(i+1))q_{n+1}},\underline{0})|}
\end{equation*}
and we prove the following lemma:

\begin{lemma}\label{beta}
There exists a constant $K > 0$, such that for all
$ i \in \{ 1, \dots, a_{n+2} - 2 \}$, we have:
\begin{equation*}
(\beta_n(i))^\ell\geq K\beta_n(i+1).
\end{equation*}
\end{lemma}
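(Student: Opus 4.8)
The plan is to pass from $\beta_n(i)$ to $\beta_n(i+1)$ by applying the iterate $f^{q_{n+1}}$, decomposed as a single \emph{singular} step at the boundary of $\underline 0$ — which is where the exponent $\ell$ enters — followed by $q_{n+1}-1$ diffeomorphic steps on which the distortion is uniformly controlled. Throughout write $m_i:=q_n+(a_{n+2}-i)q_{n+1}$, so that $\underline{-m_i}$ is the interval appearing in the statement and $m_i-q_{n+1}=m_{i+1}$; put $s:=|(\underline{-m_i},\underline 0)|$ and $t:=|\underline{-m_i}|$, so that $\beta_n(i)=\frac{s}{s+t}$. First I would dispose of the case $s\ge t$: then $\beta_n(i)\ge\frac12$, and since $\beta_n(i+1)<1$ the inequality holds with $K=2^{-\ell}$; so from now on assume $s\le t$. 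For $n$ large the interval $[\underline{-m_i},a]$, with $a$ the endpoint of $\underline 0$ adjacent to $\underline{-m_i}$, has length at most $|I^n_0|\to0$ (Corollary \ref{cor:trouzero}) and hence lies in the one-sided neighbourhood of $a$ on which $f$ has the normal form $x\mapsto (a-x)^{\ell}$, with $f(a)$ the point $\underline 1$. Since the two endpoints of $\underline{-m_i}$ are at distances $s$ and $s+t$ from $a$, its image $f(\underline{-m_i})=\underline{-(m_i-1)}$ satisfies $|f(\underline{-m_i})|=(s+t)^{\ell}-s^{\ell}$ and $|(f(\underline{-m_i}),\underline 1)|=s^{\ell}$, so
\[
\frac{|(f(\underline{-m_i}),\underline 1)|}{|[f(\underline{-m_i}),\underline 1)|}=\frac{s^{\ell}}{(s+t)^{\ell}}=(\beta_n(i))^{\ell},
\]
and, since $u\mapsto(1+u)^{\ell}-u^{\ell}$ is bounded above and below on $[0,1]$ and $s/t\le1$, also $|f(\underline{-m_i})|\asymp t^{\ell}$.

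Next I would transport this by $f^{q_{n+1}-1}$. As $(m_i-1)-(q_{n+1}-1)=m_{i+1}$ and $f^{q_{n+1}-1}(\underline 1)=\underline{q_{n+1}}$, the map $f^{q_{n+1}-1}$ sends $\underline{-(m_i-1)}\mapsto\underline{-m_{i+1}}$ and $\underline 1\mapsto\underline{q_{n+1}}$. The closed interval $f([\underline{-m_i},a])$ containing $(\underline{-(m_i-1)},\underline 1)$ has the property that $f^k([\underline{-m_i},a])\cap\underline 0=\emptyset$ for $1\le k\le q_{n+1}$: otherwise $[\underline{-m_i},a]$ would meet the preimage $\underline{-k}$, impossible since $[\underline{-m_i},a]\subset\overline{I^n_0}$ and $\overline{I^n_0}$ is disjoint from all $\underline{-k}$, $k\le q_{n+1}+q_n-1$. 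By standard facts about the dynamical partition the forward iterates $f^k(\overline{I^n_0})$, $0\le k<q_{n+1}$, cover the circle with uniformly bounded multiplicity and have bounded total length, so a chain of quadruples built over $f^k(f([\underline{-m_i},a]))$, $0\le k\le q_{n+1}-1$, satisfies the hypotheses of the $\mathbf{Cr}$-distortion inequality (and of Theorem \ref{point} for $\mathbf{Poin}$, resp.\ of Proposition \ref{Koebe liv}, with the space provided by Proposition \ref{figo1} and Lemmas \ref{prim}, \ref{altro}). Hence $f^{q_{n+1}-1}$ distorts length ratios on $f([\underline{-m_i},a])$ by at most a uniform multiplicative factor, and in particular
\[
(\beta_n(i))^{\ell}\;\asymp\;\frac{|(\underline{-m_{i+1}},\underline{q_{n+1}})|}{|[\underline{-m_{i+1}},\underline{q_{n+1}})|}.
\]

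Finally I would replace the base point $\underline{q_{n+1}}$ by $\underline 0$. By \eqref{div}, for $i\ge1$ the point $\underline{q_{n+1}}$ lies in the gap of the dynamical partition between $\underline{-m_1}$ and $\underline{-q_{n+2}}$, hence between $\underline{-m_i}$ and $\underline 0$; and $\underline{-m_i}$ is contained in $(\underline{-m_{i+1}},\underline{q_{n+1}})$, since $\underline{-m_{i+1}}$ lies farther from $\underline 0$ than $\underline{-m_i}$. Therefore
\[
|(\underline{q_{n+1}},\underline 0)|\le|(\underline{-m_i},\underline 0)|=s\le t=|\underline{-m_i}|\le|(\underline{-m_{i+1}},\underline{q_{n+1}})|,
\]
so moving the base point enlarges the numerator and the denominator of the last fraction by at most a factor $2$, giving
\[
\beta_n(i+1)=\frac{|(\underline{-m_{i+1}},\underline 0)|}{|[\underline{-m_{i+1}},\underline 0)|}\;\asymp\;\frac{|(\underline{-m_{i+1}},\underline{q_{n+1}})|}{|[\underline{-m_{i+1}},\underline{q_{n+1}})|}\;\asymp\;(\beta_n(i))^{\ell}.
\]
Together with the case $s\ge t$ this yields $(\beta_n(i))^{\ell}\ge K\beta_n(i+1)$ for a uniform $K>0$ and all large $n$; the finitely many remaining $n$ with $a_{n+2}\ge3$ are absorbed by shrinking $K$.

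I expect the genuinely delicate point to be the distortion estimate in the second step: it has to be uniform in $i$ and in $n$, hence also in the unbounded partial quotients, and verifying the bounded-multiplicity hypothesis of the chain of quadruples together with the bound on the summed lengths of its intermediate steps is exactly the sort of ``accumulation of constants'' flagged in the introduction — this is where Proposition \ref{figo1}, Lemmas \ref{prim} and \ref{altro}, and Corollary \ref{cor:trouzero} do the real work. The singular step and the final change of base point, by contrast, are essentially elementary once this control is available.
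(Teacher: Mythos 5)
Your proposal follows the same three-step strategy as the paper's proof: a single singular application of $f$ in the normal form at the boundary of $\underline 0$ to produce the exponent $\ell$, transport of the resulting ratio by $f^{q_{n+1}-1}$ with distortion controlled by Proposition \ref{Koebe liv} (disjointness of the intermediate images plus Koebe space from Proposition \ref{figo1}), and a final change of reference point from $\underline{q_{n+1}}$ back to $\underline 0$. The only, harmless, deviation is in that last step, where the paper absorbs $|(\underline{q_{n+1}},\underline 0)|$ by invoking Lemma \ref{prim}, while you do it by the direct comparison $|(\underline{q_{n+1}},\underline 0)|\le s\le t\le|(\underline{-m_{i+1}},\underline{q_{n+1}})|$ after splitting off the trivial case $s\ge t$.
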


We observe that this lemma makes sense under the assumption that $a_{n+2} \notin \{1,2\}$.

\begin{proof}
We apply $f$ to the intervals defining $\beta_n(i)$ and we obtain, for large $n$,
\begin{equation*}
\beta_n(i)^\ell=\frac{|(\underline{-q_n-(a_{n+2}-i)q_{n+1}+1},\underline{1})|}{|[\underline{-q_n-(a_{n+2}-i)q_{n+1}+1},\underline{1})|}.
\end{equation*}
\par
For all $i\in\{1,\dots, a_{n+2}-2\}$  we apply Proposition \ref{Koebe liv} to

\begin{itemize}
\item[-]$T=[\underline{-q_{n}-q_{n+1}+1}, \underline{-q_{n+1}+1}]$,
\item[-]$J=(\underline{-q_{n}-q_{n+1}+1}, \underline{-q_{n+1}+1})$,
\item[-]$f^{q_{n+1}-1}$.
\end{itemize}

We notice that the hypotheses are satisfied:

\begin{itemize}
\item[-] $f^{q_{n+1}-1}$ is a diffeomorphism on $T$,
\item[-] the intervals $f^j(J)$ for $j\in\left\{1,\dots,q_{n+1}-2\right\}$ are disjoint,
\item[-] by Proposition \ref{figo1},  for $n$ large enough, there exists a positive  constant $K_1$ such that
\begin{displaymath}
\left|f^{q_{n+1}-1}(J)\right|=\left|(\underline{-q_n},\underline{0})\right|\leq K_1 \left|\underline{-q_{n}}\right|=K_1 \dist(f^{q_{n+1}-1}(J),\partial f^{q_{n+1}-1}(T)).
\end{displaymath}
\end{itemize}
Then we find a uniform constant $K_2>0$ such that:
\begin{eqnarray}
\beta_n(i)^\ell&=&\frac{|(\underline{-q_n-(a_{n+2}-i)q_{n+1}+1},\underline{1})|}{|[\underline{-q_n-(a_{n+2}-i)q_{n+1}+1},\underline{1})|}\\&\geq& K_2\frac{|(\underline{-q_n-(a_{n+2}-(i+1))q_{n+1}},\underline{q_{n+1}})|}{|[\underline{-q_n-(a_{n+2}-(i+1))q_{n+1}},\underline{q_{n+1}})|}.
\label{beaux}
\end{eqnarray}
Since the numerator of (\ref{beaux}) contains the interval $(\underline{2q_{n+1}},\underline{q_{n+1}})$, for Lemma \ref{prim} we can conclude that $(\beta_n(i))^\ell$ is greater than a positive constant multiplied by $\beta_n(i+1)$.
\end{proof}

\begin{lemma}\label{gamma}
There exist two constant $K_1 > 0$ and $K_2 > 0$ such that:
\begin{enumerate}
\item for all $0\leq i\leq a_{n+2}-2$, $(\gamma_n(i))^\ell\geq K_1\gamma_n(i+1)$,
\item $\gamma_n( a_{n+2}-1)\geq K_2$.
\end{enumerate}
\end{lemma}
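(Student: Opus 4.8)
The plan is to prove the two assertions in parallel with the arguments already given for Lemmas~\ref{altro} and \ref{beta}; the only genuinely new ingredient is the location of the forward image $\underline{q_{n+1}}$ inside the long gap $I_0^n=(\underline{-q_n},\underline 0)$. Since the order of an orbit of $f$ is the order of an orbit of $R_\rho$, the point $\underline{q_{n+1}}$ lies strictly between $\underline{-q_n-(a_{n+2}-1)q_{n+1}}$ and $\underline{-q_{n+2}}$, hence strictly between $\underline{-q_n-(a_{n+2}-(i+1))q_{n+1}}$ and $\underline 0$ for every $i\in\{0,\dots,a_{n+2}-2\}$.

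The second assertion is the base case. I would write the denominator of $\gamma_n(a_{n+2}-1)$ as $|(\underline{-q_n},\underline 0)|=|G|+|[\underline{-q_n-q_{n+1}},\underline 0)|$, with $G$ the gap of $\mathscr P_{n+1}$ between $\underline{-q_n}$ and $\underline{-q_n-q_{n+1}}$. As $\underline{-q_n-q_{n+1}}$ is a preimage of $U$ belonging to $\mathscr P_{n+1}$ and $G$ is one of its adjacent gaps, Proposition~\ref{figo1} gives $|G|\le C^{-1}|\underline{-q_n-q_{n+1}}|\le C^{-1}|[\underline{-q_n-q_{n+1}},\underline 0)|$, so that
\[
\gamma_n(a_{n+2}-1)=\left(1+\frac{|G|}{|[\underline{-q_n-q_{n+1}},\underline 0)|}\right)^{-1}\ge\frac{1}{1+C^{-1}}=:K_2 .
\]

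For the first assertion I would reproduce the three steps of Lemma~\ref{beta}. (a) Both intervals defining $\gamma_n(i)$ are anchored at the boundary of $U$, where $f$ is $(\cdot)^\ell$, so for $n$ large applying $f$ raises the ratio to the power $\ell$:
\[
\gamma_n(i)^\ell=\frac{|[\underline{-q_n-(a_{n+2}-i)q_{n+1}+1},\underline 1)|}{|(\underline{-q_n-(a_{n+2}-(i+1))q_{n+1}+1},\underline 1)|}.
\]
(b) Push this forward by $f^{q_{n+1}-1}$ with Proposition~\ref{Koebe liv}, using exactly the $T=[\underline{-q_n-q_{n+1}+1},\underline{-q_{n+1}+1}]$ and $J=(\underline{-q_n-q_{n+1}+1},\underline{-q_{n+1}+1})$ of Lemma~\ref{beta} (the Koebe-space condition $|f^{q_{n+1}-1}(J)|=|(\underline{-q_n},\underline 0)|\le K|\underline{-q_n}|$ is again Proposition~\ref{figo1}). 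Since $f^{q_{n+1}-1}$ maps $\underline 1$ to $\underline{q_{n+1}}$ and $\underline{-m+1}$ to $\underline{-m+q_{n+1}}$, this produces a uniform $K'>0$ with
\[
\gamma_n(i)^\ell\ \ge\ K'\,\frac{|[\underline{-q_n-(a_{n+2}-(i+1))q_{n+1}},\underline{q_{n+1}})|}{|(\underline{-q_n-(a_{n+2}-(i+2))q_{n+1}},\underline{q_{n+1}})|}.
\]
(c) The right-hand fraction is $\gamma_n(i+1)$ with the common endpoint $\underline 0$ replaced by $\underline{q_{n+1}}$. Setting $X=|[\underline{-q_n-(a_{n+2}-(i+1))q_{n+1}},\underline{q_{n+1}})|$, $Y=\dist(\underline{q_{n+1}},\underline 0)$, and $Z$ the length of the gap between $\underline{-q_n-(a_{n+2}-(i+2))q_{n+1}}$ and $\underline{-q_n-(a_{n+2}-(i+1))q_{n+1}}$, the location of $\underline{q_{n+1}}$ gives $\gamma_n(i+1)=\frac{X+Y}{Z+X+Y}$ while the displayed fraction equals $\frac{X}{Z+X}$, whence
\[
\frac{\gamma_n(i+1)}{X/(Z+X)}=\frac{(X+Y)(Z+X)}{X(Z+X+Y)}\le\frac{Z+X}{X}=1+\frac{Z}{X}\le 1+C^{-1},
\]
the last step being Proposition~\ref{figo1} once more ($X\ge|\underline{-q_n-(a_{n+2}-(i+1))q_{n+1}}|\ge CZ$). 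Chaining (a)--(c) gives the first assertion with $K_1=\frac{K'C}{C+1}$.

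The step requiring care is (c), together with the combinatorial claim underlying it, namely the position of $\underline{q_{n+1}}$ in $I_0^n$; one also has to note the edge case $i=a_{n+2}-2$, where $\underline{-q_n-(a_{n+2}-(i+2))q_{n+1}}$ is the endpoint $\underline{-q_n}$ of the gap, but the estimates in (b) and (c) are unaffected there. Everything else transcribes Lemmas~\ref{beta} and \ref{altro}.
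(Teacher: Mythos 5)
Your proof is correct and follows the paper's own route: part (2) is obtained exactly as you do it, from Proposition \ref{figo1} applied to the gap of $\mathscr P_{n+1}$ adjacent to $\underline{-q_n-q_{n+1}}$, and part (1) by first applying $f$ and then pushing forward by $f^{q_{n+1}-1}$ with Proposition \ref{Koebe liv} on precisely the $T$ and $J$ of Lemma \ref{beta}. The only addition is your explicit step (c) absorbing the endpoint shift from $\underline 0$ to $\underline{q_{n+1}}$ via the bound $1+Z/X\le 1+C^{-1}$ from Proposition \ref{figo1}; the paper leaves this implicit by saying ``like in Lemma \ref{beta}'' (where the analogous step is handled with Lemma \ref{prim}), and your variant has the small advantage of covering the case $i=0$ without needing $(\underline{2q_{n+1}},\underline{q_{n+1}})$ to sit inside the relevant interval.
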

\begin{proof}
In order to prove point $(2)$ it is sufficient to observe that:
\begin{equation*}
\gamma_n( a_{n+2}-1)=\frac{|[\underline{-q_n-q_{n+1}},\underline{0})|}{|(\underline{-q_n},\underline{0})|}
\end{equation*}
which is greater than a positive uniform constant (see Proposition \ref{figo1}).
\par
In order to prove point $(1)$ we first apply $f$ to intervals defining $\gamma_n(i)$ and then Proposition \ref{Koebe liv} to
\begin{itemize}
\item[-]$T=[\underline{-q_{n}-q_{n+1}+1}, \underline{-q_{n+1}+1}]$,
\item[-]$J=(\underline{-q_{n}-q_{n+1}+1}, \underline{-q_{n+1}+1})$,
\item[-]$f^{q_{n+1}-1}$.
\end{itemize}
Like in Lemma \ref{beta}, the hypotheses are satisfied, so there exists a constant $K_1>0$ such that, for all $i$, $(\gamma_n(i))^\ell\geq K_1\gamma_n(i+1)$.
\end{proof}
In the following, in order to simplify notation, we note $\beta_n=\beta_n(a_{n+2}-1)$ and $\gamma_n=\gamma_n(a_{n+2}-1)$.

\subsection{The Central Part of the Proof}\label{CPP}
We recall that
\begin{displaymath}
\tau_n=\frac{|(\underline{0},\underline{q_n})|}{|(\underline{0},\underline{q_{n-2}})|},
\end{displaymath}
\begin{displaymath}
\alpha_n=\frac{|(\underline{-q_n},\underline{0})|}{|[\underline{-q_n},\underline{0})|}
\end{displaymath}
and we introduce a new parameter which measures the relative size of $\alpha_n$ and $\tau_n$,
\begin{equation*}
k_n=\frac{|(\underline{0},\underline{q_n})|}{|(\underline{0},\underline{-q_{n-1}})|}.
\end{equation*}

\begin{remark}\label{finpr}
We recall that the point $\underline{q_{n-2}}$ is situated in the gap between $\underline{-q_{n-1}}$ and $\underline{-q_{n-1}+q_{n-2}}$ of the dynamical partition $\mathscr{P}_{n-2}$.
\par
Then, by Proposition \ref{figo1}, there exists a constant $K>0$ such that
\begin{equation*}
 k_n\geq\frac{\tau_n}{\alpha_{n-1}}\geq K k_n.
 \end{equation*}
 \par
Finally, $\frac{\tau_n}{\alpha_{n-1}}$ and $k_n$ are comparable.
\end{remark}
To complete the proof of the second claim of Theorem \ref{sw1} it is necessary to find a bound for the sequence $(\alpha_n)_{n \in \N}$ and $(k_n)_{n \in \N}$. For this reason we prove the following propositions.

\begin{proposition}\label{n7}
There exists a positive constant $K$ such that, for $n$ large enough
\begin{equation*}
k_n\geq K\beta_{n-1}^{\left(\frac{1-\ell^{-a_{n+1}-1}}{1-\ell^{-1}}\right)}.
\end{equation*}

\end{proposition}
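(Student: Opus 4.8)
The plan is to bound $k_n$ from below by a telescoping product of the quantities $\beta_{n-1}(i)\gamma_{n-1}(i)$ and then to run a recursion, based on Lemmas~\ref{beta} and~\ref{gamma}, whose solution is precisely the geometric sum $\frac{1-\ell^{-a_{n+1}-1}}{1-\ell^{-1}}=1+\ell^{-1}+\cdots+\ell^{-a_{n+1}}$. The first part is elementary combinatorics of the dynamical partition; the delicate part is to make sure, in the second, that the uniform constants of the distortion tools do not accumulate when the recursion is iterated $a_{n+1}$ times.

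\textit{Step 1 (geometric reduction).} Write $X_j:=\underline{-q_{n-1}-jq_n}$ for $0\le j\le a_{n+1}$, so that $X_0=\underline{-q_{n-1}}$ and $X_{a_{n+1}}=\underline{-q_{n+1}}$. For $n$ even the points $\underline{q_n}$ and $\underline{-q_{n-1}}$ lie on the same side of $\underline 0$, the point $\underline{q_n}$ sits in the gap of $\mathscr P_n$ between $\underline{-q_{n+1}}$ and $\underline{-q_{n+1}+q_n}=X_{a_{n+1}-1}$ (the analogue of Remark~\ref{finpr}), and the $X_j$ occur in this cyclic order from $X_0$ to $X_{a_{n+1}}$; in particular $\underline{-q_{n+1}}$ lies between $\underline 0$ and $\underline{q_n}$. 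Since $\beta_{n-1}(i)\gamma_{n-1}(i)=\dfrac{|(\underline 0,X_{a_{n+1}-i})|}{|(\underline 0,X_{a_{n+1}-i-1})|}$, telescoping gives
\[
k_n=\frac{|(\underline 0,\underline{q_n})|}{|(\underline 0,\underline{-q_{n-1}})|}\ \ge\ \frac{|(\underline 0,\underline{-q_{n+1}})|}{|(\underline 0,\underline{-q_{n-1}})|}\ =\ \prod_{i=0}^{a_{n+1}-1}\beta_{n-1}(i)\,\gamma_{n-1}(i).
\]

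\textit{Step 2 (the recursion).} Set $\delta_{n-1}(i)=\beta_{n-1}(i)\gamma_{n-1}(i)$. Lemma~\ref{beta} combined with Lemma~\ref{gamma}(1) gives a uniform constant $c>0$ with $\delta_{n-1}(i)^\ell\ge c\,\delta_{n-1}(i+1)$ for $1\le i\le a_{n+1}-2$; Lemma~\ref{gamma}(2) gives $\delta_{n-1}(a_{n+1}-1)=\beta_{n-1}\gamma_{n-1}\ge K_2\beta_{n-1}$; and Lemma~\ref{altro} gives $\beta_{n-1}<1$ for $n$ large. Iterating $\delta_{n-1}(i)^\ell\ge c\,\delta_{n-1}(i+1)$ downward from $i=a_{n+1}-1$ peels off successive powers $(K_2\beta_{n-1})^{\ell^{-k}}$, $k=0,1,2,\dots$, so that the exponent of $\beta_{n-1}$ in $\prod_i\delta_{n-1}(i)$ accumulates into the required geometric sum; the left-over end factor $\delta_{n-1}(0)=\alpha_{n+1}\gamma_{n-1}(0)$ is handled by a separate but analogous estimate (a Koebe step of the kind used in the proof of Lemma~\ref{beta}, now with Proposition~\ref{figo1} in place of Lemma~\ref{prim}). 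Because $\ell>1$ and $\beta_{n-1}<1$, the exponent may then be enlarged freely up to $1+\ell^{-1}+\cdots+\ell^{-a_{n+1}}$, and absorbing all the uniform constants into one $K$ gives $k_n\ge K\,\beta_{n-1}^{(1-\ell^{-a_{n+1}-1})/(1-\ell^{-1})}$.

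\textit{The main obstacle.} The hard point, exactly as the introduction warns, is the accumulation of constants. A step-by-step iteration of Lemma~\ref{beta} multiplies the Koebe and cross-ratio constants roughly $a_{n+1}$ times, and $c^{\Theta(a_{n+1})}$ is not bounded away from zero; so one should instead realise the whole chain $\prod_i\delta_{n-1}(i)$ as a single ratio of images under the one high iterate $f^{a_{n+1}q_n}$ --- which near $\partial U$ is an $\ell$-fold branch followed by a diffeomorphism --- and apply the distortion estimates (Proposition~\ref{Koebe liv} and the cross-ratio $\Cr$ chain, with its universal constant) to that composition once and for all, so that only one copy of each universal constant enters. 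The summability $\sum_{m\ge1}\ell^{-m}<\infty$ is what keeps the resulting exponent finite and, indeed, what produces the bounded closed form $\frac{1-\ell^{-a_{n+1}-1}}{1-\ell^{-1}}$ in the statement.
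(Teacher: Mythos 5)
Your overall architecture -- telescope $k_n$ into a product of the $\beta_{n-1}(i)\gamma_{n-1}(i)$ and then feed Lemmas \ref{beta} and \ref{gamma} into the resulting recursion to produce the geometric sum in the exponent -- is exactly the paper's. But your Step 1 telescopes one level too far, and that is a genuine gap. By bounding $|(\underline 0,\underline{q_n})|$ below by $|(\underline 0,\underline{-q_{n+1}})|$ you discard the entire preimage $\underline{-q_{n+1}}$, and your product consequently contains the factor $\delta_{n-1}(0)=\beta_{n-1}(0)\gamma_{n-1}(0)$ with $\beta_{n-1}(0)=\alpha_{n+1}$. This index is excluded from Lemma \ref{beta}, and for good reason: if $\alpha_{n+1}$ is small then $|(\underline 0,\underline{-q_{n+1}})|\ll|\underline{-q_{n+1}}|$ and $\delta_{n-1}(0)$ is of order $\alpha_{n+1}$ (up to the uniform constant of Proposition \ref{figo1}), i.e.\ it is essentially the quantity whose lower bound is the content of the whole theorem. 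No ``Koebe step of the kind used in Lemma \ref{beta}'' bounds it from below by a constant at this stage, and invoking Proposition \ref{n1} at level $n+1$ would be circular, since that proposition relies on the one you are proving. The paper's first move is designed precisely to avoid this: since $\underline{q_n}$ lies in the gap of $\mathscr P_n$ between $\underline{-q_{n+1}}$ and $\underline{-q_{n+1}+q_n}=\underline{-q_{n-1}-(a_{n+1}-1)q_n}$, one has $|(\underline 0,\underline{q_n})|\geq|(\underline 0,\underline{-q_{n+1}}]|$ (closed at the far endpoint), and Proposition \ref{figo1} then gives $k_n\geq K_1\,|(\underline 0,\underline{-q_{n-1}-(a_{n+1}-1)q_n})|/|(\underline 0,\underline{-q_{n-1}})|$, so the telescoping runs only over $i=1,\dots,a_{n+1}-1$, exactly the range covered by Lemmas \ref{beta} and \ref{gamma}.

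Two further points. First, when $a_{n+1}\in\{1,2\}$ Lemma \ref{beta} is vacuous and your product degenerates; these cases need (and in the paper receive) separate direct arguments via Proposition \ref{figo1}. Second, your worry about the constants $c^{\Theta(a_{n+1})}$ is thematically on target -- the paper itself names the accumulation of constants as the central difficulty -- but your proposed remedy, realising the whole chain as a single ratio of images under $f^{a_{n+1}q_n}$, is only announced, not carried out, and it is not what the paper does: the paper's proof iterates Lemmas \ref{beta} and \ref{gamma} directly and absorbs the constants into a single $K_3$. As written, your argument therefore has one unfixable step (the factor $\alpha_{n+1}$) and one unexecuted repair, so it does not yet constitute a proof.
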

\begin{proof}
By Proposition \ref{figo1} there exists a uniform constant $K_1>0$ such that
\begin{equation}\label{eq:pr1}
k_n\geq K_1\frac{|(\underline{0},\underline{-q_{n-1}-(a_{n+1}-1)q_n})|}{|(\underline{0},\underline{-q_{n-1}})|}.
\end{equation}
\par

For all $i\in\{1,\dots, a_{n+1}-1\}$ and for all $j\in\{2,\dots, a_{n+1}-2\}$, we multiply and divide alternatively by $|(\underline{0},\underline{-q_{n-1}-(a_{n+1}-i)q_n}]|$ and by \\$|(\underline{0},\underline{-q_{n-1}-(a_{n+1}-j)q_n})|$ to obtain that:
\begin{equation*}
k_n\geq K_1\beta_{n-1}(1)\gamma_{n-1}(2)\beta_{n-1}(2)\dots\gamma_{n-1}(a_{n+1}-2)\beta_{n-1}(a_{n+1}-1)\frac{|(\underline{0},\underline{-q_{n-1}-q_n}]|}{|(\underline{0},\underline{-q_{n-1}})|}.
\end{equation*}
\par
By Proposition \ref{figo1}, there exists a positive constant $K_2$ such that
\[
\frac{|(\underline{0},\underline{-q_{n-1}-q_n}]|}{|(\underline{0},\underline{-q_{n-1}})|}> K_2.
\]
We apply Lemma \ref{beta}, Lemma \ref{gamma} and we have:
\begin{equation}\label{eq:xa1}
k_n\geq K_3\beta_{n-1}\beta_{n-1}^{\frac{1}{\ell}}\dots\beta_{n-1}^{\left( \frac{1}{\ell^{a_{n+1}-2}}\right)}=K_3\beta_{n-1}^{\left(\frac{1-\ell^{-a_{n+1}+1}}{1-\ell^{-1}}\right)}.
\end{equation}
where $K_3$ is a positive constant.\\

It remains to study the case $a_{n+1}=1$ and $a_{n+1}=2$ for which we can't use Lemma \ref{beta}.
\\

We assume that $a_{n+1}=1$. In this case the gap on the right of  $\underline{-q_{n+1}}$ is $(\underline{-q_{n+1}},\underline{-q_{n-1}})$ and then by Proposition \ref{figo1} there exists $K_4>0$ such that $k_n\geq K_4$.
\par

We find the same inequality that (\ref{eq:xa1}) in the specific case $a_{n+1}=1$.\\

If $a_{n+1}=2$ we proceed like in the general case up to obtain (like in   (\ref{eq:pr1})) that:
\begin{equation*}
k_n\geq K_1\frac{|(\underline{0},\underline{-q_{n-1}-q_n})|}{|(\underline{0},\underline{-q_{n-1}})|}
\end{equation*}
which by Proposition \ref{figo1} is greater than a uniform positive constant multiplied by $\beta_{n-1}$.
\\

The proof of the proposition is then complete.
\end{proof}

\begin{proposition}\label{n1}
There exists a constant $K>0$ such that, for $n$ large enough
\begin{equation*}
\alpha_n\geq K\beta_{n-1}^{\ell\left(\frac{1-\ell^{-a_{n+1}}}{\ell-1}\right)}\beta_{n-2}^{\ell^{-a_{n}+1}}.
\end{equation*}
\end{proposition}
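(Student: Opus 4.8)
The plan is to derive the lower bound for $\alpha_n$ by the same telescoping strategy used in Proposition \ref{n7}, but now tracking the full passage from scale $q_{n+1}$ down through scale $q_{n-1}$, which is why two factors $\beta_{n-1}$ and $\beta_{n-2}$ appear. First I would rewrite
\[
\alpha_n=\frac{|(\underline{-q_n},\underline{0})|}{|[\underline{-q_n},\underline{0})|}
\]
and, using Proposition \ref{figo1} to discard $|\underline{-q_n}|$ against the comparable adjacent gap, replace the denominator by a quantity comparable to $|(\underline{-q_n-q_{n+1}},\underline 0)|$ or, pushing one partition level further, by $|[\underline{-q_{n-1}-q_n},\underline 0)|$ up to a uniform constant. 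The point is that $\underline{-q_n}$ sits inside a long gap of $\mathscr P_{n-1}$ that is subdivided (by \eqref{div}) into the $a_{n+1}$ preimages $\underline{-q_{n-1}-jq_n}$ and the corresponding long gaps; so $\alpha_n$ can be written, up to a uniform constant, as a telescoping product
\[
\alpha_n\asymp \beta_{n-1}(?)\gamma_{n-1}(?)\cdots \times \frac{|[\underline{-q_{n-1}-q_n},\underline 0)|}{|(\underline{-q_{n-1}},\underline 0)|}\times(\text{tail}),
\]
exactly as in Proposition \ref{n7} but with the range of indices shifted because we start one scale coarser.

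The second step is to feed this product into Lemma \ref{beta} and Lemma \ref{gamma}. Lemma \ref{gamma} contributes only uniform constants (both its statements give lower bounds by constants), so the whole $\gamma$-chain is absorbed into $K$. Lemma \ref{beta} converts the chain $\beta_{n-1}(1),\beta_{n-1}(2),\dots,\beta_{n-1}(a_{n+1}-1)$ into
\[
\beta_{n-1}\,\beta_{n-1}^{1/\ell}\cdots\beta_{n-1}^{1/\ell^{\,a_{n+1}-2}}=\beta_{n-1}^{\frac{1-\ell^{-a_{n+1}+1}}{1-\ell^{-1}}},
\]
and multiplying by the leftover factor $\beta_{n-1}$ (the one that does not come from the recursion, coming from the first term of the telescoping as in the $a_{n+1}=2$ case of Proposition \ref{n7}) gives an exponent that, after clearing $1-\ell^{-1}=(\ell-1)/\ell$, rearranges to $\ell\bigl(\frac{1-\ell^{-a_{n+1}}}{\ell-1}\bigr)$ — this is precisely the first exponent in the statement. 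The extra factor $\beta_{n-2}^{\ell^{-a_n+1}}$ appears because, when we estimated the tail term $|(\underline{-q_{n-1}},\underline 0)|$ in the denominator against the still-coarser scale $\underline{-q_{n-2}-q_{n-1}}$, one more application of Lemma \ref{beta}, this time at level $n-2$ and only propagated $a_n-1$ steps, produces $\beta_{n-2}$ raised to $1/\ell^{\,a_n-1}=\ell^{-a_n+1}$.

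The routine parts are the bookkeeping of which interval endpoint is "closer" (so that Fact \ref{F1} applies in the right direction) and the repeated invocation of Proposition \ref{figo1} to drop intervals containing $\underline 0$; these are mechanical once Figure \ref{fig:bg} is kept in mind. I expect the main obstacle to be the edge cases $a_{n+1}\in\{1,2\}$ and $a_n\in\{1,2\}$, where Lemma \ref{beta} does not apply and one must argue directly from Proposition \ref{figo1} that the corresponding factor is bounded below by a constant (and check that the claimed exponent formula still degenerates correctly: for $a_{n+1}=1$ the exponent $\ell\frac{1-\ell^{-1}}{\ell-1}=1$, so one just needs $\alpha_n\geq K\beta_{n-1}$, etc.). A secondary concern is making sure the two $\beta$-chains do not "overlap" — i.e. that the scales $q_n$ and $q_{n-1}$ genuinely separate the two telescoping ranges — so that the accumulated uniform constants from Lemma \ref{gamma} and Proposition \ref{figo1} really are uniformly bounded and do not blow up with $a_{n+1}$ or $a_n$; this is the same "accumulation of constants" difficulty flagged in the introduction, and it is handled precisely because Lemma \ref{gamma}(1) gives a bound with a single constant $K_1$ independent of $i$.
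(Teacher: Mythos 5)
Your plan has the right flavor (telescoping against the points $\underline{-q_{n-1}-jq_n}$ and converting the chain via Lemmas \ref{beta} and \ref{gamma}), but two things in it break down. First, you never apply $f$ to pass from $\alpha_n$ to $\alpha_n^\ell$, and this step is not optional: the interval $[\underline{-q_n},\underline 0)$ lies on the \emph{opposite} side of the flat interval from the long gap $(\underline{-q_{n-1}},\underline 0)$ that the preimages $\underline{-q_{n-1}-jq_n}$ subdivide, so $\underline{-q_n}$ does not ``sit inside'' that gap and the denominator $|[\underline{-q_n},\underline 0)|$ cannot be replaced by $|[\underline{-q_{n-1}-q_n},\underline 0)|$ up to a uniform constant --- their ratio is essentially the quantity one is trying to estimate. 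The paper's proof instead starts from
\begin{equation*}
\alpha_n^\ell=\frac{|(\underline{-q_n+1},\underline 1)|}{|[\underline{-q_n+1},\underline 1)|},
\end{equation*}
where after one iterate all the relevant points $\underline{-q_n+1},\ \underline{-q_{n-1}-q_n+1},\ \underline{-q_{n-1}+1},\ \underline 1$ line up, splits this into three factors $\xi_1\xi_2\xi_3$, bounds $\xi_1$ by $k_n$ via Proposition \ref{Koebe liv} and then simply quotes Proposition \ref{n7} (rather than redoing the telescoping), bounds $\xi_2\geq K\beta_{n-1}^\ell$ via Lemma \ref{L1}, and bounds $\xi_3\geq K\beta_{n-2}^{\ell^{-a_n+2}}$ via Lemma \ref{beta} applied at level $n-2$; the final $\ell$-th root then produces exactly the stated exponents.

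Second, and relatedly, your exponent arithmetic is false:
\begin{equation*}
\frac{1-\ell^{-a_{n+1}+1}}{1-\ell^{-1}}+1=\ell\left(\frac{1-\ell^{-a_{n+1}}}{\ell-1}\right)+\left(1-\ell^{-a_{n+1}+1}\right),
\end{equation*}
so your route gives a strictly larger exponent whenever $a_{n+1}\geq 2$ (e.g.\ $2$ instead of $\tfrac{\ell+1}{\ell}$ for $a_{n+1}=2$). Since $\beta_{n-1}\leq 1$, a larger exponent is a strictly \emph{weaker} lower bound, so this neither proves the proposition as stated nor suffices for the subsequent matrix analysis, where these exponents are used sharply (for $\ell=3$ and $a_j\equiv 1$ the relevant row sums already equal $1$). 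The missing $1/\ell$ coming from the initial application of $f$ is precisely what is needed: the correct ``leftover'' exponent is $\ell^{-a_{n+1}+1}$ (equivalently $\ell$ before extracting the root), not $1$. A minor further point: Lemma \ref{gamma}(1) is a recursion $(\gamma_n(i))^\ell\geq K_1\gamma_n(i+1)$, not a direct constant bound; it only yields uniform constants after being iterated down to Lemma \ref{gamma}(2), so your claim that ``both its statements give lower bounds by constants'' glosses over exactly the accumulation-of-constants issue you flag at the end.
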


\begin{proof}
If $n$ is large enough, $(\alpha_n)^\ell$ is equal to the fraction
\begin{equation*}
\frac{|(\underline{-q_n+1},\underline{1})|}{|[\underline{-q_n+1},\underline{1})|}
\end{equation*}
which is greater than the product of the followings three fractions:
\begin{eqnarray*}
\xi_1&=&\frac{|(\underline{-q_n+1},\underline{1})|}{|(\underline{-q_n+1},\underline{-q_{n-1}-q_{n}+1})|}, \\ \xi_2&=&\frac{|(\underline{-q_n+1},\underline{-q_{n-1}-q_{n}+1})|}{|(\underline{-q_n+1},\underline{-q_{n-1}+1})|}, \\
\xi_3&=&\frac{|(\underline{-q_n+1},\underline{-q_{n-1}+1})|}{|[\underline{-q_n+1},\underline{-q_{n-1}+1})|}.
\end{eqnarray*}

We focus on each fraction separately.
\begin{itemize}
\item[$1^{st}$ step.] We prove that $\xi_1\geq K_1\beta_{n-1}^{\left(\frac{1-\ell^{-a_{n+1}-1}}{1-\ell^{-1}}\right)}$.\\

We apply Proposition \ref{Koebe liv} to
\begin{itemize}
\item[-]$T=[\underline{-q_{n}+1}, \underline{-q_{n}-q_{n-1}+1}]$,
\item[-]$J=(\underline{-q_{n}+1}, \underline{-q_{n}-q_{n-1}+1})$,
\item[-]$f^{q_{n}-1}$.
\end{itemize}
Like in the previous lemmas the hypothesis are satisfied, so there exists a positive constant $C_1$ such that:
\begin{equation}\label{eq:pr}
\xi_1\geq C_1\frac{|(\underline{0},\underline{q_n})|}{|(\underline{0},\underline{-q_{n-1}})|}.
\end{equation}
\par
We now apply Proposition \ref{n7} and we find the wanted estimate.
\\

\item[$2^{nd}$ step.] We prove that $\xi_2\geq K_2\beta_{n-1}^\ell$\\

By Proposition \ref{figo1} and Lemma \ref{L1} we have two positives constants $C_2 $ and $C_3 $ such that
\begin{equation}\label{eq:xe}
\xi_2\geq C_2\frac{|(\underline{1},\underline{-q_{n-1}-q_{n}+1})|}{|(\underline{-q_n+1},\underline{-q_{n-1}-q_{n}+1}]|}\geq C_2 C_3 \frac{|(\underline{1},\underline{-q_{n-1}-q_{n}+1})|}{|(\underline{1},\underline{-q_{n-1}-q_{n}+1}]|}
\end{equation}
and this last fraction is exactly $\beta_{n-1}^\ell$.\\

\item[$3^{rd}$ step.] We prove that $\xi_3\geq K_3\beta_{n-2}^{\ell^{-a_n+2}}$. \\

We apply Lemma \ref{beta} and Proposition \ref{Koebe liv} to
\begin{itemize}
\item[-]$T=[\underline{-q_{n-2}-q_{n-1}+1}, \underline{-q_{n-1}+1}]$,
\item[-]$J=(\underline{-q_{n-2}-q_{n-1}+1}, \underline{-q_{n-1}+1})$,
\item[-]$f^{q_{n-1}-1}$.
\end{itemize}
and we find, under the assumption that $a_n\notin \{1,2\}$, two positive constants $C_4 > 0$ and $C_5 > 0$ such that:
\begin{equation}\label{eq:xi}
\xi_3\geq C_4\frac{|(\underline{-q_n+q_{n-1}},\underline{0})|}{|[\underline{-q_n+q_{n-1}},\underline{0})|}\geq C_4\beta_{n-2}(1)\geq C_4 C_5\beta_{n-2}^{\ell^{-a_n+2}}.
\end{equation}
\\

If $a_n=1$, we obtain exactly the same estimate, in fact, in this case $\underline{-q_n+1}=\underline{-q_{n-2}-q_{n-1}+1}$ and by Lemma \ref{L1} there exists a constant $C_6 > 0$ such that
\begin{equation*}
\xi_3\geq C_{6}\beta_{n-2}^\ell.
\end{equation*}
\\
If $a_n=2$, we proceed like the general case until the first inequality (\ref{eq:xi}). Now it is sufficient to observe that, in this case,
$$\frac{|(\underline{-q_n+q_{n-1}},\underline{0})|}{|[\underline{-q_n+q_{n-1}},\underline{0})|}=\beta_{n-2}.$$
\end{itemize}

Finally, using the estimates obtained for $\xi_1$, $\xi_2$ and $\xi_3$ we have
\begin{equation*}
\alpha_n^\ell\geq K\beta_{n-1}^{\left(\frac{1-\ell^{-a_{n+1}+1}}{1-\ell^{-1}}\right)}\beta_{n-1}^\ell\beta_{n-2}^{\ell^{-a_n+2}}
\end{equation*}
therefore
\begin{equation*}
\alpha_n\geq K^{\frac{1}{\ell}}\beta_{n-1}^{\ell\left(\frac{1-\ell^{-a_{n+1}}}{\ell-1}\right)}\beta_{n-2}^{\ell^{-a_{n}+1}}.
\end{equation*}

\end{proof}
\par
By Remark \ref{finpr}, Proposition \ref{n7} and Proposition \ref{n1}, in order to find a lower bound for $\tau_n$, it is necessary to study the sequence $\beta_n$. Hence the following propositions:
\begin{proposition}\label{n2}
There exists a positive constant $K$ such that, for $n$ large enough
\begin{equation*}
\beta_n\geq K\beta_{n-1}^{\frac{1}{\ell}}\alpha_{n}^{\frac{1}{\ell}}
\end{equation*}
\end{proposition}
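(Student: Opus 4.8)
The plan is to apply $f$ once to the two intervals defining $\beta_n$ and then, after a telescopic splitting, to read off the factors $\alpha_n$ and $\beta_{n-1}$, controlling the remaining factors with Proposition \ref{figo1}, Fact \ref{F1} and the lemmas of this section. This is the same strategy used in the proofs of Lemma \ref{beta} and Proposition \ref{n1}; the new point is that the configuration entering $\beta_n=\beta_n(a_{n+2}-1)$ corresponds to the boundary index $i=a_{n+2}-1$, which is not covered by Lemma \ref{beta}, so the clean application of Proposition \ref{Koebe liv} on $T=[\underline{-q_n-q_{n+1}+1},\underline{-q_{n+1}+1}]$ is no longer available unchanged and must be supplemented.

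First I would note that the two intervals in $\beta_n$ share the endpoint of $\underline 0$ nearest to $\underline{-q_n-q_{n+1}}$, on a one-sided neighbourhood of which $f$ equals $x\mapsto x^{\ell}$ after the standing normalisation $h_l=h_r=\mathrm{id}$; hence, for $n$ large (so that the whole configuration sits in that neighbourhood),
\[
\beta_n^{\ell}=\frac{|(\underline{-q_n-q_{n+1}+1},\underline 1)|}{|[\underline{-q_n-q_{n+1}+1},\underline 1)|},
\]
exactly as in the opening steps of the proofs of Lemma \ref{beta} and Proposition \ref{n1}, and it suffices to bound the right-hand side below by $K\,\beta_{n-1}\,\alpha_n$. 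Because $\underline{-q_n-q_{n+1}}$ lies in the gap $(\underline{-q_n},\underline 0)$ of $\mathscr P_n$ while $\underline{-q_{n-1}-q_n}$ lies in the gap on the opposite side of $\underline 0$, after applying $f$ both of the points $\underline{-q_n+1}$ and $\underline{-q_{n-1}-q_n+1}$ fall into the one-sided neighbourhood of $\underline 1$ occupied by $\underline{-q_n-q_{n+1}+1}$; these are the points I would insert. Splitting $\beta_n^{\ell}$ at them, one factor is, up to the uniform constants of Proposition \ref{figo1}, the ratio $\dfrac{|(\underline{-q_n+1},\underline 1)|}{|[\underline{-q_n+1},\underline 1)|}$, which has to be returned to the scale of $\underline 0$ by pulling it back along $f^{q_{n+1}-1}$ on an admissible interval --- a diffeomorphism, as already verified in Lemma \ref{beta} --- and controlled by Proposition \ref{Koebe liv}, so that $\alpha_n$ rather than $\alpha_n^{\ell}$ is produced; the factor coming from $\underline{-q_{n-1}-q_n+1}$ is bounded below by a constant times $\beta_{n-1}$ by means of Fact \ref{F1}, Lemma \ref{L1} and Proposition \ref{figo1}, precisely as the factor $\xi_2$ is treated in the second step of the proof of Proposition \ref{n1}; and the remaining factors are bounded below by uniform constants through Proposition \ref{figo1} and Lemma \ref{prim}. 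The degenerate combinatorial cases $a_{n+2}\in\{1,2\}$, in which Lemma \ref{beta} cannot be invoked, would be settled separately, in the spirit of the end of the proof of Proposition \ref{n7}: for $a_{n+2}=1$ one has $\underline{-q_n-q_{n+1}}=\underline{-q_{n+2}}$ and a constant lower bound follows more directly, and for $a_{n+2}=2$ the same splitting is carried out with one term fewer.

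The step I expect to be the main obstacle is exactly this bookkeeping of the exponent $\ell$ and of the accumulated distortion constants: a naive telescoping of $\frac{|(\,\cdot\,,\underline 1)|}{|[\,\cdot\,,\underline 1)|}$ reproduces $\alpha_n^{\ell}$ and $\beta_{n-1}^{\ell}$ in place of $\alpha_n$ and $\beta_{n-1}$, which is weaker than what is claimed, so one is forced to pull the configuration back to the scale of $\underline 0$ along the iterates $f^{q_{n+1}-1}$ (and, where necessary, $f^{q_n-1}$) before identifying the factors, and at each such pull-back one must verify the hypotheses of Proposition \ref{Koebe liv} (or of the $\mathbf{Cr}$-distortion estimate): that the pulled-back intervals avoid $\underline 0$ and cover the circle with bounded multiplicity, so that the distortion constants stay uniform in $n$. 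As stressed in the introduction, this accumulation of constants is the genuinely delicate point, but on the intervals at hand each iterate is applied to a chain that avoids $\underline 0$ and is boundedly covered, so the distortion tools do apply with a uniform constant and the argument closes.
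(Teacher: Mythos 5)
There is a genuine gap, and it sits exactly at the point you flag as "the main obstacle": your mechanism for obtaining the exponent $\frac{1}{\ell}$ (rather than $1$) on $\beta_{n-1}$ and $\alpha_n$ does not work. First, the geometric setup of your splitting is wrong: since $f$ is weakly order preserving, $\underline{-q_n}$ and $\underline{-q_{n-1}-q_n}$, which lie on opposite sides of $\underline{0}$, have images on opposite sides of $\underline{1}$; they do not both "fall into the one-sided neighbourhood of $\underline 1$ occupied by $\underline{-q_n-q_{n+1}+1}$", and in fact neither of them lies inside $(\underline{-q_n-q_{n+1}+1},\underline 1)$ (the order is $\underline{-q_n+1}$, then $\underline{-q_n-q_{n+1}+1}$, then $\underline 1$, then $\underline{-q_{n-1}-q_n+1}$). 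So the telescoping you propose cannot even be set up. Second, and more fundamentally, the factor you would read off at $\underline{-q_n+1}$, namely $\frac{|(\underline{-q_n+1},\underline 1)|}{|[\underline{-q_n+1},\underline 1)|}$, \emph{is} $\alpha_n^{\ell}$ identically (by the normal form of $f$ at the singularity), and the factor treated "as $\xi_2$" yields $\beta_{n-1}^{\ell}$, not $\beta_{n-1}$ (Step 2 of Proposition \ref{n1} proves $\xi_2\geq K_2\beta_{n-1}^{\ell}$). Your proposed remedy --- pulling back along $f^{q_{n+1}-1}$ "so that $\alpha_n$ rather than $\alpha_n^{\ell}$ is produced" --- is not a legitimate operation: Proposition \ref{Koebe liv} and the $\mathbf{Cr}$-estimates compare a ratio with the corresponding ratio of image intervals up to a uniform constant; they cannot change the value of a fixed ratio, and in particular cannot convert $\alpha_n^{\ell}$ into $\alpha_n$. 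What your scheme can honestly deliver is $\beta_n^{\ell}\geq K\alpha_n^{\ell}\beta_{n-1}^{\ell}$, i.e. $\beta_n\geq K'\alpha_n\beta_{n-1}$, which is strictly weaker than the claim and insufficient for the recurrence of Theorem \ref{n3}.

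The paper's proof takes a different route precisely to avoid ever forming $\alpha_n^{\ell}$ or $\beta_{n-1}^{\ell}$. One transports the \emph{whole} ratio $\beta_n^{\ell}=\frac{|(\underline{-q_{n}-q_{n+1}+1},\underline 1)|}{|[\underline{-q_{n}-q_{n+1}+1},\underline 1)|}$ forward by $f^{q_n-1}$ (Koebe on $T=[\underline{-q_n+1},\underline{-q_{n-1}-q_n+1}]$) to the configuration $\frac{|(\underline{-q_{n+1}},\underline{q_n})|}{|[\underline{-q_{n+1}},\underline{q_n})|}$ near $\underline 0$, and only then splits, at the point $\underline{-q_{n+1}+q_n}$, into $\eta_1\cdot\eta_2$. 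The factor $\eta_1$ is bounded below by a constant (push forward by $f^{q_{n+1}-q_n}$, then Proposition \ref{figo1} and Lemma \ref{altro}), and $\eta_2$ is driven down to $K\beta_{n-1}\alpha_n$ --- with first powers --- by a further chain of $\mathbf{Cr}$-distortion steps over $(a_{n+1}-2)q_n$ and $q_n$ iterates, Lemma \ref{prim}, Lemma \ref{altro}, and a final application of Fact \ref{F1} and Koebe along $f^{q_{n-1}-1}$ that produces $\alpha_n$ directly at the scale of $\underline 0$. The single power of $\ell$ is thus paid only once, at the very first application of $f$; this is the idea missing from your argument.
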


\begin{proof}
We start by applying Proposition \ref{Koebe liv} to

\begin{itemize}
\item[-]$T=[\underline{-q_{n}+1}, \underline{-q_{n-1}-q_n+1}]$,
\item[-]$J=(\underline{-q_{n}+1}, \underline{-q_{n-1}-q_n+1})$,
\item[-]$f^{q_{n}-1}$.
\end{itemize}

Then, for $n$ large enough, there exists a constant $C_1 > 0$ such that
\begin{equation*}
\beta_n^{\ell}=\frac{|(\underline{-q_{n}-q_{n+1}+1}, \underline{1})|}{|[\underline{-q_{n}-q_{n+1}+1}, \underline{1})|}\geq C_1 \frac{|(\underline{-q_{n+1}}, \underline{q_n})|}{|[\underline{-q_{n+1}}, \underline{q_n})|}.
\end{equation*}

Multiplying and dividing by $|(\underline{-q_{n+1}}, \underline{-q_{n+1}+q_{n}})|$, we find that $\beta_n^{\ell}$ is greater than (up to a constant) the product of the following two quantities:

\begin{equation*}
\eta_1=\frac{|(\underline{-q_{n+1}}, \underline{q_n})|}{|(\underline{-q_{n+1}}, \underline{-q_{n+1}+q_{n}})|},  \eta_2=\frac{|(\underline{-q_{n+1}}, \underline{-q_{n+1}+q_{n}})|}{|[\underline{-q_{n+1}}, \underline{q_n})|}.
\end{equation*}

We focus our attention on these two quantities.\\

\begin{itemize}
\item[$1^{st}$ step.] We prove that $\eta_1\geq K_1$.\\

We use Proposition \ref{Koebe liv} with
\begin{itemize}
\item[-]$T=[\underline{-q_{n+1}}, \underline{-q_{n+1}+q_{n}}]$,
\item[-]$J=(\underline{-q_{n+1}}, \underline{-q_{n+1}+q_{n}})$,
\item[-]$f^{q_{n+1}-q_{n}}$
\end{itemize}

and we find a constant $C_2 > 0$ such that

\begin{eqnarray*}
\eta_1&\geq& C_2\frac{|(\underline{-q_{n}}, \underline{q_{n+1}})|}{|(\underline{-q_{n}}, \underline{0})|}\geq C_2\frac{|(\underline{-q_{n}}, \underline{-q_n-q_{n+1}}]|}{|(\underline{-q_{n}}, \underline{0})|}\\&\geq& C_2 C_3 \frac{|\underline{-q_{n}-q_{n+1}}|}{|[\underline{-q_{n}-q_{n+1}},\underline{0})|}\geq  C_2 C_3 C_4.
\end{eqnarray*}

We observe that $C_3$ comes from Proposition \ref{figo1} and $C_4$ from Lemma \ref{altro}. \\

\item[$2^{nd}$ step.] We show that $\eta_2\geq K_2\beta_{n-1}\alpha_{n} $. \\

By Lemma \ref{prim} there exists a constant $C_5 > 0$ such that:

\begin{eqnarray*}
\eta_2&\geq& C_5\frac{|(\underline{-q_{n+1}}, \underline{-q_{n+1}+q_n})|}{|(\underline{-q_{n+1}}, \underline{-q_{n+1}+q_{n}}]|}\\&\geq& C_5\frac{|(\underline{-q_{n+1}}, \underline{-q_{n+1}+q_n})|}{|(\underline{-q_{n+1}}, \underline{-q_{n+1}+q_{n}}]|}\frac{|(\underline{-q_{n+1}+q_n}, \underline{-q_{n+1}+2q_n}]|}{|[\underline{-q_{n+1}+q_n}, \underline{-q_{n+1}+2q_{n}}]|}.
\end{eqnarray*}

After $(a_{n+1}-2)q_n$ iterates, by the properties of distortion of cross-ratio $\mathbf{Cr}$ we have that:

\begin{eqnarray*}
\eta_2 &\geq& C_6\beta_{n-1}\frac{|(\underline{-q_{n-1}-2q_n}, \underline{-q_{n-1}-q_n})|}{|(\underline{0}, \underline{-q_{n-1}-q_{n}})|} \\
       &\geq& C_6\beta_{n-1}\frac{|\underline{-q_{n}}|}{|[\underline{-q_n}, \underline{-q_{n-1}-2q_{n}}]|}\frac{|(\underline{-q_{n-1}-2q_n}, \underline{-q_{n-1}-q_n})|}{|(\underline{-q_n}, \underline{-q_{n-1}-q_{n}})|}.
\end{eqnarray*}

Applying the properties of distortion of cross-ratio $\mathbf{Cr}$ after $q_n$ iterates and by Lemma \ref{altro} and Proposition \ref{figo1} we find that, for $n$ large enough:

\begin{equation*}
\eta_2\geq C_7\beta_{n-1}\frac{|(\underline{-q_{n-1}-q_n}, \underline{-q_{n-1}})|}{|\underline{-q_{n-1}-q_{n}}|}\geq C_7\beta_{n-1}\frac{|(\underline{-q_{n-1}-q_n}, \underline{-q_{n-1}})|}{|[\underline{-q_{n-1}-q_n}, \underline{-q_{n-1}})|}.
\end{equation*}

It remains to find a bound for $\frac{|(\underline{-q_{n-1}-q_n}, \underline{-q_{n-1}})|}{|[\underline{-q_{n-1}-q_n}, \underline{-q_{n-1}})|}$. First we apply Fact \ref{F1} and   Proposition \ref{Koebe liv} to

\begin{itemize}
\item[-]$T=[\underline{-q_{n-1}-q_{n-2}+1}, \underline{-q_{n-1}+1}]$,
\item[-]$J=(\underline{-q_{n-1}-q_{n-2}+1}, \underline{-q_{n-1}+1})$,
\item[-]$f^{q_{n-1}-1}$.
\end{itemize}

to get two constants $C_8 > 0$ and $C_9 > 0$ such that:

\begin{eqnarray*}
\frac{|(\underline{-q_{n-1}-q_n}, \underline{-q_{n-1}})|}{|[\underline{-q_{n-1}-q_n}, \underline{-q_{n-1}})|} &\geq& C_8\frac{|(\underline{-q_{n-1}-q_n+1}, \underline{-q_{n-1}+1})|}{|[\underline{-q_{n-1}-q_n+1}, \underline{-q_{n-1}+1})|} \\
&\geq& C_9\frac{|(\underline{-q_n}, \underline{0})|}{|[\underline{-q_n}, \underline{0})|}\geq C_9\alpha_n.
\end{eqnarray*}

\end{itemize}
In order to conclude the proof, it is sufficient put together the bounds found for $\eta_1$ and $\eta_2$.
\end{proof}

Propositions \ref{n1} and \ref{n2} give us the following important inequality:

\begin{theorem}\label{n3}
There exists a positive constant $K$ such that, for $n$ large enough
\begin{equation*}
\beta_n\geq K\beta_{n-1}^{\left(\frac{1}{\ell}+\frac{1-\ell^{-a_{n+1}}}{\ell-1}\right)}\beta_{n-2}^{\ell^{-a_{n}}}.
\end{equation*}
\end{theorem}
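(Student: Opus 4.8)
The statement is essentially an algebraic consequence of the two Propositions just proved: Proposition \ref{n2} gives $\beta_n \geq K \beta_{n-1}^{1/\ell}\alpha_n^{1/\ell}$, and Proposition \ref{n1} gives a lower bound for $\alpha_n$ in terms of $\beta_{n-1}$ and $\beta_{n-2}$. The plan is simply to substitute the bound for $\alpha_n$ from Proposition \ref{n1} into the bound for $\beta_n$ from Proposition \ref{n2} and collect exponents. First I would write
\begin{equation*}
\beta_n \geq K_1 \beta_{n-1}^{1/\ell}\alpha_n^{1/\ell} \geq K_1 \beta_{n-1}^{1/\ell}\left(K_2 \beta_{n-1}^{\ell\left(\frac{1-\ell^{-a_{n+1}}}{\ell-1}\right)}\beta_{n-2}^{\ell^{-a_n+1}}\right)^{1/\ell},
\end{equation*}
so that, absorbing constants into a single positive $K$, we get
\begin{equation*}
\beta_n \geq K \beta_{n-1}^{\frac{1}{\ell} + \frac{1-\ell^{-a_{n+1}}}{\ell-1}}\beta_{n-2}^{\ell^{-a_n}},
\end{equation*}
which is exactly the claimed inequality (note $\frac{1}{\ell}\cdot\ell\cdot\frac{1-\ell^{-a_{n+1}}}{\ell-1} = \frac{1-\ell^{-a_{n+1}}}{\ell-1}$ and $\frac{1}{\ell}\cdot\ell^{-a_n+1} = \ell^{-a_n}$).

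The only genuine subtlety is to make sure the two Propositions are being applied at the \emph{same} index $n$ and that "for $n$ large enough" is compatible: Proposition \ref{n2} and Proposition \ref{n1} each hold for all sufficiently large $n$, so their conjunction holds for all sufficiently large $n$, and the chain of inequalities above is valid there. I would also remark that the constant $K$ produced is uniform (independent of $n$), since it is a fixed product/power of the uniform constants furnished by Propositions \ref{n1} and \ref{n2}, and crucially it does \emph{not} depend on the partial quotients $a_n, a_{n+1}$ — only the exponents do. This is the point that matters for the subsequent analysis of convergence of the recursion.

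Since this is a one-line derivation from the preceding results, there is no real obstacle here; the hard work was already done in establishing Propositions \ref{n1}, \ref{n7}, and \ref{n2} (in particular the repeated applications of the Koebe-type Proposition \ref{Koebe liv} and the cross-ratio distortion bounds, together with Lemmas \ref{prim}, \ref{altro}, \ref{L1}, \ref{beta}, \ref{gamma}). The value of isolating this inequality as a theorem is that it packages the geometry into a clean two-term recursive lower bound for $\beta_n$ whose exponents sum, in the worst case $a_n = a_{n+1} = 1$, to $\frac{1}{\ell} + 1 + \frac{1}{\ell} = 1 + \frac{2}{\ell}$ on the $\beta$'s at the two previous levels; tracking how this compares with $1$ as $\ell$ crosses $3$ is what will eventually be used (in combination with Remark \ref{finpr} and Proposition \ref{n7}) to conclude that $\tau_n$ stays bounded away from zero when $\ell \geq 3$.
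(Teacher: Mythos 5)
Your derivation is correct and is exactly the paper's argument: the paper presents Theorem \ref{n3} with no separate proof, stating only that ``Propositions \ref{n1} and \ref{n2} give us the following important inequality,'' i.e.\ precisely your substitution of the bound for $\alpha_n$ into $\beta_n\geq K\beta_{n-1}^{1/\ell}\alpha_n^{1/\ell}$ followed by dividing the exponents by $\ell$; your remarks about the uniformity of $K$ and the compatibility of the two ``$n$ large enough'' thresholds are the right things to check. One arithmetic slip in your closing commentary (not in the proof itself): when $a_n=a_{n+1}=1$ the middle exponent is $\frac{1-\ell^{-1}}{\ell-1}=\frac{1}{\ell}$, not $1$, so the exponents sum to $\frac{3}{\ell}$ in that case rather than $1+\frac{2}{\ell}$ --- and it is exactly this $\frac{3}{\ell}\leq 1$ computation that makes the threshold $\ell\geq 3$ plausible, while the general case (where the sum can exceed $1$ for large $a_{n+1}$) is why the paper's subsequent matrix-product analysis is needed.
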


To complete the proof of the second claim of Theorem \ref{sw1}, by Remark \ref{finpr}, Proposition \ref{n7} and Proposition \ref{n1} it is sufficient to prove that the sequence $(\beta_n)_{n \in \N}$ is bounded away from zero for $\ell\geq 3$. It remains to analyze the recurrence of the sequence $(\beta_n)_{n \in \N}$.

\paragraph{Analysis of the Recurrence.} We define for all $n \in \N$ the quantity
\begin{equation*}
\nu_n= - \ln \beta_n
\end{equation*}

Theorem \ref{n3} implies that there exists a constant $K_1 > 0$ such that:
\begin{equation}\label{chis}
\nu_n-\left({\frac{1}{\ell}+\frac{1-\ell^{-a_{n+1}}}{\ell-1}}\right)\nu_{n-1}-{\ell^{-a_n}}\nu_{n-2}\leq K_1.
\end{equation}
We prove that the sequence $(\nu_n)_{n \in \N}$ is bounded. In order to do this we start to consider the sequence of vectors $(v_n)_{n \in \N}$:
\begin{displaymath}
v_n={\nu_n\choose\nu_{n-1} },
\end{displaymath}
the sequence of matrices $(A_{\ell}(n))_{n \in \N}$:
\begin{displaymath}
A_{\ell}(n) = \left( \begin{matrix}{\frac{1}{\ell}+\frac{1-\ell^{-a_{n+1}}}{\ell-1}} &{\ell^{-a_n}} \\
1 & 0
\end{matrix} \right)
\end{displaymath}
and the vector
\begin{displaymath}
k={K_1\choose 0 }.
\end{displaymath}

Now we can write (\ref{chis}) in the form

\begin{equation}\label{vit}
v_n\leq A_{\ell}(n)A_{\ell}(n-1)\dots A_{\ell}(2)v_1+\left(\sum_{i=2}^{n-1}A_{\ell}(n-1)A_{\ell}(n-2)\dots A_{\ell}(i)\right)k
\end{equation}

where the inequality must be read component-wise.

\par
To prove that the sequence $(v_n)_{n \in \N}$ is bounded and then prove the second claim of Theorem \ref{sw1}, it is necessary to study for all $n \in \N$ and $2 \leq i < n$ each product,
$$ \prod_{j = i}^n A_{\ell}(j) $$
\par
In particular we will estimate $\|\prod_{j = i}^n A_{\ell}(j)\|_{\infty}$.
\par
\begin{remark}
Recall that if $A = (a_{i,j})_{1 \leq i, j \leq n}$ is a matrix, then $$\|A\|_{\infty}=\max_{1\leq i\leq n}\sum_{1\leq j\leq n}|a_{i,j}|$$ is an operator norm. We observe in fact that $$\|A\|_{\infty}=\max_{v\in\R^n\setminus\{0\}}\frac{\|Av\|_{\infty}}{\|v\|_{\infty}}$$ where $\|v\|_{\infty}=\max_{1\leq i\leq n}|v_i|$ if $v=(v_1,\dots v_n)$.\\
Moreover if $A = (a_{i,j})_{1 \leq i, j \leq n}$ and $B = (b_{i,j})_{1 \leq i, j \leq n}$ and for all $(i,j)\in \{1,\ldots,n\}^2$, $a_{ij}\leq b_{ij}$, we use the shortcut notation $A \leq B$. In this case we have that $\|A\|_{\infty}\leq \|B\|_{\infty}$.
\end{remark}

We fix now $n$ and $i$ such that $2\leq i < n$, and for all $i\leq j\leq n$ we denote $b_j = \ell^{-a_{j+1}}$, hence
\begin{displaymath}
A_{\ell}(j)=\left( \begin{matrix}{\frac{1}{\ell}+\frac{1-b_j}{\ell-1}} &{b_{j-1}} \\
1 & 0
\end{matrix} \right).
\end{displaymath}
We observe that the sequence of reals positives numbers $(b_j)_{j \in \N}$ is bounded by $1/\ell$.\\

We fix an integer $M>1$. Then for all $j\in\lbrace{i,n-1\rbrace}$ we can have three different cases:
\begin{enumerate}
\item  $a_{j+1} <M$ and $a_j<M$,
\item $a_{j+1} \geq M$,
\item $a_j\geq M$.
\end{enumerate}
In the first case we denote $B:=A_{\ell}(j)$, in the second one $U_1:=A_{\ell}(j)$ and in the third one $U_2:=A_{\ell}(j)$ .
\par
We observe that, by point $(1)$ we cannot find products of the type $BU_1B$ or $BU_2B$ , because the two matrices $B$ affect the central matrix.\\

We fix now $\ell\geq 3$, $j\in\lbrace{i,n-1\rbrace}$ and we consider the different combinations of matrices $B$, $U_1$ and $U_2$ which we can have. \\

\begin{itemize}

\item We start by considering a product of matrices of type $BBB\cdots BB$.
\par
We observe that if $a_{j+1} <M$ and $a_j<M$ then by estimations in \cite{5aut}
\begin{equation*}
B=A_{\ell}(j)=\left( \begin{matrix}{\frac{1-b_j}{\ell-1}} &{b_{j-1}} \\
1 & 0
\end{matrix} \right)
\end{equation*}

\par
Since $\ell\geq 3$, $b_{j-1}\leq\frac{1}{3}$ and $b_j<1$, then:
\begin{equation*}
B \leq\left( \begin{matrix}{\frac{1}{2}} &{\frac{1}{3}} \\
1 & 0
\end{matrix} \right)
\end{equation*}
Calculating the spectral radius
of $W=\left( \begin{matrix}{\frac{1}{2}} &{\frac{1}{3}} \\
1 & 0
\end{matrix} \right)$, we find that it is $\rho(W) <1$, then there exists $0<\lambda_1<1$, and $C>0$ such that  $$\|W^n\|_{\infty}<C\lambda_1^n$$
In particular, $\lim_{n\to\infty}\|W^n\|_{\infty}=0 $.\\
In conclusion, if we consider a product of length $s$ of type $BBB\cdots BBB$, then there exists $0<\lambda_1<1$ and $C_1>0$ such that

\begin{equation}\label{eq:prodB}
\|B^s\|_{\infty}\leq C_1\lambda_1^s.
\end{equation}

\item We study now the product of matrices of type $U_{i_1}\cdots U_{i_{s-1}}U_2$ with $i_1,\dots,i_{s-1}\in\{1,2\}$.\\
We observe that if $a_j\geq M$, then there exists $\epsilon>0$ such that

\begin{equation}\label{eq:epsilon}
U_2=A_{\ell}(j)\leq\left( \begin{matrix}{\frac{1}{\ell}+\frac{1-b_j}{\ell-1}} &{\epsilon} \\
1 & 0
\end{matrix} \right)\leq\left( \begin{matrix}{\frac{1}{3}+\frac{1-b_j}{2}} &{\epsilon} \\
1 & 0
\end{matrix}\right).
\end{equation}
We continue to work in the limit case supposing that $\epsilon=0$ and studying the different possibilities of length two (recall always that $\ell\geq 3$  and $0<b_{j}\leq\frac{1}{3}$ ).
\begin{itemize}
\item If $a_j\geq M$ and $a_{j+2}\geq M$, then
\begin{eqnarray*}
U_1U_2&\leq&\left( \begin{matrix}{\frac{1}{\ell}+\frac{1-b_{j+1}}{\ell-1}} &{b_{j}} \\
1 & 0
\end{matrix} \right)U_2
\leq\left( \begin{matrix}{\frac{5}{6}} &{b_{j}} \\
1 & 0
\end{matrix} \right)\left( \begin{matrix}{\frac{1}{3}+\frac{1-b_{j}}{2}} &{0} \\
1 & 0
\end{matrix} \right)\\
&=&\left( \begin{matrix}{\frac{25}{36}+\frac{7}{12}b_j} &{0} \\[6pt]
{\frac{5}{6}-\frac{b_j}{2}} & 0
\end{matrix} \right)
\leq \left( \begin{matrix}{\frac{32}{36}} &{0} \\[6pt]
{\frac{5}{6}} & 0
\end{matrix} \right)\\
&\leq&\frac{8}{9}\left( \begin{matrix}{1} &{0} \\
{1} & 0
\end{matrix} \right).
\end{eqnarray*}
In particular
\begin{eqnarray}\label{eq:2U1}
\|U_1U_2\|_{\infty}&\leq&\frac{8}{9}.
\end{eqnarray}
\item If $a_{j}\geq M$ and $a_{j+1}\geq M$ then:
\begin{eqnarray*}
U_2U_2&\leq& \left( \begin{matrix}{\frac{5}{6}} &{0} \\
{1} & 0
\end{matrix} \right)\left( \begin{matrix}{\frac{5}{6}} &{0} \\
{1} & 0
\end{matrix} \right)=\left( \begin{matrix}{\frac{25}{36}} &{0} \\[6pt]
{\frac{5}{6}} & 0
\end{matrix} \right)
=\frac{5}{6} U_2,
\end{eqnarray*}
\begin{eqnarray}\label{eq:2U2}
\|U_2U_2\|_{\infty}&\leq&\frac{5}{6}
\end{eqnarray}
\end{itemize}

We consider now the product of $s$ matrices of the form $U_{i_1}\cdots U_{i_{s-1}}U_2$ with $i_1,\dots,i_{s-1}\in\{1,2\}$. We observe that we can find at most $\frac{s}{2}$ for $s$ even ($\frac{s-1}{2}$ for $s$ odd) couples of the type $U_1U_2$ and $U_2U_2$. What remains is some isolated $U_2$ which has norm equal to $1$ (we observe that we can't have some isolated $U_1$ because of $U_1U_1=U_2U_1$) .\\
So, by (\ref{eq:2U1}) and (\ref{eq:2U2}) there exists $0<\lambda_2<1$ such that, if $s$ is even

\begin{eqnarray}\label{eq:prodU}
\|U_{i_1}\cdots U_{i_{s-1}}U_2\|_{\infty}&\leq& \left(\lambda_2\right)^{\frac{s}{2}}
\end{eqnarray}
and if $s$ is odd
\begin{eqnarray}\label{eq:prodUU}
\|U_{i_1}\cdots U_{i_{s-1}}U_2\|_{\infty}&\leq& \left(\lambda_2\right)^{\frac{s-1}{2}}
\end{eqnarray}

\item We consider now the case of a product of $n_1$ matrices \\$A_{\ell}(j_{n_1})A_{\ell}(j_{n_{1}-1})\cdots A_{\ell}(j_{1})$ such that $a_{j_{1}}, a_{j_{n_1}+1}\geq M $ and $a_{j_2},a_{j_3},\dots,a_{j_{n_1}}<M$ (we are considering products of the type $U_1BB\cdots BBU_2$ ) Under these hypotheses, in the limit case we have that
\begin{eqnarray*}
U_2=A_{\ell}(j_{1})&\leq& \left( \begin{matrix}{\frac{5}{6}} &{0} \\
{1} & {0 }
\end{matrix} \right),
\end{eqnarray*}
\begin{eqnarray*}
U_1=A_{\ell}(j_{n_1})&\leq& \left( \begin{matrix}{\frac{5}{6}} &{\frac{1}{3}} \\
{1} & 0
\end{matrix} \right)
\end{eqnarray*}
and for all $k\in\lbrace{2,n_1-1}\rbrace$
\begin{eqnarray*}
B=A_{\ell}(j_{k})&\leq& \left( \begin{matrix}{\frac{1}{2}} &{\frac{1}{3}}. \\
{1} & 0
\end{matrix} \right)
\end{eqnarray*}

We assume that $n_1$ is even and $n_1>2$. We have that:

\begin{eqnarray*}
U_1B\dots\dots BU_2&\leq&  \left( \begin{matrix}{\frac{5}{6}} &{\frac{1}{3}} \\
{1} & 0
\end{matrix} \right)\left( \begin{matrix}{\frac{1}{2}} &{\frac{1}{3}} \\
{1} & 0
\end{matrix} \right)^{n_1-4}\left( \begin{matrix}{\frac{1}{2}} &{\frac{1}{3}} \\
{1} & 0
\end{matrix} \right)^{2}\left( \begin{matrix}{\frac{5}{6}} &{0} \\
{1} & 0
\end{matrix} \right)\\
&\leq&  \frac{5}{6}\left( \begin{matrix}{\frac{5}{6}} &{\frac{1}{3}} \\
{1} & 0
\end{matrix} \right)\left( \begin{matrix}{\frac{1}{2}} &{\frac{1}{3}} \\
{1} & 0
\end{matrix} \right)^{n_1-4}\left( \begin{matrix}{\frac{1}{2}} &{\frac{1}{3}} \\
{1} & 0
\end{matrix} \right)\left( \begin{matrix}{1} &{0} \\
{1} & 0
\end{matrix} \right)\\
&\leq& \frac{5}{6}\left( \begin{matrix}{\frac{5}{6}} &{\frac{1}{3}} \\
{1} & 0
\end{matrix} \right)\left( \begin{matrix}{\frac{1}{2}} &{\frac{1}{3}} \\
{1} & 0
\end{matrix} \right)^{n_1-4}\left( \begin{matrix}{\frac{5}{6}} &{0} \\
{1} & 0
\end{matrix} \right)\\
&\leq& \left(\frac{5}{6}\right)^\frac{n_1-2}{2}\left( \begin{matrix}{\frac{5}{6}} &{\frac{1}{3}} \\
{1} & 0
\end{matrix} \right) \left( \begin{matrix}{\frac{5}{6}} &{0} \\
{1} & 0
\end{matrix} \right)\\
&\leq&  \left(\frac{5}{6}\right)^\frac{n_{1}-2}{2}\left( \begin{matrix}{\frac{37}{36}} &{0} \\
{1} & 0
\end{matrix} \right)\\
&\leq& \frac{37}{36} \frac{5}{6}\left(\frac{5}{6}\right)^\frac{n_{1}-4}{2}\left( \begin{matrix}{1} &{0} \\
{1} & 0
\end{matrix} \right)\\
&\leq&\left(\frac{8}{9}\right)^\frac{n_{1}-2}{2}\left( \begin{matrix}{1} &{0} \\
{1} & 0
\end{matrix} \right).
\end{eqnarray*}

We assume now that $n_1$ is odd, $n_1\geq 3$, then:

\begin{eqnarray*}
U_1B\dots\dots BU_2&\leq&  \left( \begin{matrix}{\frac{5}{6}} &{\frac{1}{3}} \\
{1} & 0
\end{matrix} \right)\left( \begin{matrix}{\frac{1}{2}} &{\frac{1}{3}} \\
{1} & 0
\end{matrix} \right)\left( \begin{matrix}{\frac{1}{2}} &{\frac{1}{3}} \\
{1} & 0
\end{matrix} \right)^{n_1-3}\left( \begin{matrix}{\frac{5}{6}} &{0} \\
{1} & 0
\end{matrix} \right)\\
&\leq&  \left(\frac{5}{6}\right)^\frac{n_1-3}{2}\left( \begin{matrix}{\frac{5}{6}} &{\frac{1}{3}} \\
{1} & 0
\end{matrix} \right) \left( \begin{matrix}{\frac{1}{2}} &{\frac{1}{3}} \\
{1} & 0
\end{matrix} \right)\left( \begin{matrix}{\frac{5}{6}} &{0} \\
{1} & 0
\end{matrix} \right)\\
&\leq&  \left(\frac{5}{6}\right)^\frac{n_{1}-1}{2}\left( \begin{matrix}{\frac{5}{6}} &{\frac{1}{3}} \\
{1} & 0
\end{matrix} \right)\left( \begin{matrix}{1} &{0} \\
{1} & 0
\end{matrix} \right)\\
&\leq&  \frac{7}{6}\frac{5}{6}\left(\frac{5}{6}\right)^\frac{n_{1}-3}{2}\left( \begin{matrix}{1} &{0} \\
{1} & 0
\end{matrix} \right)\\
&\leq&  \left(\frac{35}{36}\right)^\frac{n_{1}-1}{2}\left( \begin{matrix}{1} &{0} \\
{1} & 0
\end{matrix} \right).
\end{eqnarray*}

So we can conclude that for each sequence of $n_1$ matrices $U_1B\dots BU_2$, there exists $0<\lambda_3<1$ such that

 \begin{eqnarray}\label{eq:prodUBUinp}
\|U_1BB\cdots BBU_2\|_{\infty}\leq\lambda_3^{\frac{n_1-2}{2}}.
\end{eqnarray}
 \end{itemize}
After these first observations, we fix $n\in\N$, and $2\leq i\leq n-1$ and we estimate the following quantity:
 $$\left\|\prod_{j = i}^n A_{\ell}(j)\right\|_{\infty}.$$

 \begin{lemma}\label{lem:prodas}
 Let $n\in\N$, $2\leq i\leq n-1$ and $A:=\prod_{j = i}^n A_{\ell}(j)$.\\
 Then there exists a constant $s>0$ such that for each term $A_{\ell}(k)\circ\dots\circ A_{\ell}(k-s)$ of $A$ of length $s$,
 $$\|A_{\ell}(k)\dots A_{\ell}(k-s)\|_{\infty}\leq \omega$$
with $\omega\in(0,1)$.
 \end{lemma}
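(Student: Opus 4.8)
The plan is to exploit the three contraction estimates already established for the model matrices, namely \eqref{eq:prodB}, \eqref{eq:prodU}--\eqref{eq:prodUU} and \eqref{eq:prodUBUinp}, and to argue that any sufficiently long product $A_{\ell}(k)\cdots A_{\ell}(k-s)$ must contain at least one of the three ``good'' block patterns whose norm we control, together with a trivial bound on the remaining factors. First I would fix the integer $M>1$ once and for all and decompose the product $A=\prod_{j=i}^n A_{\ell}(j)$ according to the three types $B$, $U_1$, $U_2$ introduced above, so that $A$ is a concatenation of maximal runs of the form $B^r$, of the form $U_1BB\cdots BU_2$, and of the form $U_{i_1}\cdots U_{i_{s-1}}U_2$; as observed just before the lemma, the constraint from case $(1)$ forbids isolated blocks $BU_1B$ or $BU_2B$, so this decomposition is genuinely into blocks of these three shapes plus possibly one incomplete block at each end.

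Next I would make the choice of $s$. Since each $A_{\ell}(j)$ has entries bounded by a uniform constant, say all entries are $\le C_0$ with $C_0$ depending only on $\ell$ (indeed $\le \tfrac12 + \tfrac13 \le 1$ in the range $\ell\ge3$, but a crude bound suffices), we have $\|A_{\ell}(k)\cdots A_{\ell}(k-s)\|_{\infty}\le (2C_0)^{s}$ trivially. The point is rather to pick $s$ large enough that a window of length $s$ in the block decomposition is forced to contain a complete good block of length at least some fixed $s_0$: if $s$ exceeds, say, $2(M+2)$, then among $s$ consecutive matrices either there is a run $B^r$ with $r\ge s_0$, or a run $U\cdots U$ of length $\ge s_0$, or a mixed block $U_1 B\cdots B U_2$ whose length is $\ge s_0$, for a suitable $s_0=s_0(M)$; this is a purely combinatorial pigeonhole argument on the word in the letters $B,U_1,U_2$. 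Then I would write the window as $P_1\,(\text{good block})\,P_2$, bound $\|P_1\|_{\infty},\|P_2\|_{\infty}$ by $(2C_0)^{s}$, bound the good block by $\lambda^{s_0}$ where $\lambda=\max(\lambda_1,\lambda_2,\lambda_3)<1$ via \eqref{eq:prodB}, \eqref{eq:prodU}, \eqref{eq:prodUU}, \eqref{eq:prodUBUinp}, and use submultiplicativity of $\|\cdot\|_{\infty}$ to conclude
\begin{equation*}
\|A_{\ell}(k)\cdots A_{\ell}(k-s)\|_{\infty}\le (2C_0)^{2s}\,\lambda^{s_0}.
\end{equation*}
Finally I would fix $s_0$ (hence $s$) large enough that $(2C_0)^{2s}\lambda^{s_0}<1$, which is possible because $s_0$ grows linearly in the chosen threshold while $s$ is a fixed multiple of $M$; setting $\omega:=(2C_0)^{2s}\lambda^{s_0}\in(0,1)$ finishes the proof.

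The main obstacle I anticipate is the bookkeeping in the combinatorial step: one has to be careful that the ``incomplete'' blocks at the two ends of a length-$s$ window do not destroy the argument, and that the forbidden patterns $BU_1B$, $BU_2B$ really do force long homogeneous runs once $a_{j}\ge M$ or $a_{j+1}\ge M$ happens. A secondary subtlety is that the mixed block estimate \eqref{eq:prodUBUinp} was derived only for $n_1\ge3$ and the pure-$U$ estimates for $s\ge2$, so the threshold $s_0$ must be taken large enough to stay in the regime where all three estimates apply with a definite exponential gain; choosing $s_0$ comfortably larger than $M$ takes care of this. Everything else is a routine submultiplicativity and pigeonhole computation, and no new analytic input beyond the already-proven block bounds is needed.
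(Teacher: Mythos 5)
There is a genuine gap in your final step. You isolate a single ``good'' block of length $s_0$ inside the window and bound the two complementary pieces $P_1,P_2$ trivially by $(2C_0)^{s}$. But the individual factors are not contractions in $\|\cdot\|_{\infty}$ (for instance $\|U_1\|_{\infty}=\tfrac56+\tfrac13=\tfrac76>1$, and your cruder bound $(2C_0)^{s}\geq 2^{s}$ is far worse), so the trivial bound on the complement grows exponentially in $s$, while the gain from the one good block is only $\lambda^{s_0}$ with $s_0=s_0(M)$ a fixed number produced by the pigeonhole. The inequality $(2C_0)^{2s}\lambda^{s_0}<1$ therefore cannot be arranged: $s_0$ is not a free parameter but is dictated by $M$ and the combinatorics, and enlarging $s$ (or $M$) inflates the bad factor at least as fast as any contraction you can extract. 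To close the argument you would need the complementary pieces $P_1,P_2$ to have norm bounded by a constant \emph{independent of their length} --- and proving that is exactly the full block analysis you are trying to bypass.

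The paper's proof avoids this by decomposing the \emph{entire} term, not just one sub-block, into consecutive blocks of the three admissible shapes (a leading $B\cdots BU_2$, then blocks of type $U_1B\cdots BU_2$ or $U_{i_1}\cdots U_{i_{s-1}}U_2$, then a trailing $U_1B\cdots B$), applying (\ref{eq:prodB}), (\ref{eq:prodU}), (\ref{eq:prodUU}) and (\ref{eq:prodUBUinp}) to every block and multiplying. Only the two boundary blocks contribute a multiplicative constant $C$, while every block contracts by $\lambda^{n_j}$, so the whole term is bounded by $C^{2}\lambda^{\tilde r}$ with $\tilde r$ comparable to the full length; one then simply chooses $s$ with $C^{2}\lambda^{s}<\omega<1$. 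No pigeonhole is needed, because the exclusion of the patterns $BU_1B$ and $BU_2B$ makes the decomposition into these shapes exhaustive. Your combinatorial observations about the forbidden patterns are correct and would survive, but the ``one good block plus crude bounds on the rest'' strategy must be replaced by this exhaustive covering.
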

 \begin{proof}
We consider a term $A_{\ell}(k_r)\circ\dots\circ A_{\ell}(k_1)$ of $A$.
 \begin{itemize}
 \item If for all $k_1<j<k_r$ $A_{\ell}(j)=B$, then by (\ref{eq:prodB}) there exist two constants $C>0$ and $\lambda_1\in(0,1)$ such that:
  $$\|A_{\ell}(k_r)\dots A_{\ell}(k_1)\|_{\infty}\leq C\lambda_1^r. $$
We observe that we have the same estimation for a sequence of the type $U_1\underbrace{B\dots B}_{r}U_2$.
 \item If $A_{\ell}(k_r)\circ\cdots\circ A_{\ell}(k_1)$ is composed first of a sequence of the type $B\cdots BU_2$, then of sequences of the type $U_1B\cdots BU_2$ or $U_{\{1,2\}}\cdots U_2$ and lastly of a sequence of the type $U_1B\cdots B$, then by (\ref{eq:prodB}), (\ref{eq:prodU}), (\ref{eq:prodUU}) and (\ref{eq:prodUBUinp}) there exists $C>0$ and $\lambda_2\in(0,1)$ such that $ \|\underbrace{BB\cdots BB}_{n_1}U_2\underbrace{\cdots}_{n_2}\cdots\underbrace{\cdots}_{n_k}U_1\underbrace{BB\cdots BB}_{n_{k+1}}\|_{\infty}$ is less or equal than
 \begin{eqnarray*}
C \lambda_2^{n_1}\lambda_2^{n_2}\cdots\lambda_2^{n_k}C\lambda_2^{n_{k+1}}= C^2\lambda_2^{n_1+n_2+\dots+n_k+n_{k+1}}=C^2\lambda_2^{\tilde{r}}.
 \end{eqnarray*}
We denote now $\lambda=\max\lbrace{\lambda_1, \lambda_2\rbrace}$ and $s>0$ such that $\max\lbrace{C\lambda^s, C^2\lambda^s\rbrace}<\omega<1$.
  \end{itemize}
 \end{proof}

Finally, by Lemma \ref{lem:prodas} there exist $s>0$ and $\omega\in(0,1)$ such that, if $ \prod_{j = i}^n A_{\ell}(j)$ contains $k$ terms of length $s$, then:
\begin{equation}\label{eq:finestsperons}
\left\|\prod_{j = i}^n A_{\ell}(j)\right\|_{\infty}\leq C\omega^{k}
\end{equation}
with the constant $C>0$ which is the upper bound for the last term of length $n-i-ks +1$.
\par

We observe that the estimations found are valid in the limit case, we have in fact used the hypothesis that $\epsilon=0$ (see (\ref{eq:epsilon})). In the general case, we can use the continuity of the norm $\|\cdot\|_{\infty}$ as a function of $\epsilon$ in order to have the same type of estimation as in (\ref{eq:finestsperons}).\\

In conclusion, by inequality (\ref{vit}), the sequence $\nu_n=-\log \beta_n$ is bounded and by Remark \ref{finpr} and Propositions \ref{n7} and \ref{n1}, the proof of the second claim of Theorem \ref{sw1} is complete.

\section{Proof of Theorem \ref{3}}
\subsection{Basic Definitions and Facts about Cherry flows}
In this section we provide some definitions and properties concerning Cherry flows. We will state them in a compact form. More comments and details can be found in Chapter $4$ of \cite{my4}.

\begin{definition}
A Cherry flow is a $\Cinf$ flow on the torus $\T$ without closed trajectories which has exactly two singularities, a sink and a saddle, both hyperbolic.
\end{definition}
The first example of such a flow was given by Cherry in $1938$, see \cite{Cherry}.
\begin{proposition}
Let $X$ be a Cherry flow and let $\Sing(X)$ be the set of the singularities of $X$. There exists a closed $\Cinf$ curve, $C$ on $\T\setminus \Sing(X)$ with the following properties:
\begin{itemize}
\item $C$ is everywhere transversal to the flow;
\item $C$ is not retractable to a point.
\end{itemize}
\end{proposition}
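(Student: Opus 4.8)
The plan is to produce $C$ by isotoping a suitable stable or unstable separatrix of the saddle, or more directly, by exhibiting a short arc transverse to $X$ that can be closed up using the topology of the torus. First I would record the local picture at the hyperbolic saddle $p$: there are two stable and two unstable separatrices, and since the flow has no closed orbits and exactly one sink $q$, every regular trajectory is forward-asymptotic either to $q$ or to a separatrix configuration, and backward-asymptotic to the unstable manifold of $p$ (up to the global structure theorem for flows on $\T$ without closed orbits, due to the Poincaré–Bendixson-type analysis on the torus). In particular the unstable separatrices of $p$ are dense in a "Cantor-like" quasi-minimal set or wind around the torus with some irrational slope; the key qualitative fact I will use is that there exists a trajectory arc that crosses the torus in a homotopically non-trivial way while staying in $\T\setminus\Sing(X)$.

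The main step is then: pick a small transversal segment $\Sigma$ through a regular point $x_0$ far from both singularities (a "local cross-section", which exists at every regular point by the flow box theorem). Follow the trajectory of $x_0$ forward; by the structure of the Cherry flow it returns to $\Sigma$ (the first-return map to a global section is exactly a map of class $\L$, as recalled in the introduction — this is the whole point of the construction). Concatenating the trajectory arc from $x_0$ back to $\Sigma$ with a short sub-arc of $\Sigma$ produces a closed loop $\widetilde C$ that is transverse to $X$ except along that short sub-arc of $\Sigma$, where it is instead transverse by construction as well; smoothing the two corners (a standard $\Cinf$ rounding that can be done keeping transversality, since at a corner the two incoming directions are both transverse to $X$ and lie in an open half-plane of directions) yields a $\Cinf$ closed curve everywhere transverse to the flow. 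Non-retractability follows because the trajectory arc realizes a non-trivial element of $\pi_1(\T)$: if $\widetilde C$ were contractible it would bound a disc, and a flow transverse to the boundary of a disc must have a singularity inside (index argument / Poincaré–Hopf), contradicting that both singularities of $X$ are accounted for and can be chosen to lie outside that disc, or more simply contradicting the existence of the quasi-minimal set which must meet every non-contractible transversal.

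The hard part will be making precise the claim that the forward trajectory of a well-chosen regular point returns to the local section while closing up homotopically non-trivially, i.e. justifying that a Cherry flow genuinely admits a global Poincaré section; this is classical (it is implicit in \cite{MvSdMM} and \cite{my4}) but requires the non-existence of closed orbits plus control of the separatrices of the saddle to rule out the trajectory spiralling into the sink before returning. I would handle this by invoking the global classification of $\Cinf$ flows on $\T$ with finitely many singularities and no periodic orbits: the complement of a neighborhood of the sink retracts onto a graph carrying the recurrent dynamics, and any essential transversal to that graph extends to the desired curve $C$. The corner-smoothing and the index obstruction to retractability are then routine and I would not belabor them.
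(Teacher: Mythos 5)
The paper does not actually prove this proposition: it is stated as background and deferred to \cite{my4} and the references on surface flows (e.g.\ \cite{ABZ}), so the relevant comparison is with the classical ``closed transversal through a recurrent orbit'' lemma. Measured against that, your plan contains one genuine error and one circularity. The error: an orbit arc of $X$ is \emph{tangent} to $X$ at every point, not transverse, so the loop obtained by concatenating the trajectory arc from $x_0$ to its return point with a sub-arc of $\Sigma$ is non-transverse along its entire long piece; smoothing the two corners cannot repair this. The correct construction takes a long flow box (a rectified neighborhood $[0,T]\times(-\epsilon,\epsilon)$ of the orbit segment, in which orbits are horizontal lines and $\Sigma$ contains the two vertical ends) and replaces the orbit arc by a curve inside the box that is a graph over the time coordinate joining the appropriate endpoints on the two vertical ends; such a graph is automatically transverse to the horizontal flow lines and closes up with a piece of $\Sigma$ into a $\Cinf$ transverse loop. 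The circularity: you justify the return of the orbit to $\Sigma$ by appealing to the first-return map of the global Poincar\'e section, but the existence of that global section is exactly what this proposition (together with the Fact following it) establishes. What you may use instead is that a Cherry flow has a non-trivially recurrent trajectory (equivalently, that not every orbit falls into the sink or saddle --- this is where ``no closed orbits'' plus the genus of $\T$ enters), and recurrence alone gives the return to a local transversal.

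Your non-retractability argument is also incomplete as stated. Poincar\'e--Hopf says a disc bounded by a closed transversal contains singularities of total index $+1$; since the sink has index $+1$, the case where the disc contains exactly the sink is \emph{not} excluded by the index count, and you have no control allowing you to ``choose'' the singularities to lie outside the disc. The standard fix uses the recurrent orbit through which $C$ was built: if $C$ bounded a disc $D$ with the flow pointing inward along $C$, the recurrent orbit would enter $D$ and could never exit (exiting would reverse the transverse crossing direction), so by Poincar\'e--Bendixson inside $D$ it would converge to the sink, contradicting non-trivial recurrence. With the flow-box construction, the recurrence input, and this trapping argument supplied, the proof closes; as written, all three points are gaps.
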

\begin{fact}
Every Cherry flow admits a Poincar\`e section that can be identified with the unit circle $\S$.
\end{fact}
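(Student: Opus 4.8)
The plan is to take for the Poincar\'e section the curve $C$ produced by the preceding Proposition and to check, along the lines of Chapter~$4$ of \cite{my4}, that it carries a well-defined first-return map turning it into a cross-section identifiable with $\S$. First I would observe that $C$ is a compact, connected, boundaryless $\Cinf$ submanifold of $\T$, hence $\Cinf$-diffeomorphic to a circle by the classification of one-dimensional manifolds; fixing once and for all such a diffeomorphism $\varphi\colon C\to\S$ identifies $C$ with $\S$. Since $C$ is connected and the flow is transversal to it, the vector field crosses $C$ everywhere in the same direction; and since $C$ is not retractable to a point, cutting $\T$ along $C$ produces an open annulus $A$, bounded by two copies $C^{+}$ and $C^{-}$ of $C$, which the flow enters through $C^{-}$ and leaves through $C^{+}$. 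Thus $C$ is genuinely transversal to $X$ and is topologically a circle.

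Next I would analyse the return dynamics. For $p\in C$ there are two possibilities: either the forward semi-orbit of $p$ meets $C^{+}$ at a first positive time, so that the first-return map $f$ is defined at $p$; or that semi-orbit stays in $\overline{A}$ for all $t\ge 0$. In the latter case $\overline{A}$ is a compact annulus carrying a flow with only two singularities and, crucially, no periodic orbit, so the Poincar\'e--Bendixson theorem (applied in $\overline{A}$, or after passing to the universal cover) forces the $\omega$-limit set of $p$ to be the sink, the saddle $\sigma$, or a graphic built from $\sigma$ and its separatrices. One then shows that the set of non-returning points of $C$ is, up to finitely many points, a single closed arc $\overline{U}$: its interior is the trace on $C$ of the component of the sink's basin abutting $\sigma$, and its two endpoints are the points where the local stable separatrices of $\sigma$ cross $C$. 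Extending $f$ to be constant on $\overline{U}$ yields a continuous circle map $f\colon\S\to\S$ with $U$ as its flat interval, and a local computation near $\sigma$ --- using its hyperbolicity, the critical exponent being governed by the ratio $|\lambda_2|/\lambda_1$ --- shows that $f$ has the prescribed degenerate form at $\partial U$, so that $f\in\L$. Hence $C\cong\S$ is the announced Poincar\'e section.

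The main obstacle is the identification of the non-returning set with the single arc $\overline{U}$ terminating on the stable separatrices of $\sigma$: one has to exclude, using the specific structure of Cherry flows --- absence of closed trajectories, hyperbolicity of $\sigma$, and the fact that the quasi-minimal set is nowhere dense and links the two separatrices of $\sigma$ --- that the trace of the sink's basin on $C$ splits into several components or fails to be bounded by $W^{s}(\sigma)$. This topological analysis, together with the normal-form computation at the saddle, is carried out in detail in \cite{my4}.
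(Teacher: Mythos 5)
Your first paragraph is exactly the intended argument (which the paper itself leaves implicit, deferring to Chapter~4 of \cite{my4}): the transversal curve $C$ from the preceding Proposition is a compact connected one-manifold without boundary, hence diffeomorphic to $\S$, and serves as the Poincar\'e section. The rest of your proposal --- the return-map analysis, the flat interval $U$, and the critical exponent --- is correct in outline but actually proves the subsequent Fact about the first-return map, not the statement at hand.
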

\begin{fact}\label{critexp}
Let $X$ be a Cherry flow with Poincar\`e section $\S$. The first return function $f$ to $\S$ belongs to our class $\L$ with flat interval $U$ which is composed by all points of $\S$ which are attracted by the singularities. Moreover if $\lambda_1>0>\lambda_2$ are the two eigenvalues of the saddle point, then $f$ has critical exponent $\ell=\frac{|\lambda_2|}{\lambda_1}$.
\end{fact}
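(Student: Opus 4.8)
The plan is to construct $f$ directly from the flow $X$, to identify $U$ with the trace on $\S$ of the basin of the sink, and to obtain the critical exponent by factoring the first-return map through the hyperbolic saddle and computing its passage (Dulac) map.

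First I would fix a global transversal $\S$ (it exists by the preceding Proposition and Fact) and let $A\subset\S$ be the set of points whose forward $X$-orbit converges to the sink. Since the sink is a hyperbolic attractor its basin is open in $\T$, so $A$ is open in $\S$; by the structure theory of Cherry flows (see \cite{MvSdMM}, \cite{my4}), $A$ is a single open sub-arc, which is the flat interval $U$, and its two endpoints $a,b$ lie on the two stable separatrices of the saddle $p$ — indeed a point of $\partial A$ is not attracted to the sink, so by Poincar\'e--Bendixson on $\T$ and the absence of closed orbits its $\omega$-limit is $\{p\}$. For $x\in\S\setminus\overline U$ the forward orbit meets $\S$ again and I define $f(x)$ to be this first return; for $x\in\overline U$ I set $f(x):=c$, where $c\in\S$ is the first intersection with $\S$ of the branch of the unstable manifold $W^{u}(p)$ that does not enter the sink. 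A flow-box argument (the flow is $\Cinf$, $\S$ is a $\Cinf$ transversal) shows $f$ is $\Cinf$ with positive derivative on $\S\setminus\{a,b\}$; $f$ has degree one and is weakly order preserving because $X$ has no closed orbit and $\S$ is non-retractable; and $f$ is continuous at $a$ and $b$ since an orbit starting just outside $U$ shadows $W^{s}(p)$, lingers near $p$, and leaves along $W^{u}(p)$, hence crosses $\S$ arbitrarily close to $c$.

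It remains to find the local form of $f$ near $b$ (the point $a$ being symmetric). I would choose small transversals $\Sigma_{\mathrm{in}},\Sigma_{\mathrm{out}}$ to $X$ near $p$, with $\Sigma_{\mathrm{in}}$ crossing $W^{s}(p)$ and $\Sigma_{\mathrm{out}}$ crossing the relevant branch of $W^{u}(p)$, so that for $x$ in a one-sided neighbourhood of $b$ the return orbit runs $\S\to\Sigma_{\mathrm{in}}\to\Sigma_{\mathrm{out}}\to\S$ and therefore $f=\phi_{\mathrm{out}}\circ D\circ\phi_{\mathrm{in}}$ there, with $\phi_{\mathrm{in}},\phi_{\mathrm{out}}$ being $\Cinf$ diffeomorphisms (holonomies along compact orbit segments) and $D$ the passage map of the saddle. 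In linearizing coordinates $(u,v)$ near $p$ with $\dot u=\lambda_1 u$, $\dot v=\lambda_2 v$, $W^{s}=\{u=0\}$, $W^{u}=\{v=0\}$, taking $\Sigma_{\mathrm{in}}=\{v=1\}$ parametrized by $u\ge 0$ and $\Sigma_{\mathrm{out}}=\{u=1\}$ parametrized by $v\ge 0$, the trajectory issued from $(u_0,1)$ reaches $u=1$ at time $T=-\lambda_1^{-1}\ln u_0$, arriving at $v=e^{\lambda_2 T}=u_0^{-\lambda_2/\lambda_1}=u_0^{\ell}$ with $\ell=|\lambda_2|/\lambda_1$. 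Thus $D(u_0)=u_0^{\ell}$ up to the $\Cinf$ coordinate changes relating $\Sigma_{\mathrm{in}},\Sigma_{\mathrm{out}}$ to the linearizing charts, and composing with $\phi_{\mathrm{in}},\phi_{\mathrm{out}}$ yields $f(x)=h_r\!\left((x-b)^{\ell}\right)$ for a $C^{2}$ (in fact $\Cinf$) diffeomorphism $h_r$; the same computation at $a$ gives the same $\ell$. Hence $f\in\L$ with critical exponent $\ell=|\lambda_2|/\lambda_1$.

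The main obstacle is the regularity of the passage map $D$, i.e.\ that the formal identity $D(u_0)=u_0^{\ell}$ holds up to a $C^{2}$ diffeomorphism. When $\lambda_1,\lambda_2$ are non-resonant the saddle is $C^{k}$-linearizable for every $k$ (Sternberg--Chen), so the displayed computation is exact in $\Cinf$ coordinates. When $\ell$ is rational there are resonances, the linearization must be replaced by a polynomial normal form, and $D$ acquires lower-order corrections of the shape $u_0^{\ell}\bigl(a_0+O(u_0^{\varepsilon})\bigr)$ (possibly with logarithmic terms); one then has to check that this is still $C^{2}$-conjugate to $x\mapsto x^{\ell}$ on a one-sided neighbourhood of $0$, which is precisely the kind of estimate performed in the analysis of Cherry flows in \cite{MvSdMM} and \cite{my4}. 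The remaining ingredients — that $U$ is a single arc, that $f$ is $\Cinf$ off $\{a,b\}$, and the continuity, monotonicity and degree statements — are routine qualitative theory of flows on surfaces.
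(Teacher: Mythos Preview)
The paper gives no proof of this Fact: it is stated as background, with the details deferred to Chapter~4 of \cite{my4} (and implicitly to \cite{MvSdMM}). Your sketch is precisely the standard argument one finds in those references --- factor the first return through the saddle as $\phi_{\mathrm{out}}\circ D\circ\phi_{\mathrm{in}}$ and compute the Dulac passage map in linearizing coordinates to obtain $D(u_0)=u_0^{|\lambda_2|/\lambda_1}$ --- so there is nothing to compare: you have supplied what the paper outsources.

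Your flagging of the resonance issue is the right place to put the emphasis. In the non-resonant case Sternberg gives $C^k$ linearization for every $k$ and your displayed computation is exact; in the resonant case one replaces linearization by a finite-smoothness normal form and obtains $D(u_0)=u_0^{\ell}\psi(u_0)$ with $\psi$ of class $C^2$ and $\psi(0)>0$, which is what is needed to absorb the correction into $h_r$. That step is genuinely where the work lies, and it is carried out in \cite{MvSdMM} and \cite{my4} rather than here. One small caution: you wrote ``possibly with logarithmic terms''; for a $C^\infty$ saddle the relevant normal-form theory produces a remainder that is still $C^2$-tame (no $\log$ at leading order when $\ell>1$), so the conjugacy to $x\mapsto x^{\ell}$ goes through --- but this deserves a precise citation rather than a parenthetical, since the membership $f\in\L$ hinges on it.
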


\begin{definition}
Let $\gamma$ be a non-trivial recurrent trajectory. Then the closure $\overline{\gamma}$ of $\gamma$ is called a quasi-minimal set.
\end{definition}
\begin{fact}\label{prod}
Every Cherry field has only one quasi-minimal set, which is locally homeomorphic to the Cartesian product of a Cantor set $\Omega$ and a segment $I$. Moreover $\Omega$ is equal to the non-wandering set\footnote{the set of the points $x$ such that for any open neighborhood $V\ni x$ there exists an integer $n>0$ such that the intersection of $V$ and $f^n(V)$ is non-empty.} of the first return function $f$ that is $\Omega=\S\setminus\cup_{i=0}^{\infty}f^{-i}(U)$.
\end{fact}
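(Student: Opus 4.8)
The plan is to push everything down to the first return map $f\in\L$ on the Poincar\'e section $\S$ (Fact \ref{critexp}) and to analyse the compact invariant set
\[
\Omega:=\S\setminus\bigcup_{i\geq 0}f^{-i}(U).
\]
Since $U$ consists exactly of the points of $\S$ attracted by the saddle and the sink (Fact \ref{critexp}), a trajectory of $X$ crosses $\S$ infinitely often in forward time precisely when its crossing point lies in $\Omega$, and the same is true in backward time. Hence the ``recurrent part'' of the flow is modelled by $f|_\Omega$, and the statement splits into: (a) $\Omega$ is a Cantor set which coincides with the non-wandering set of $f$ and with the unique minimal set of $f$; and (b) $\Omega$ is the crossing set of \emph{every} non-trivial recurrent trajectory, from which uniqueness and the local product structure follow.

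For (a) I would first record that the intervals $f^{-i}(U)$, $i\geq 0$, are pairwise disjoint open intervals --- they are the ``gaps'' that build the dynamical partitions $\mathscr{P}_n$ of Section \ref{dynamicalpartition}. (Disjointness: $f^{-i}(U)\cap f^{-j}(U)\neq\emptyset$ with $i<j$ would force $\underline{j-i}\in U$; but the semiconjugacy $h\circ f=R_\rho\circ h$ with $\rho$ irrational sends the distinct points $\underline m$, $m\geq 1$, off the point $h(U)$, so $\underline m\notin\overline U$ for all $m\geq 1$.) Thus $\Omega$ is closed with dense complement, and by Corollary \ref{cor:trouzero} together with Proposition \ref{figo1} every element of $\mathscr{P}_n$ has length tending to $0$, so $\Omega$ contains no interval. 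The same computation shows no two gaps $f^{-i}(U),f^{-j}(U)$ are adjacent (a common endpoint would give $\underline{j-i}\in\partial U$ with $j-i\geq 1$), whence $\Omega$ has no isolated points and is a Cantor set. Next, $h$ is constant on each gap and maps $\Omega$ onto $\S$; since $\{R_\rho^n(h(x))\}_n$ is dense for every $x$, the forward $f$-orbit of each point of $\Omega$ is dense in $\Omega$, so $\Omega$ is the unique minimal set of $f$ and consists of non-wandering points. Conversely, a point $x$ with $f^i(x)\in U$ has a neighbourhood $V$ for which $f^n(V)$ is eventually the single point $\underline{n-i}$, which never re-enters the (fixed) gap containing $x$, so $x$ is wandering. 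Therefore the non-wandering set of $f$ is exactly $\Omega=\S\setminus\bigcup_{i\geq 0}f^{-i}(U)$.

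For (b), let $\gamma$ be a non-trivial recurrent trajectory. It is neither a singularity, nor a stable separatrix of the saddle, nor contained in the basin of the sink, so it crosses $\S$, and its forward and backward crossings form a bi-infinite $f$-orbit; recurrence forces this orbit into $\Omega$. Hence $\overline\gamma\cap\S$ is the closure of an orbit lying in $\Omega$, which by the density statement in (a) equals $\overline\Omega=\Omega$. So $\overline\gamma\cap\S=\Omega$ for \emph{every} non-trivial recurrent trajectory; since $\overline\gamma$ is the flow-saturation of $\overline\gamma\cap\S$ between consecutive returns, $\overline\gamma$ is one and the same set for all of them --- there is a unique quasi-minimal set. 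The local product structure then comes from the flow-box theorem: at a regular point of $\overline\gamma$, picking a small transversal $\S'$, the set $\overline\gamma$ is homeomorphic to $(\Omega\cap\S')\times I$ with $I$ a flow-interval, and $\Omega\cap\S'$ is a copy of a clopen piece of the Cantor set $\Omega$.

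I expect two points to require care. The routine-but-necessary one is the non-injectivity of $f$ at the endpoints $a,b$ of the flat interval (and along their backward orbit), i.e.\ the stable separatrices of the saddle and their histories: one must check these exceptional orbits never lie on a recurrent trajectory, so discarding them affects no $\overline\gamma$, and that the flow-saturation of $\Omega$ is well defined and closed. The genuinely delicate point is the behaviour near the saddle: the return time to $\S$ is unbounded on $\Omega$ (it blows up along points of $\Omega$ accumulating at $a$ and $b$), so $\overline\gamma$ accumulates at --- and contains --- the saddle, and the product description $\Omega\times I$ is literally valid only away from $\Sing(X)$; at the saddle one gets the singular fibre of the lamination and must describe it separately. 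The substantive analytic input throughout is the circle-map theory of class $\L$ (the semiconjugacy to $R_\rho$, disjointness and non-adjacency of the gaps, and the exponential shrinking of $\mathscr{P}_n$ from Corollary \ref{cor:trouzero} and Proposition \ref{figo1}); these are classical and can also be quoted from \cite{5aut}, \cite{deMvS} and \cite{my4}.
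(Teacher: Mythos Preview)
The paper does not prove this statement: it is recorded as a \emph{Fact} in Section 4.1, with the remark that ``more comments and details can be found in Chapter 4 of \cite{my4}''. So there is no proof in the paper to compare against; the result is quoted from the classical theory of Cherry flows (see also \cite{ABZ}, \cite{MvSdMM}).

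Your sketch is the standard argument and is essentially correct. The reduction to the Poincar\'e map $f\in\L$, the identification of the gaps with the preimages $f^{-i}(U)$, the Cantor structure of $\Omega$ via the semiconjugacy $h$ to the irrational rotation, the equality of $\Omega$ with the non-wandering set, and the flow-box picture for the local product structure are all handled correctly. Your disjointness and non-adjacency arguments for the gaps are fine once one notes that $h$ collapses each $f^{-i}(U)$ to the distinct point $R_\rho^{-i}(h(U))$; your alternative phrasing (``a common endpoint would force $\underline{j-i}\in\partial U$'') also works after tracking the endpoints through $f$. The wandering argument for points in $\bigcup_i f^{-i}(U)$ is right: for $V\subset f^{-i}(U)$ and $n>i$ the image $f^n(V)$ is the single point $\underline{n-i}\in\Omega$, hence disjoint from $V$.

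You have also correctly flagged the two places where a fully rigorous write-up must be careful: the countable exceptional set coming from the preimages of $\partial U$ (where $f$ fails to be injective), and the fact that the product description breaks down at the saddle, where the return time blows up. Both are handled in the references above; for the purposes of this paper they are exactly why the statement is quoted rather than proved.
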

By Fact \ref{prod}, Fact \ref{critexp} and the product formula for the Hausdorff dimension, in order to prove Theorem \ref{3} we need to calculate the Hausdorff dimension of $\Omega$ \footnote{For more details the reader can refer to the proof of Theorem 1.6 in \cite{my}. The procedure is exactly the same.}. More precisely we need the following result:
\begin{theorem}\label{mare}
Let $f$ be a function in $\L$ with critical exponent $\ell\geq 3$, then the non-wandering set $\Omega$ has Hausdorff dimension strictly greater than zero.
\end{theorem}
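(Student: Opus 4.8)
\textbf{Proof proposal for Theorem \ref{mare}.}
The plan is to show that $\Omega = \S \setminus \bigcup_{i=0}^{\infty} f^{-i}(U)$ has positive Hausdorff dimension by exploiting the bounded geometry established in Theorem \ref{sw1}(2) for $\ell \geq 3$. The guiding principle is that $\Omega$ is, up to the semiconjugacy $h$ of Section \ref{dynamicalpartition}, modelled on a Cantor set obtained by a nested sequence of coverings given by the intervals $\underline{-i}$ of the dynamical partitions $\mathscr{P}_n$, and that the relative gaps left at each stage are uniformly bounded below in the scale $\tau_n$. I would first record the standard mass-distribution (or Frostman-type) lemma: if one can construct a probability measure $\mu$ supported on $\Omega$ and constants $C>0$, $s>0$ such that $\mu(B(x,r)) \leq C r^{s}$ for all $x \in \Omega$ and all small $r$, then $\HD(\Omega) \geq s$. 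So the whole argument reduces to producing such a measure together with the exponent $s$.

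The construction of $\mu$ should be carried out along the dynamical partitions. At level $n$, the points of $\Omega$ lie in the preimages $\underline{-i}$ for $0 \leq i \leq q_{n+1}+q_n-1$ together with the Cantor residue inside the long and short gaps; passing from $\mathscr{P}_n$ to $\mathscr{P}_{n+2}$, each relevant interval containing part of $\Omega$ is subdivided according to \eqref{div}, and one distributes mass across the children. The key quantitative inputs are: (i) Proposition \ref{figo1}, which says each preimage $\underline{-i}$ in $\mathscr{P}_n$ occupies a definite proportion of each adjacent gap; (ii) Corollary \ref{cor:trouzero}, giving exponential decay of gap lengths, hence a genuine Cantor structure with controlled scales; and (iii) Theorem \ref{sw1}(2), i.e.\ $\tau_n \geq K > 0$, which guarantees that the ratio of the size of a child interval at level $n+2$ to its parent at level $n$ is bounded below uniformly in $n$. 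Combining (i)--(iii), the number of level-$(n+2)$ pieces inside a level-$n$ piece is controlled while their relative sizes are bounded below, so assigning mass proportionally (each child getting a definite fraction of its parent's mass) yields $\mu$ with the property that $\mu(\underline{-i}^{(n)}) \leq C \, |\underline{-i}^{(n)}|^{s}$ for a fixed $s>0$ depending only on the uniform bounds. A standard covering argument — any small ball $B(x,r)$ meets a bounded number of partition intervals of the scale comparable to $r$, using bounded geometry again — then upgrades this to the Frostman inequality $\mu(B(x,r)) \leq C' r^{s}$, and the mass-distribution lemma finishes the proof.

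The main obstacle I anticipate is the \emph{unbounded combinatorics}: the integers $a_{n}$ are not assumed bounded, so within a single long gap of $\mathscr{P}_n$ there are $a_{n+2}$ preimages of $U$ and $a_{n+2}$ long gaps of $\mathscr{P}_{n+1}$, a number that can be arbitrarily large. One must therefore be careful that the mass assigned to the deep chain of sub-gaps (the tail $\beta_n(i), \gamma_n(i)$ quantities of Section \ref{CPP}) does not force the local dimension to zero; this is precisely where the boundedness of $\beta_n$ away from zero — the hard-won consequence of Theorem \ref{n3} and the matrix-norm analysis — is essential, since it controls the geometry uniformly over the $a_{n+2}$ successive scalings inside each long gap. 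A secondary technical point is handling balls that straddle a preimage $\underline{-i}$ and its neighbouring gaps simultaneously; here one invokes Fact \ref{F1}, Proposition \ref{figo1} and bounded geometry to see that such a ball is comparable to a single partition interval of its scale, so no loss occurs. Once these uniformities are in place the exponent $s$ is explicit (it can be taken of the form $s = \log(\text{something})/\log(\text{something})$ in the uniform constants), and it is automatically strictly positive, which is all that is claimed.
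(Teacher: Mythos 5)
Your overall framework --- construct a probability measure on a Cantor subset of $\Omega$ along the dynamical partitions and invoke the mass distribution principle --- is the same as the paper's, and you correctly identify Theorem \ref{sw1}(2), Proposition \ref{figo1} and Corollary \ref{cor:trouzero} as the quantitative inputs. But the step where you distribute mass over \emph{all} the children produced by the subdivision \eqref{div} contains a genuine gap. You assert that ``the number of level-$(n+2)$ pieces inside a level-$n$ piece is controlled while their relative sizes are bounded below''; neither half of this holds in the unbounded regime. A long gap of $\mathscr P_n$ splits into $a_{n+2}$ long gaps, $a_{n+2}$ preimages of $U$ and one short gap, with $a_{n+2}$ unbounded, so the intermediate long gaps cannot all occupy a definite proportion of the parent (they are disjoint and packed inside it). The boundedness of $\beta_n$ and $\tau_n$, which you invoke to save the argument, only controls the \emph{first} and \emph{last} gaps of the subdivision relative to the parent; it says nothing about the intermediate ones, whose relative sizes genuinely degenerate when $a_{n+2}$ is large. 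For the same reason your ``secondary technical point'' (a ball of radius $r$ meets boundedly many partition intervals of comparable scale) fails: consecutive scales of the full partition are not uniformly comparable, so no uniform Frostman exponent comes out of this construction as stated.

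The missing idea, and the paper's resolution, is to \emph{prune} the partition rather than keep all children: one defines $\mathscr G_n\subset\mathscr P_n$ inductively by retaining, from the subdivision of each long gap, only the left-most (short) gap and the right-most (long) gap, splitting the mass $\tfrac12$--$\tfrac12$ between them, while short gaps carry their mass forward unchanged. Proposition \ref{aiuto2} --- which is exactly where Theorem \ref{sw1}(2) enters, via $\tau_{n+1}\geq K$ for the first gap and a Koebe-type argument for the last --- shows that these two selected children are comparable to their parent. Hence on the resulting thinner Cantor set $K\subset\Omega$ one gets $\lambda_1^{n}\leq|I_i^{n}|\leq\lambda_2^{n}$ together with $\mu(I_i^{n})\leq(1/2)^{n/2}$, and therefore $\mu(I)\leq C|I|^{\alpha}$ with $\alpha=\log_{\lambda_1}(1/\sqrt{2})>0$ for an arbitrary interval $I$, since such an $I$ is covered by at most two elements of $\mathscr G_{n-1}$ plus a $\mu$-null set. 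This discards most of $\Omega$, but a lower bound for $K$ suffices. Without some such pruning (or a genuinely different way of allotting mass to the unboundedly many intermediate gaps), your construction does not close.
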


\subsection{Proof of Theorem \ref{mare}}

Theorem \ref{mare} is a generalization of Theorem 1.5 in \cite{my} for functions in $\L$ with any rotation number. The proof is now much more simplified.
\paragraph{Standing assumptions.} In this section we always deal with functions in $\L$ with critical exponent $\ell\geq 3$. Moreover we will often use the structure of the dynamical partitions $\mathscr{P}_{n}$ of the circle explained in Subsection \ref{dynamicalpartition}. Because of the symmetry of the functions in $\L$ we will always assume that $n \in \N$ is even. The case $n \in \N$ being odd is completely analogous.

\begin{proposition}\label{aiuto2}
Every long gap of $\mathscr P_{n-1}$ is comparable with the first and the last gap of $\mathscr P_{n}$ which appear in its subdivision.
\end{proposition}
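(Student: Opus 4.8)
The plan is to follow the same distortion-theoretic strategy used throughout Section 3, pulling the relevant configuration back to a controlled scale near $\underline 0$ and applying Proposition \ref{figo1} together with the Koebe-type Proposition \ref{Koebe liv}. Let $I_i^{n-1}$ be a long gap of $\mathscr P_{n-1}$. By formula (\ref{div}) its subdivision in $\mathscr P_n$ consists of $a_{n+1}$ preimages of $U$, $a_{n+1}$ long gaps of $\mathscr P_n$, and one short gap, arranged in a definite order; the first and the last gaps of $\mathscr P_n$ appearing inside $I_i^{n-1}$ are, up to the boundary preimages of $U$ (which by Proposition \ref{figo1} are comparable to their neighbours anyway), the extreme long gaps of this chain. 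It suffices to treat $i$ in a normalized position: write $I_i^{n-1}=f^{-i}(I_0^{n-1})$ and note that $f^i$ restricted to a slightly enlarged interval $T\supset I_0^{n-1}$ whose endpoints are endpoints of elements of $\mathscr P_{n-1}$ is a diffeomorphism with the summability and bounded-geometry hypotheses of Proposition \ref{Koebe liv} satisfied (the enlargement is the adjacent preimage of $U$, which by Proposition \ref{figo1} controls the space on both sides). Hence the ratio of the lengths of any two subintervals of $I_i^{n-1}$ is comparable to the ratio of the corresponding subintervals of $I_0^{n-1}$, and we are reduced to the base case $i=0$.

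For the base case one examines the configuration near $\underline 0$ directly. When $n$ is even, $I_0^{n-1}$ is the interval between $\underline{-q_{n-1}}$ and $\underline 0$ (or the appropriate interval from (\ref{div}) at level $n-1$), its first subgap of $\mathscr P_n$ is the one adjacent to one endpoint and its last subgap is the one adjacent to $\underline 0$; by Proposition \ref{figo1} each of these long gaps is comparable to the neighbouring preimage of $U$ from the partition $\mathscr P_n$, i.e. to $|\underline{-q_{n}}|$ or $|\underline{-q_{n}-q_{n+1}}|$ up to uniform constants, and these preimages of $U$ are in turn comparable to each other by the same proposition applied along the chain. Thus both the first and the last $\mathscr P_n$-gap inside $I_0^{n-1}$ are comparable to a single reference length, and therefore to $|I_0^{n-1}|$ itself, since $|I_0^{n-1}|$ is squeezed between that reference length and a uniformly bounded multiple of it (again Proposition \ref{figo1}, which bounds the ratio of a preimage of $U$ to an adjacent gap from below, while the number of pieces is irrelevant to comparability of the extreme long gaps with the nearest preimage of $U$).

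The main obstacle is that the subdivision (\ref{div}) contains $a_{n+1}$ long gaps and the parameter $a_{n+1}$ is unbounded, so one cannot simply chain finitely many comparisons across the whole of $I_i^{n-1}$; the comparability must be obtained \emph{locally} at each of the two ends, using only that the extreme long gap sits next to a preimage of $U$ and that this preimage of $U$ sits next to the endpoint of $I_i^{n-1}$. Concretely, one should apply Proposition \ref{figo1} to the outermost preimage of $U$ in the chain and to its two adjacent gaps — one of which is the extreme long gap of $\mathscr P_n$, the other being either a gap of $\mathscr P_{n-1}$ adjacent to $I_i^{n-1}$ or (for the short-gap end) controlled by Lemma \ref{altro} / Proposition \ref{figo1} — and then observe that $I_i^{n-1}$ contains this preimage of $U$ and is contained in the union of $I_i^{n-1}$ with its neighbour, whose ratio is again bounded. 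Handling the two ends is slightly asymmetric because one end of $I_0^{n-1}$ abuts $\underline 0$ while the other abuts $\underline{-q_{n-1}}$, but in both cases the argument is the one just described, with the degenerate small values $a_{n+1}\in\{1,2\}$ treated separately exactly as in Proposition \ref{n7} and Proposition \ref{n1}.
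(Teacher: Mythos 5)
There is a genuine gap, and it sits at the heart of your argument. Proposition \ref{figo1} is a \emph{one-sided} estimate: it says $|J|/|I|\geq C$ for a preimage $J$ of $U$ and an adjacent gap $I$, i.e.\ the gap is at most a bounded multiple of the neighbouring preimage of $U$. It does not say the gap is bounded \emph{below} by a constant times that preimage, so it does not give the "comparability of each extreme long gap with the neighbouring preimage of $U$" that your base case rests on, nor the chaining of preimages of $U$ to a single reference length. More fundamentally, the first gap of the subdivision of $I_0^{n-1}=(\underline 0,\underline{-q_{n-1}})$ is the interval $(\underline 0,\underline{-q_{n+1}})$ adjacent to the flat interval itself, and its ratio to the whole long gap is exactly the scaling $\tau_{n+1}$. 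Saying this ratio is bounded below \emph{is} the second claim of Theorem \ref{sw1}, the main theorem of the paper, which holds only under the standing assumption $\ell\geq 3$ and is false for $\ell\leq 2$ (where $\tau_n\to 0$ exponentially). Your proposed proof never invokes Theorem \ref{sw1} and uses only combinatorics of the partition plus Proposition \ref{figo1} and Koebe; if such an argument worked it would prove the proposition for every critical exponent $\ell>1$, contradicting part (1) of Theorem \ref{sw1}. So the "local" argument at the end abutting $\underline 0$ cannot be repaired without importing the main theorem.

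For comparison, the paper's proof is short precisely because it leans on the main theorem at both ends: for the first gap it simply identifies the ratio with $\tau_{n+1}$ and quotes Theorem \ref{sw1}(2); for the last gap $(\underline{-q_{n-1}-q_n},\underline{-q_{n-1}})$ it uses Fact \ref{F1} and Proposition \ref{Koebe liv} (with $T=[\underline{-q_{n-2}-q_{n-1}+1},\underline{-q_{n-1}+1}]$ and $f^{q_{n-1}-1}$) to transport the ratio to $\alpha_n=|(\underline{-q_n},\underline 0)|/|[\underline{-q_n},\underline 0)|$, whose lower bound again comes from the analysis behind Theorem \ref{sw1}(2) (via Propositions \ref{n1} and \ref{n7}), not from Proposition \ref{figo1} alone. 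Your reduction of a general $I_i^{n-1}$ to the case $i=0$ by a Koebe argument is in the right spirit and matches what the paper does implicitly, but the essential content of the proposition is the bounded-geometry statement itself, which must be cited rather than rederived from the one-sided gap estimates.
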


\begin{proof}
Let $I_{0}^n$ and $I_{1}^n$ be the first and the last gap of the subdivision of a long gap $I^{n-1}$ of the partition $\mathscr P_{n-1}$.
Without loss of generality we may assume that
\begin{eqnarray*}
I^{n-1}=(\underline{0}, \underline{-q_{n-1}}),
\end{eqnarray*}
\begin{eqnarray*}
I_{0}^n=(\underline{0}, \underline{-q_{n+1}}),
\end{eqnarray*}
and
\begin{eqnarray*}
I_{1}^n=(\underline{-q_{n-1}-q_n}, \underline{-q_{n-1}}).
\end{eqnarray*}

For the first gap $I_{0}^n$, the proof is easy and comes directly from the second claim of Theorem \ref{sw1}, in fact
\begin{eqnarray*}
 \frac{|(\underline{0},\underline{-q_{n+1}})|}{|(\underline{0}, \underline{-q_{n-1}})|}=\tau_{n+1}.
\end{eqnarray*}

For $I_{1}^n$, we apply Fact \ref{F1} and we find a constant $C_1>0$ such that
\begin{eqnarray*}
\frac{|I_{1}^n|}{|I^{n-1}|}&\geq& C_1 \frac{|(\underline{-q_{n-1}-q_n+1}, \underline{-q_{n-1}+1})|}{|(\underline{1}, \underline{-q_{n-1}+1})|}
\end{eqnarray*}

Apply now Proposition \ref{Koebe liv} to
\begin{itemize}
\item[-]$T=[\underline{-q_{n-2}-q_{n-1}+1}, \underline{-q_{n-1}+1}]$,
\item[-]$J=(\underline{-q_{n-2}-q_{n-1}+1}, \underline{-q_{n-1}+1})$,
\item[-]$f^{q_{n-1}-1}$.
\end{itemize}
and we get a positive constant $C_2$ such that
\begin{eqnarray*}
\frac{|I_{1}^n|}{|I^{n-1}|}&\geq&  C_1 C_2 \frac{|(\underline{-q_n}, \underline{0})|}{|(\underline{q_{n-1}}, \underline{0})|}.
\end{eqnarray*}

Finally by Proposition \ref{figo1} and by the second claim of Theorem \ref{sw1} we find two constants $C_3>0$ and $C_4>0$ such that
\begin{eqnarray*}
\frac{|I_{1}^n|}{|I^{n-1}|}\geq C_1 C_2 C_3\frac{|(\underline{-q_n}, \underline{0})|}{|[\underline{-q_{n}}, \underline{0})|}\geq C_1 C_2 C_3 C_4.
\end{eqnarray*}

\end{proof}

\paragraph{ Proof of Theorem \ref{mare}.}

\begin{proof}
By Subsection \ref{dynamicalpartition} we know that the dynamical partition $\mathscr P_n$ contains two types of gaps: short and long. We define $\{\mathscr G_{n}\}_{n\geq 1}$ a sequence of subsets of $\mathscr P_{n}$ (i.e. $\mathscr G_{n}\subset \mathscr P_{n}$) using induction. Initialise by putting in $\mathscr P_{1}$ a long gap of $\mathscr G_{1}$. Any short gap of $\mathscr P_{n-1}$ contained in $\mathscr G_{n-1}$ is put into $\mathscr G_{n}$ (now it is a long gap of $\mathscr P_{n}$). For a long gap $I^{n-1}_i \in \mathscr P_{n-1}$ belonging to $\mathscr G_{n-1}$ we recall that it is split into a set of gaps of $\mathscr P_{n}$. The left-most, $I^{n+1}_i$, and right-most, $I^n_{i+q_{n-1}+q_{n}}$, enter $\mathscr G_{n}$. The former is a short gap of $\mathscr P_{n}$ and the latter is a long one.

Following this we construct inductively a sequence of probability measures $\{\mu_n\}_{n\geq1}$ on $(\S,\mathcal A_n)$, where $\mathcal A_n$ is the algebra generated by $\mathcal G_n$. We put $\mu_1(I)=1$ for $I\in \mathscr G_{1}$. For the gaps appearing in division of $I^{n-1}_i$ (which is the long gap of $\mathscr P_{n-1}$) we set
\begin{equation}\label{fi}
\mu_n(I^{n+1}_i) = \mu_n(I_{i+q_{n-1}+q_{n}}^{n})=\frac{\mu_{n-1}(I_{i}^{n-1})}{2}.
\end{equation}
The measure of the short gaps remains unchanged. By the construction $\mu_n$ restricted to $\mathcal A_{n-1}$ coincides with $\mu_{n-1}$, thus, by Carath\'eodory's theorem, there exists a probability measure $\mu$ on $(\S,\sigma(\mathscr A_{1},\mathscr A_{2},..))$ extending $\{\mu_n\}_{n\geq1}$. One easily checks that $\mu$ is supported by $K=\cap_{n\in\N}\cup_{I^{n}_i\in\mathscr G_{n}}I^{n}_i \subset \Omega$. Moreover, we have $\mu\left(I_{i}^{n}\right)\leq\frac{1}{2}^\frac{n}{2}$ for any $I^{n}_i \in \mathscr G_{n}$ and also by Proposition \ref{aiuto2} and Corollary \ref{cor:trouzero}, $\lambda_{1}^{n}\leq\left|I_{i}^{n}\right|\leq\lambda_{2}^{n}$ for two constants $\lambda_1,\lambda_2 \in (0,1)$. Therefore,
\begin{equation}\label{ne1}
\mu\left(I_{i}^{n}\right)\leq\left|I_{i}^{n}\right|^{\alpha}
\end{equation}
with $\alpha=\log_{\lambda_1}\frac{1}{\sqrt{2}}>0$.

Let $I$ be an arbitrary interval and let $I_{i}^{n}$ be an element of $\mathscr G_{n}$ contained in $I$ with $n$ as small as possible. Then $I$ is covered by at most two elements of $\mathscr G_{{n-1}}$, $I_{j}^{n-1}$ and $I_{j'}^{n-1}$ and by a set of $\mu$-measure zero (since it is contained in $\S\setminus K$). By (\ref{fi}), Proposition \ref{aiuto2} and  (\ref{ne1}) the following is true:
\begin{equation}\label{mis}
\mu\left(I\right)\leq\mu\left(I_{j}^{n-1}\right)+\mu\left(I_{j'}^{n-1}\right)\leq C_1\mu\left(I_{j}^{n-1}\right)\leq C_2\mu\left(I_{i}^{n}\right)\leq C_2 \left|I_{i}^{n}\right|^{\alpha}\leq C_2\left|I\right|^{\alpha}.
\end{equation}
Finally, let $\mathscr{K}$ be an $\epsilon$-cover of the set $K$. By inequality (\ref{mis}),
\begin{displaymath}
\sum_{I\in\mathscr K}\left|I\right|^{\alpha}\geq\frac{1}{C_2}\sum_{I\in\mathscr K}\mu\left(I\right)\geq \frac{1}{C_2}\mu\left(K\right)=\frac{1}{C_2}.
\end{displaymath}
Finally, the Hausdorff dimension of $K$, and thus also the one of $\Omega$, is strictly bigger than zero.

\end{proof}

\subsection*{Acknowledgments}
I am very grateful to Prof. J. Graczyk, who introduced me to the topic of the paper and subsequently supported me during  the work with his knowledge and kind encouragement. I would like also to thank Prof. M. Rams for many helpful discussions and suggestions which led to the current version of the proof of Theorem \ref{mare}.
%
%

\section{Appendix}
In the following we prove the first claim of Theorem \ref{sw1}. The proof is a generalization of Theorem A.2 and Lemma A.12 in \cite{my} to the case of functions in $\L$ with any rotation number.
\\

In this section we will always work with the following sequence:
\begin{displaymath}
\alpha_n=\frac{|(\underline{-q_n},\underline{0})|}{|[\underline{-q_n},\underline{0})|}.
\end{displaymath}
Since $\forall n \in \N$, $\alpha_n>\tau_n$, we shall prove the first claim of Theorem \ref{sw1} for the sequence $(\alpha_n)_{n\in \N}$.

\par
Let
\begin{displaymath}
\sigma_n=\frac{|(\underline{0},\underline{q_n})|}{|(\underline{0},\underline{q_{n-1}})|}
\end{displaymath}
and
\begin{displaymath}
s_n=\frac{|[\underline{-q_{n-2}},\underline{0}]|}{|\underline{0}|}.
\end{displaymath}
We have the following Theorem:

\begin{theorem}
There exists a natural number $N \in \N$, such that for $n > N$ we have the following inequality:
\begin{equation}\label{alfa}
(\alpha_n)^\ell\leq\prod_{i=0}^{a_n-1}K_{i,n}C_n\tilde{M_n}(l)\alpha_{n-2}^2
\end{equation}
where for all $i\in\{0,\dots,a_n-1\}$, if we denote:
\[
	\tau_{i,n}=\max_{j \in \{0,\dots,q_{n-1}-2\}}|f^{j}((\underline{iq_{n-1}+1},\underline{-q_{n-1}+1}])|
\]
and
\[
	\rho_n=\max_{j \in \{0,\dots,q_{n-2}-2\}}|f^{j}((\underline{a_nq_{n-1}+1},\underline{1}])|,
\]
then
\begin{equation}\label{K_n}
K_{i,n}=e^{\sigma(\tau_{i,n})\sum_{j=0}^{q_{n-1}-2}|f^{j}(\underline{-q_n+iq_{n-1}+1})|},
\end{equation}
\[
	C_n=e^{\sigma(\rho_n)\sum_{j=0}^{q_{n-2}-2}|f^{j}(\underline{-q_{n-2}+1})|}
\]
and
\[
\tilde{M_n}(l)=s_{n-1}^2\cdot\frac{2}{l}\cdot\left(\frac{1}{1+\sqrt{1-\frac{2(l-1)}{l}C_ns_{n-1}\alpha_{n-1}}}\right)\cdot\frac{1}{1-\alpha_{n-2}}\cdot\frac{\sigma_n}{\sigma_{n-2}}.
\]
\end{theorem}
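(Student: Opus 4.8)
The plan is to move the static ratio $\alpha_n$ to ``time~$1$'', pull it back through the natural tower of $q_{n-1}$-preimages until it becomes a configuration of $\mathscr P_{n-2}$ sitting next to $\underline 0$, collecting along the way the distortion factors produced by Theorem~\ref{point}, and then to close the estimate with an implicit (quadratic) inequality coming from the $\ell$-th power form of $f$ at the endpoints of $U$; that last step is exactly what produces $\tilde M_n(\ell)$.

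I would begin as in the proof of Proposition~\ref{n1}: since in our coordinates $f$ equals $x\mapsto(x-b)^{\ell}$ near $b$ (and $x\mapsto(a-x)^{\ell}$ near $a$), applying $f$ once to the two intervals defining $\alpha_n$ gives, for $n$ large,
\[
(\alpha_n)^{\ell}=\frac{|(\underline{-q_n+1},\underline{1})|}{|[\underline{-q_n+1},\underline{1})|}.
\]
Using $q_n=a_nq_{n-1}+q_{n-2}$ I would then insert the intermediate preimages $\underline{-q_{n-2}-iq_{n-1}+1}$ for $i=0,\dots,a_n$ and transport the above ratio forward, block by block, by $f^{q_n-1}$ down to a configuration around $\underline 0$ at scale $n-2$. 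On the $i$-th block the map $f^{q_{n-1}-1}$ is a diffeomorphism on the relevant interval (its first $q_{n-1}-1$ images miss $\underline 0$), so Theorem~\ref{point} applied to the corresponding quadruple bounds the loss of the Poincar\'e cross-ratio $\Poin$ by precisely the factor $K_{i,n}$ of (\ref{K_n}), with $\tau_{i,n}$ the maximal length of the images of the outer interval as in the statement; the residual $q_{n-2}-1$ iterates, treated the same way with $f^{q_{n-2}-1}$, produce $C_n$ with parameter $\rho_n$. One checks, using Corollary~\ref{cor:trouzero} and the bounded multiplicity of these towers, that $\prod_i K_{i,n}$ and $C_n$ stay bounded, which is what makes the resulting recursion useful.

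It then remains to read off the level-$(n-2)$ configuration. Up to the bounded distortion just collected, the transported ratio factors into elementary ratios around $\underline 0$: two of them are controlled by the scaling $\alpha_{n-2}$ (one coming from each side of $\underline 0$ in the $\mathscr P_{n-2}$-picture), which is the origin of $\alpha_{n-2}^{2}$; the gap-to-interval conversions at $\underline 0$ produce $s_{n-1}^{2}$ together with the quotient $\sigma_n/\sigma_{n-2}$; and the genuinely nonlinear factor comes from comparing $f$ on three nested intervals sharing the boundary point of $U$, where the Taylor expansion $(1+t)^{\ell}=1+\ell t+\tfrac{\ell(\ell-1)}{2}t^{2}+\cdots$ forces, for the remaining ratio $t$, an inequality of the shape
\[
\tfrac{\ell(\ell-1)}{2}\,C_ns_{n-1}\alpha_{n-1}\,t^{2}-\ell t+1\le 0 .
\]
Its admissible (smaller) root is exactly $\tfrac{2}{\ell}\big(1+\sqrt{1-\tfrac{2(\ell-1)}{\ell}C_ns_{n-1}\alpha_{n-1}}\big)^{-1}$, the middle factor of $\tilde M_n(\ell)$, while the leftover $\tfrac{1}{1-\alpha_{n-2}}$ comes from replacing $|(\underline{-q_{n-2}},\underline 0)|$ by $|[\underline{-q_{n-2}},\underline 0)|$ inside that comparison. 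The discriminant $1-\tfrac{2(\ell-1)}{\ell}C_ns_{n-1}\alpha_{n-1}$ is nonnegative as soon as $C_ns_{n-1}\alpha_{n-1}$ is small, which holds for every $n>N$ because $s_{n-1}\to 0$ by Corollary~\ref{cor:trouzero} while $C_n$ and $\alpha_{n-1}$ stay bounded; this is precisely the threshold $N$ in the statement. Multiplying the factors from the two stages yields (\ref{alfa}).

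The main obstacle is this last stage: one must choose the nested configuration around $\underline 0$ so that the second-order expansion of $f$ closes into a genuinely solvable quadratic for $t$ — rather than an estimate still involving $\alpha_n$ itself — and at the same time keep exact track of every distortion sum so that they aggregate into $\prod_i K_{i,n}$ and $C_n$ with precisely the summation ranges of the statement. The remainder is a long but routine manipulation inside $\mathscr P_{n-2}$, using repeatedly Fact~\ref{F1}, Proposition~\ref{figo1}, Proposition~\ref{Koebe liv} and the distortion properties of $\Cr$ and $\Poin$.
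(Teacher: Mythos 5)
Your high-level strategy is the right one, and it matches what the paper actually does here (the paper gives no argument of its own: it simply invokes Theorem A.2 of \cite{my}, observing that that proof never uses the rotation number). Pushing $\alpha_n$ through one application of $f$ to get $(\alpha_n)^{\ell}$, transporting the resulting configuration down the dynamical partition with Theorem~\ref{point} so that the distortion losses aggregate exactly into $\prod_i K_{i,n}$ and $C_n$, and closing with an implicit quadratic inequality that produces the middle factor of $\tilde M_n(\ell)$ --- this is indeed the architecture of that proof, and your identification of the middle factor as a root of a quadratic with discriminant $1-\tfrac{2(\ell-1)}{\ell}C_ns_{n-1}\alpha_{n-1}$ is algebraically consistent with the stated formula.

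However, there are genuine gaps. First, the step that actually produces the theorem --- the derivation of the factor $\alpha_{n-2}^{2}$ and of the closed quadratic in the ``remaining ratio'' $t$ --- is not carried out; you yourself flag it as ``the main obstacle,'' so the proposal describes the shape of the proof rather than giving one. Second, where you do commit to a formula, the inequality points the wrong way: from
\begin{equation*}
\tfrac{\ell(\ell-1)}{2}\,C_ns_{n-1}\alpha_{n-1}\,t^{2}-\ell t+1\le 0
\end{equation*}
one concludes that $t$ lies \emph{between} the two roots, i.e.\ $t\ge\tfrac{2}{\ell}\bigl(1+\sqrt{1-\tfrac{2(\ell-1)}{\ell}C_ns_{n-1}\alpha_{n-1}}\bigr)^{-1}$, which is a lower bound and cannot feed into the upper bound (\ref{alfa}); the usable statement is the reverse inequality $\ge 0$ together with an a priori argument that $t$ sits below the larger root, so that $t$ is bounded above by the smaller one. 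Third, your justification of the nonnegativity of the discriminant is incorrect: $s_{n-1}=|[\underline{-q_{n-3}},\underline{0}]|/|\underline{0}|\ge 1$ by definition (the numerator contains the flat interval itself) and tends to $1$, not to $0$; Corollary~\ref{cor:trouzero} controls the gaps, not $s_n$. The smallness needed for the square root to be real has to come from $\tfrac{2(\ell-1)}{\ell}\le 1$ when $\ell\le 2$ together with $\alpha_{n-1}<1$ and $C_n\to 1$, not from a decay of $s_{n-1}$.
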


\begin{proof}
The proof is exactly the same of Theorem A.2 in \cite{my} (pag. 150). In fact, the author doesn't use any assumption on the rotation number.
\end{proof}

We prove now the convergence of the sequence $(\alpha_n)_{n \in \N}$.
\par
In \cite{my} (see Lemma A.12 and continuation, pag. 153-154), without any assumption on the rotation number, the author proves that $\prod_{m=0}^{n}C_m$ converges and  that $\prod_{m=0}^{n}\tilde{M_m}$ tends to zero. It remains to study the convergence of the product
$$\prod_{m=0}^{n}\prod_{i=0}^{a_m-1}K_{i,m}$$

which is assured by the following Lemma:

\begin{lemma}\label{esp}
There exists $0<\lambda<1$ such that, for all $i\in\{0,\dots,a_n-1\}$ and for $m$ big enough,
$$\prod_{i=0}^{a_m-1}K_{i,m}\leq e^{\sigma(\lambda^{m-3})\lambda^{m-2}}.$$
\end{lemma}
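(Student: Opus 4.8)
The quantity to control is
\[
\prod_{i=0}^{a_m-1}K_{i,m}=\exp\Bigl\{\sum_{i=0}^{a_m-1}\sigma(\tau_{i,m})\sum_{j=0}^{q_{m-1}-2}\bigl|f^{j}(\underline{-q_m+iq_{m-1}+1})\bigr|\Bigr\},
\]
so the whole task reduces to two metric estimates: (i) a uniform geometric decay of the scales $\tau_{i,m}$ as $m\to\infty$, which by $\sigma(t)\to 0$ as $t\to0$ will force $\sigma(\tau_{i,m})$ to be exponentially small, and (ii) a bound showing that the double sum $\sum_{i=0}^{a_m-1}\sum_{j=0}^{q_{m-1}-2}|f^{j}(\underline{-q_m+iq_{m-1}+1})|$ is at most geometrically large (in fact, I expect it to be bounded, or to grow no faster than a fixed power of a constant). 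Combining (i) and (ii) gives an exponent of the form $\sigma(\lambda^{m-3})\cdot\lambda^{m-2}$ for a single $\lambda\in(0,1)$, which is exactly the claimed inequality once $m$ is large enough that the crude constants are absorbed.

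For step (ii) I would argue as follows. For fixed $m$ and $i$, the intervals $f^{j}(\underline{-q_m+iq_{m-1}+1})$ for $j=0,\dots,q_{m-1}-2$ are the first $q_{m-1}-1$ forward iterates of a single preimage of $U$; these are pairwise disjoint (a standard consequence of the combinatorics of the dynamical partition $\mathscr P_{m-1}$, since they form part of an orbit making its first close return), so $\sum_{j=0}^{q_{m-1}-2}|f^{j}(\underline{-q_m+iq_{m-1}+1})|\le 1$. Summing over the $a_m$ values of $i$ gives the bound $a_m$. If $a_m$ is bounded this already suffices; to handle unbounded $a_m$ one uses that the $a_m$ intervals $\underline{-q_m+iq_{m-1}+1}$, $i=0,\dots,a_m-1$, together with their forward orbits of length $q_{m-1}-1$, have total multiplicity bounded by a universal $k$ (each point of the circle lies in at most $k$ of them, by the structure of $\mathscr P_{m-1}$ and $\mathscr P_m$), hence the whole double sum is at most $k$. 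This is the kind of bounded-multiplicity input that Theorem~\ref{point} and the $\Cr$-distortion lemma are tailored for.

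Step (i) is where the real work lies, and I expect it to be the main obstacle. I would show that there is $\lambda\in(0,1)$ with $\tau_{i,m}\le C\lambda^{m}$ uniformly in $i\in\{0,\dots,a_m-1\}$. Each $\tau_{i,m}$ is the maximal length among $f^{j}((\underline{iq_{m-1}+1},\underline{-q_{m-1}+1}])$ for $j\le q_{m-1}-2$; pulling back by one iterate, the relevant interval $(\underline{iq_{m-1}},\underline{-q_{m-1}}]$ sits inside a long gap of $\mathscr P_{m-2}$ (or a controlled union of elements of $\mathscr P_{m-1}$), whose length tends to zero at least exponentially fast by Corollary~\ref{cor:trouzero}. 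The forward iterates $f^{j}$ of such an interval, for $j$ up to $q_{m-1}-2$, are contained in elements of $\mathscr P_{m-1}$ of comparable depth (here one uses the bounded-distortion Proposition~\ref{Koebe liv} and Fact~\ref{F1} to transport the smallness, together with the disjointness that keeps the distortion sums finite), so their lengths are again bounded by the maximal gap length of a partition of order $\sim m$, hence by $C\lambda^{m}$. The delicate point, as the author warns, is the accumulation of distortion constants along orbits of length $q_{m-1}$: one must check that the sums $\sum_j|f^{j}(\cdot)|$ entering the Koebe and cross-ratio estimates stay uniformly bounded, which is precisely the disjointness/bounded-multiplicity fact used in step (ii). Once $\tau_{i,m}\le C\lambda^{m}$ is established, shrinking $\lambda$ slightly to swallow $C$ and the shift in index gives $\sigma(\tau_{i,m})\le\sigma(\lambda^{m-3})$, and multiplying by the bound from step (ii) yields
\[
\prod_{i=0}^{a_m-1}K_{i,m}\le \exp\bigl\{\sigma(\lambda^{m-3})\,\lambda^{m-2}\bigr\}
\]
for $m$ large, completing the proof. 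The convergence of $\prod_m\prod_i K_{i,m}$ is then immediate since $\sum_m \sigma(\lambda^{m-3})\lambda^{m-2}<\infty$.
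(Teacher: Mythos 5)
Your step (i) lands on essentially the right mechanism, if by a more roundabout route than necessary: the paper does not transport smallness by Koebe or cross-ratio estimates at all, but simply observes that each interval $f^{j}((\underline{iq_{m-1}+1},\underline{-q_{m-1}+1}])$ is contained in a gap of the partition $\mathscr P_{m-3}$, so Corollary~\ref{cor:trouzero} gives $\tau_{i,m}\leq\lambda^{m-3}$ directly, with no distortion constants to control. The worry you raise about accumulation of constants along orbits of length $q_{m-1}$ therefore does not arise in this lemma.

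Step (ii), however, contains a genuine gap. You bound the double sum $\sum_{i}\sum_{j}|f^{j}(\underline{-q_m+iq_{m-1}+1})|$ by a universal constant $k$ via disjointness and bounded multiplicity, but then write the conclusion with $\lambda^{m-2}$ in the exponent; a constant bound does not yield that. More importantly, a constant bound is not merely cosmetically weaker: it is insufficient for the purpose the lemma serves. The function $\sigma$ from Theorem~\ref{point} is only known to be bounded, increasing, and to tend to $0$ at $0$ --- it could decay as slowly as $1/\log(1/t)$ --- so $\sum_m\sigma(\lambda^{m-3})\cdot k$ need not converge, and the infinite product $\prod_m\prod_i K_{i,m}$ would not be controlled. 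The missing idea is that the intervals $f^{j}(\underline{-q_m+iq_{m-1}+1})$, taken over \emph{all} $i\in\{0,\dots,a_m-1\}$ and $j\in\{0,\dots,q_{m-1}-2\}$, are pairwise disjoint and their union is contained in a \emph{single gap of} $\mathscr P_{m-2}$; this is a combinatorial fact about the nesting of the dynamical partitions (the relevant preimages $\underline{-k}$, $q_{m-2}<k<q_m$, all lie in one gap of $\mathscr P_{m-2}$). Combined with Corollary~\ref{cor:trouzero} this gives the total sum $\leq\lambda^{m-2}$, which is exponentially small rather than just bounded, and only then does the product of the two estimates produce the summable exponent $\sigma(\lambda^{m-3})\lambda^{m-2}$ claimed in the lemma.
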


\begin{proof}
By the order of the preimages of the flat interval on the circle (see Subsection \ref{dynamicalpartition}) and the definition of $K_{i,m}$ (see \ref{K_n} ) we can observe that:

\begin{enumerate}
\item each interval $f^{j}((\underline{iq_{m-1}+1},\underline{-q_{m-1}+1}])$ is contained in a gap of the partition $\mathscr{P}_{m-3}$,
\item for all $i$, $\sum_{j=0}^{q_{m-1}-2} | f^{j}(\underline{-q_m+iq_{m-1}+1}) |$ is contained in a gap of $\mathscr P_{m-1}$ and each of the two sums  $\sum_{j=0}^{q_{m-1}-2} | f^{j}(\underline{-q_m+iq_{m-1}+1}) |$\\ and  $\sum_{j=0}^{q_{m-1}-2} | f^{j}(\underline{-q_m+i'q_{m-1}+1}) |$ is disjoint.  Moreover the total sum\\
$\sum_{i=0}^{a_m-1}\sum_{j=0}^{q_{m-1}-2}|f^{j}(\underline{-q_m+iq_{m-1}+1})|$ is contained in a gap of the partition $\mathscr{P}_{m-2}$.

\end{enumerate}
Since the lengths of the gaps of the partition $\mathscr P_m$ tend to zero at least exponentially fast by Corollary \ref{cor:trouzero}, then there exists $0<\lambda<1$ such that, for $m$ big enough,

\begin{displaymath}
	\sigma\left(\max_{j \in \{0,\dots,q_{m-1}-2 \}}|f^{j}((\underline{iq_{m-1}+1},\underline{-q_{m-1}+1}])|\right)<\sigma(\lambda^{m-3})
\end{displaymath}

and

\begin{displaymath}
\sum_{i=0}^{a_m-1}\sum_{j=0}^{q_{m-1}-2}|f^{j}(\underline{-q_m+iq_{m-1}+1})|<\lambda^{m-2}.
\end{displaymath}
Finally $\prod_{i=0}^{a_m-1}K_{i,m}$ is equal to
\begin{multline*}
	 \exp\left(\sum_{i=0}^{a_m-1}\sigma\left(\max_{j\in\{0,\dots,q_{m-1}-2\}}|f^{j}((\underline{iq_{m-1}+1},\underline{-q_{m-1}+1}])|\right)\right.\\\left.\sum_{j=0}^{q_{m-1}-2}|f^{j}(\underline{-q_m+iq_{m-1}+1})|\right)	 
\end{multline*}
%
which is strictly less than
\begin{eqnarray*}
 \exp \left( \sigma(\lambda^{m-3})\lambda^{m-2} \right).
\end{eqnarray*}
\end{proof}

In conclusion, for $\ell\leq 2$ the inequality \eqref{alfa} implies that the sequence $(\alpha_n)_{n \in \N}$ tends to zero at least exponentially fast. Since $\alpha_n>\tau_n$, we have the same result for the sequence $(\tau_n)_{n\in \N}$.


\begin{thebibliography}{99} 
\bibitem{ABZ} S. Kh. Aranson, G. R. Belitskii, E.V. Zhuzhoma: \emph{Introduction to the Qualitative Theory of Dynamical Systems on Surfaces}. American Mathematical Society, (1996)
\bibitem{Cherry} T. Cherry: \emph{Analytic quasi-periodic discontinuous type on a torus}. Proc. Lond. Math. Soc. \textbf{44}, 175-215 (1938)


\bibitem{deMvS} W. de Melo and S. van Strien: \emph{One-dimensional dynamics}, volume 25 of Ergebnisse der Mathematik und ihrer Grenzgebiete (3). Springer-Verlag, Berlin, (1993)

\bibitem{Grfun} J. Graczyk: \emph{Dynamics of Circle Maps with Flat Spots}.
Fundamenta Mathematicae \textbf{209}  (2010)

\bibitem{5aut} J. Graczyk, L. B. Jonker, G. \'Swi\c atek, F. M. Tangerman and J. J. P. Veerman: \emph{Differentiable Circle Maps with a Flat Interval}. Comm. Math. Phys. \textbf{173}, 599-622 (1995)

\bibitem{GS-Schwarz} J. Graczyk, D. Sands and G. \'Swi\c atek: \emph{Metric Attractors for Smooth Unimodal Maps}. Annals of Mathematics \textbf{159}, 725-740 (2004)


\bibitem{MvSdMM} M. Martens, S. Van Strien, W. De Melo and P. Mendes: \emph{On Cherry flows}. Ergodic Theory and Dynamical Systems \textbf{10}, 531-554 (1990)

\bibitem{mendes} P. Mendes: \emph{A metric property of Cherry vector fields on the torus}. Journal of Differential Equations \textbf{89}, 305-316 (1990)

\bibitem{Morgan} P. C. Moreira and A. A. Gaspar: \emph{Metric Properties of Cherry Flows}. Journal of Differential Equations \textbf{97}, 16-26 (1992)


\bibitem{my} L. Palmisano: \emph{A Phase Transition for Circle Maps and Cherry Flows.} Comm. Math. Phys. \textbf{321}(1), 135-155 (2013)

\bibitem{my3} L. Palmisano: \emph{On Physical Measures for Cherry Flows.} Preprint

\bibitem{my4} L. Palmisano: \emph{Sur les applications du cercle avec un intervalle plat et flots de Cherry.} PhD thesis, Universit\'e Paris-Sud XI (2013)







\bibitem{SV} R. Saghin, and E. Vargas. \emph{Invariant measures for Cherry flows}. Comm. Math. Phys. \textbf{317}(1), 55-67 (2013)

\bibitem{Swiatek} G. \'Swi\c atek: \emph{Rational rotation numbers for maps of the circle}.Comm. Math. Phys. \textbf{119}(1), 109-128 (1988)

\bibitem{SvS} S. van Strien. \emph{Hyperbolicity and invariant measures for general $\Cd$ interval maps satisfying the Misiurewicz condition}. Comm. Math. Phys. \textbf{128}(3), 437-495 (1990)

\bibitem{veer} J. J. P. Veermann: \emph{Irrational rotation number}. Nonlinearity \textbf{2}, 419-428 (1989)

\bibitem{VT1} J. J. P. Veerman and F. M. Tangerman:\emph{ Scalings in circle maps. I}. Comm. Math. Phys. \textbf{134}(1), 89-107 (1990)

\bibitem{VT2} J. J. P. Veerman and F. M. Tangerman:\emph{ Scalings in circle maps. II}. Comm. Math. Phys. \textbf{141}(2), 279-291 (1991)
\end{thebibliography}
\end{document}